\documentclass[a4paper,11pt,leqno]{article}

\usepackage[latin1]{inputenc}
\usepackage[T1]{fontenc} 
\usepackage[english]{babel}
\usepackage{verbatim}
\usepackage{calligra}
\usepackage{enumerate}
\usepackage{dsfont}
\usepackage{amsfonts}
\usepackage{amsmath}
\usepackage{amsthm}
\usepackage{amssymb}
\usepackage{mathtools}
\usepackage{mathrsfs}
\usepackage{color}
\usepackage{graphicx}
\usepackage{bm}
\usepackage{srcltx}

\usepackage{graphicx}		
\usepackage[hypcap=false]{caption}

\selectlanguage{english}



\theoremstyle{definition}
\newtheorem{defi}{Definition}[section]

\theoremstyle{plane}
\newtheorem{thm}[defi]{Theorem}
\newtheorem{prop}[defi]{Proposition}
\newtheorem{cor}[defi]{Corollary}
\newtheorem{lemma}[defi]{Lemma}

\newcommand{\tbf}{\textbf}

\newcommand{\tsl}{\textsl}

\newcommand{\mbb}{\mathbb}

\newcommand{\mc}{\mathcal}

\newcommand{\veps}{\varepsilon}

\newcommand{\what}{\widehat}
\newcommand{\wtilde}{\widetilde}
\newcommand{\vphi}{\varphi}
\newcommand{\vrho}{\varrho}

\newcommand{\g}{\gamma}

\newcommand{\s}{\sigma}
\renewcommand{\t}{\tau}
\newcommand{\z}{\zeta}

\newcommand{\de}{\delta}
\renewcommand{\o}{\omega}

\newcommand{\R}{\mathbb{R}}

\newcommand{\N}{\mathbb{N}}
\newcommand{\Z}{\mathbb{Z}}

\newcommand{\E}{\mbb{E}}
\renewcommand{\H}{\mbb{H}}

\renewcommand{\div}{{\rm div}\,}

\newcommand{\curl}{{\rm curl}\,}

\newcommand{\Id}{{\rm Id}\,}
\newcommand{\Supp}{{\rm Supp}\,}
\newcommand{\eff}{{\rm eff}}

\newcommand{\dx}{ \, {\rm d} x}
\newcommand{\dt}{ \, {\rm d} t}

\newcommand{\dd}{\,{\rm d}}

\allowdisplaybreaks

\def\d{\partial}
\def\div{{\rm div}\,}

\textheight = 700pt
\textwidth = 460pt
\voffset = -70pt
\hoffset = -40pt


\begin{document}

\title{\textsc{\Large{\textbf{Effective velocity and $L^\infty$-based well-posedness for incompressible fluids with odd viscosity
}}}}

\author{
\normalsize\textsl{Francesco Fanelli}$\,^{1a,b,c}\qquad$ and $\qquad$
\textsl{Alexis F. Vasseur}$\,^{2}$ \vspace{.5cm} \\
\footnotesize{$\,^{1a}\;$ \textsc{BCAM -- Basque Center for Applied Mathematics}} \\ 
{\footnotesize Alameda de Mazarredo 14, E-48009 Bilbao, Basque Country, SPAIN} \vspace{.2cm} \\
\footnotesize{$\,^{1b}\;$ \textsc{Ikerbasque -- Basque Foundation for Science}} \\  
{\footnotesize Plaza Euskadi 5, E-48009 Bilbao, Basque Country, SPAIN} \vspace{.2cm} \\
\footnotesize{$\,^{1c}\;$ \textsc{Universit\'e Claude Bernard Lyon 1}, {\it Institut Camille Jordan -- UMR 5208}} \\ 
{\footnotesize 43 blvd. du 11 novembre 1918, F-69622 Villeurbanne cedex, FRANCE} \vspace{.2cm} \\
\footnotesize{$\,^{2}\;$ \textsc{University of Texas at Austin}}, 
{\footnotesize \it Department of Mathematics} \\ 
{\footnotesize 2515 Speedway Stop C1200, Austin, TX 78712, USA} \vspace{.3cm} \\
\footnotesize{Email addresses: $\,^{1}\;$\ttfamily{ffanelli@bcamath.org}}, $\;$
\footnotesize{$\,^{2}\;$\ttfamily{vasseur@math.utexas.edu}}
\vspace{.2cm}
}

\date\today

\maketitle

\subsubsection*{Abstract}
{\footnotesize 
The present paper is concerned with the well-posedness theory for non-homogeneous incompressible fluids exhibiting odd (non-dissipative) viscosity effects.
Differently from previous works, we consider here the full odd viscosity tensor.
Similarly to the work of Bresch and Desjardins in compressible fluid mechanics,
we identify the presence of an effective velocity in the system, linking the velocity field
of the fluid and the gradient of a suitable function of the density. By use of this effective velocity, we propose a new formulation of the original system of
equations, thus highlighting a strong similarity with the equations of the ideal magnetohydrodynamics.
By taking advantage of the new formulation of the equations, 
we establish a local in time well-posedness theory in Besov spaces based on $L^\infty$
and prove a lower bound for the lifespan of the solutions implying ``asymptotically global'' existence: in the regime of small
initial density variations, $\rho_0-1= O(\veps)$ for small $\veps>0$, the corresponding solution is defined up to some time $T_\veps>0$ satisfying
the property $T_\veps\,\longrightarrow\,+\infty$ when $\veps\to0^+$.

}

\paragraph*{\small 2020 Mathematics Subject Classification:}{\footnotesize
35Q35 
\ (primary);
\ 35B65, 
\ 76B03, 
\ 76D09 
\ (secondary).}

\paragraph*{\small Keywords: }{\footnotesize incompressible fluids; odd viscosity; variable density; Els\"asser formulation; endpoint Besov spaces;
improved lifespan.}


\section{Introduction} \label{s:intro}

It is a common opinion that viscosity in fluids is a mechanism which dissipates energy. However, real life phenomena offer a rich variety of examples associated
to fluid mechanics where \emph{non-dissipative} viscosity effects appear, owing to time-reversal symmetry or parity breakings
\cite{ganeshan2017odd}, \cite{K-S-F-V}.
Typical examples arise in the context of quantum fluids (for instance, electron fluids, magnetised plasmas, chiral superfluids, quantum Hall fluids\ldots), but
there are examples also from classical hydrodynamics, like \tsl{e.g.} polyatomic gases, chiral active matter and vortex fluids.
As a consequence of the parity violation, the viscous stress tensor related to the fluid presents a skew-symmetric component, often dubbed
\emph{odd viscosity} or \emph{Hall viscosity}.

Even though examples of fluids having odd viscosity have been known for a long time in the context of superfluids and plasma physics \cite{Land-Lif_Fluids},
the subject gained renewed interest after the seminal work \cite{Avron} by Avron, who discovered symmetry violations in two-dimensional quantum Hall fluids (see also
\cite{avron1995viscosity}). The case of planar flows is particularly interesting, because in this circumstance
the parity breaking is compatible with the fluid isotropy.
Work \cite{Avron} opened the road to investigations seeking for parity breaking phenomena in other fluid systems. We refer to the introduction of \cite{F-GB-S}
for more details about this and for an overview of the physical literature.

\subsection{The system of equations} \label{ss:equations}
The present paper deals with the well-posedness theory of non-homogeneous incompressible fluids which present odd viscosity effects.
We will focus on a model discussed in \cite{A-C-G-M}, although in that paper the fluid was assumed to be compressible. The incompressibility condition
imposed hereafter plays a crucial role in the analysis.

Let us present the precise system of equations considered throughout this work. For simplicity, we assume that the 
fluid fills the full plane $\R^2$. By denoting by $u=u(t,x)\in\R^2$ the velocity field of the fluid and by $\rho=\rho(t,x)\geq0$
its density field, the model reads as follows:
\begin{equation}\label{eq:odd}
\left\{
\begin{aligned}
& \d_t\rho\, +\, \div\big(\rho\,u\big)\, =\,0 \\
& \d_t \big(\rho\, u\big)\, +\, \div\big(\rho\, u \otimes u\big)\, +\, \nabla \Pi\,  +\,
\nu_0\, \div\Big(\rho \ \left(\nabla u^\perp\,+\,\nabla^\perp u\right)\Big)\, =\, 0 \\
& \div u\, =\,0\,,
\end{aligned}
\right.
\end{equation}
where the scalar function $\Pi=\Pi(t,x)$ represents the pressure field. As usual, the last equation expresses the incompressibility of the fluid;
correspondingly, the pressure gradient $\nabla\Pi$ can be interpreted as a Lagrangian multiplier enforcing this constraint in the equation for $u$.
The coefficient $\nu_0\in\R\setminus\{0\}$ appearing the the momentum equation represents the kinematic odd viscosity coefficient. It turns out that
its precise value does not play any role in the theory, however we will keep track of it in our computations for the sake of precision.

Next, let us explain the notation used in equations \eqref{eq:odd}.
For a vector $v=(v_1,v_2)\in\R^2$, we have set $v^\perp\,=\,(-v_2,v_1)$ to be the vector obtained by $v$ after a rotation of angle $\pi/2$.
In addition, by $\nabla v$ we mean the transpose of the Jacobian matrix $Dv$ associated to $v$; in particular, the vector columns of the matrix $\nabla v$ are
the vectors $\nabla v_1$ and $\nabla v_2$. The writing $\nabla v^\perp$ has to be interpreted in the same way.
Rotating everything of angle $\pi/2$, we set $D^\perp$ to be the ``Jacobian matrix'' obtained by using the operator $\nabla^\perp=(-\d_2,\d_1)$ instead of $\nabla$
and $\nabla^\perp w\,:=\,^t(D^\perp w)$ to be its transpose. In particular, the $j$-th vector column of $\nabla^\perp w$ is exactly the vector $\nabla^\perp w_j$.

The tensor
\begin{equation} \label{eq:odd-term}
\nu_0\,\div\Big(\rho \ \left(\nabla u^\perp\,+\,\nabla^\perp u\right)\Big)\,=\,\nu_0\,\div\Big(\rho\,\big(\mc O_1\,+\,\mc O_2\big)\Big)\,.
\end{equation}
is precisely the odd viscosity tensor, where, in order to lighten the notation, we have defined
\[
\mc O_1\,:=\,\nabla u^\perp \qquad\qquad \mbox{ and }\qquad\qquad
\mc O_2\,:=\,\nabla^\perp u\,. 
\]
Remark that, differently from the classical viscosity tensor case, the odd viscosity term \eqref{eq:odd-term}
is orthogonal to $u$ with respect to the $L^2$ scalar product, as it can be easily seen after an integration by parts (see Subsection \ref{ss:L^p} for more details).
We deduce that the odd viscosity tensor is \emph{non-dissipative} and it does not contribute at all to the total energy balance of the system.

It follows from the above discussion that no regularising effect for $u$ can be deduced from the odd viscosity term.
On the other hand, this term
involves loss of derivatives in the (classical) energy estimates: let us give some details on this (we refer to \cite{F-GB-S} for a more in-depth discussion).
First of all, if we compute the derivatives in \eqref{eq:odd-term}, we find
\[
\div\Big(\rho \, \big(\mc O_1\,+\,\mc O_2\big) \Big)\,=\, 
\rho\,\Delta u^\perp\,+\,\nabla\rho\cdot\nabla u^\perp\,+\,\nabla\rho\cdot\nabla^\perp u\,.
\]
Now, even though the first term on the right-hand side is skew-symmetric at the first order, the other two terms consume one derivative both on $u$ and $\rho$,
thus precluding any attempt of closing \tsl{a priori} estimates in any (say) Sobolev space $H^s$, for $s>0$ large enough.
Another issue appears at the level of the pressure: restricting for a while to the homogeneous setting $\rho\equiv1$, one sees that,
although the study of \eqref{eq:odd} reduces to the classical $2$-D incompressible Euler equations (which are globally well-posed),
the hydrodynamic pressure gradient $\nabla\Pi$ loses two derivatives with respect to the regularity of the velocity field. As for density-dependent fluids
one cannot avoid the appearing of the pressure gradient in the analysis (see \tsl{e.g.} \cite{D_2010}, \cite{D-F}), this represents a major problem
when studying propagation of regularity.

\subsection{Previous results} \label{ss:previous}

Despite the recent increasing amount of physical literature devoted to parity breaking phenomena and to odd viscosity fluid systems,
one must notice that not so many rigorous results are known on the mathematical side.

To the best of our knowledge, the first work devoted to the mathematical theory of odd viscosity fluids is \cite{G-O}. There, the authors studied
the free-surface problem for homogeneous (hence incompressible) flows with odd viscosity. As the fluid was assumed to have constant density, the
above mentioned difficulties linked with derivative losses were not encoutered therein.

More recently, in \cite{F-GB-S} the authors considered the density-dependent (still incompressible) framework. Similarly to the present work,
they considered the system proposed in \cite{A-C-G-M}, namely equations \eqref{eq:odd} above, where however they neglected the term $\mc O_2\,=\,\nabla^\perp u$.
They proved local well-posedness of the related system of equations in sufficiently regular Sobolev spaces $H^s$, with $s>2$, together with an
unexpected smoothing effect for the density function (which reveals to belong to $H^{s+1}$) and an effect \tsl{\`a la Hoff}
for the pressure gradient (namely, despite $\nabla\Pi$ merely belongs to $H^{s-2}$, one has $\nabla\big(\Pi-\nu_0\rho\o\big)\in H^{s-1}$, with $\o=\curl(u)$
being the vorticity of the fluid).

It is worth mentioning that the semplification $\mc O_2\equiv0$ made in \cite{F-GB-S} do not remove the two issues related to the loss of derivatives
mentioned at the end of Subsection \ref{ss:equations}. To overcome them,
the strategy of \cite{F-GB-S} was based on the discovery of a system of \emph{good unknowns} related to equations \eqref{eq:odd} with $\mc O_2\equiv0$,
namely the quantities
\[
\o\,:=\,\curl(u)\,=\,\d_1u_2-\d_2u_1\,,\quad \eta\,:=\,\curl(\rho\,u)\,=\,\d_1\big(\rho\,u_2\big)-\d_2\big(\rho\,u_1\big)\,,\quad
\theta\,:=\,\eta-\nu_0\Delta\rho\,,
\]
which allowed the authors to propagate higher order norms of the solution $\big(\rho,u,\nabla\Pi\big)$.
The key point of the analysis of \cite{F-GB-S} was the remark that both $\o$ and $\theta$ solve transport equations by $u$, with
$H^{s-1}$ right-hand sides, a fact which made it possible to avoid the loss of derivatives and propagate the $H^s$ norm of $u$ and the $H^{s+1}$ norm of $\rho-1$.
In fact, to be more accurate, an additional obstruction appeared in the equation for the vorticity $\o$, owing to the presence of a
term linked with density and pressure, that is
\[
 \nabla^\perp\left(\frac{1}{\rho}\right)\cdot\nabla\Pi\,.
\]
Keeping in mind what pointed out above, it is easily seen that this term
cannot be better than $H^{s-2}$. At this point, the identification of another important quantity, related to the pressure but possessing more regularity,
saved the game. More precisely, after noticing that $\nabla\big(\Pi-\nu_0\rho\o\big)$ belongs to $H^{s-1}$ and writing
\begin{align*}
 \nabla^\perp\left(\frac{1}{\rho}\right)\cdot\nabla\Pi\,&=\,\nabla^\perp\left(\frac{1}{\rho}\right)\cdot\nabla\big(\Pi-\nu_0\rho\o\big)\,+\,
\nu_0\,\nabla^\perp\left(\frac{1}{\rho}\right)\cdot\nabla\big(\rho\o\big) \\
&=\,
\nabla^\perp\left(\frac{1}{\rho}\right)\cdot\nabla\big(\Pi-\nu_0\rho\o\big)\,-\,\nu_0\,\nabla^\perp\log\rho\cdot\nabla\o\,,
\end{align*}
one remarks that $\o$ indeed satisfies a transport equation with a $H^{s-1}$ right-hand side; however, the transport field is not the
original velocity field $u$, but rather the \emph{effective velocity}
\[
 V_\eff\,:=\,u\,-\,\nu_0\,\nabla^\perp\log\rho\,.
\]

Observe that the assumption $\mc O_2\equiv0$ was thought to be crucial in \cite{F-GB-S}. Indeed, by writing
$\div\big(\rho\, \mc O_2\big)\,=\,\nabla\rho\cdot\nabla^\perp u$ 
and computing the $\curl$ of the momentum equation (in order to find an equation for $\theta$), one finds
\[
\curl\left(\div\big(\rho\, \mc O_2\big)\right) \,=\,\nabla\rho\cdot\nabla^\perp\o\,+\,{\rm l.o.t.}\,.
\]
Hence, we see that a critical term appears, which consumes one derivative of $\o$. Thus, a new derivative loss is created, making the approach of \cite{F-GB-S} inconclusive.

\subsection{Contents of the paper} \label{ss:contents}

The main goal of the present paper is to extend the well-posedness theory of \cite{F-GB-S} to the full odd viscosity model \eqref{eq:odd}
and, at the same time, to propose a different approach, which has the advantage of better highlight the role played by the good unknowns
from \cite{F-GB-S}.

Let us delve into the latter point. Our approach is strongly inspired by Bresch and Desjardins formulation of compressible fluid mechanics \cite{B-D_2004},
based on the discovery of an \emph{effective velocity} hidden in the system of equations and of a new entropy balance (named \emph{BD entropy})
related to it. We refer to \tsl{e.g.} \cite{B-D-Lin}, \cite{B-D}, \cite{Mel-V}, \cite{B-D-Z} and \cite{V-Yu} for previous and more recent advances 
on the theory of compressible fluids using the BD entropy structure.
Interestingly, an effective velocity approach has found applications also in numerical studies (see \tsl{e.g.} \cite{Guer-Pop}),
in order to improve the stability of certain approximation schemes.

Notice that all the above mentioned works were concerned with compressible fluid systems. In this paper,
we are able to identify a suitable effective velocity in an \emph{incompressible} context, namely for the full system of equations
\eqref{eq:odd}. We call this effective velocity $W_\eff$, in order to mark the (slight) difference with $V_\eff$ mentioned above.
In particular, it turns out that $W_\eff$ is also divergence-free and that
\[
 u\,-\,W_\eff\,=\,2\,\nu_0\,\nabla^\perp\log\rho\,.
\]
Surprisingly, in Bresch and Desjardins work \cite{B-D_2004} about the shallow water system, the effective velocity has a very similar form, namely $u-\nabla\log\rho$;
the presence of the $\nabla^\perp$ operator instead of $\nabla$ here is consistent with the fact that we are now working
in an incompressible setting.
Taking advantage of a sort of BD entropy structure underlying equations \eqref{eq:odd}, we can compute an equation for $W_\eff$: it turns out
that the effective velocity is transported by $u$, up to a suitable modification of the pressure function
(in accordance with what found in \cite{F-GB-S}).

On the other hand, we can reformulate the momentum balance equation, namely the second equation appearing in \eqref{eq:odd}, in terms of the new velocity
field $W_\eff$. This is reminiscent of Brenner's formulation of fluid mechanics (see \cite{Brenner_1}, \cite{Brenner_2} and \cite{Feir-Vass}
for more details), which postulates
that the mass velocity (based on the classical notion of mass transport) and the volume velocity (associated to the motion of individual
fluid particles) are \emph{not} the same, but differs the one from the other for a multiple of the density gradient.
However, it is worth remarking that, in Brenner's approach, the effective velocity is introduced \tsl{ad hoc} in the equations, whereas
$W_\eff$ is an intrinsic quantity of system \eqref{eq:odd}; in this sense, our reformulation is closer to the BD
entropy structure previously highlighted.
Then, when reformulating the momentum equation, we find that, up to introduce the same modification of the pressure function as above,
the original velocity field $u$ is itself transported by $W_\eff$.

In passing, we observe that the final reformulated system (see system \eqref{eq:odd-Els} below) shares a striking similarity
with the Els\"asser formulation of a density-dependent version of the ideal MHD equations, see for instance \cite{Cobb-F} and references therein.

Taking advantage of this transport structure (which, however, was present also in \cite{F-GB-S}), we are naturally led to solve
system \eqref{eq:odd} in critical Besov spaces $B^s_{p,r}$ which are embedded in the set of globally Lipschitz functions.
By focusing only in the endpoint situation $p=+\infty$, we then establish a local well-posedness result in Besov spaces $B^s_{\infty,r}$, for any $s>1$ and
$r\in[1,+\infty]$ and for $s=r=1$.
Differently from the analysis of \cite{F-GB-S}, thanks to the ``Els\"asser formulation'' of the odd system,
we are able to do so without requiring any integrability on the density perturbations $\rho-1$,
namely here the density is only assumed to be bounded together with a high enough number of its derivatives.

In addition, in the present paper we investigate the lifespan of the constructed solutions.
The motivation comes from the remark that, when taking $\rho\equiv1$ in \eqref{eq:odd},
the equations degenerate to the classical incompressible Euler equations in $2$-D, which are known to be globally well-posed.
By resorting to improved transport estimates in Besov spaces $B^0_{p,r}$ of null regularity index due to Vishik \cite{Vis} (see also \cite{HK}),
we are able to make this observation quantitative and improve the classical lower bound for the lifespan of solutions which come from
quasi-linear hyperbolic theory. In particular, we prove that, for small variations of the density of size $\rho_0-1= O(\veps)$, for $\veps>0$ small,
the lifespan $T_\veps$ of the corresponding solution verifies the property $T_\veps\longrightarrow +\infty$ in the limit $\veps\to0^+$.
We observe that similar lower bounds have already been proved for other non-homogeneous perturbations of the incompressible Euler equations:
for the density-dependent counterpart \cite{D-F}, for the inviscid Boussinesq system \cite{D_2013},
for a zero-Mach number limit system \cite{F-L} and for the ideal MHD system \cite{Cobb-F} 
under an $L^2$ assumptions on both the velocity and magnetic fields.

\medbreak
Before moving on, let us give a brief overview of the paper.
First of all, in Section \ref{s:odd} we generalise the $H^s$ result of \cite{F-GB-S} to the full model \eqref{eq:odd}. This extension will be needed in the proof
of the well-posedness in endpoint Besov spaces.
In Section \ref{s:reform}, we investigate the role of the effective velocity $W_\eff$, we derive the ``Els\"asser formulation'' of the odd viscosity system and,
after that, we state the main results of the paper, namely a local in time well-posedness result in Besov spaces $B^s_{\infty,r}$ and an estimate
on the lifespan of the solutions. The rest of the paper is devoted to the proof of those results. In Section \ref{s:a-priori},
we obtain \tsl{a priori} estimates for (supposed to exist) smooth solutions of the system; a blow-up criterion will be derived
as a consequence of those estimates. Using the computations of the previous section, in
Section \ref{s:proof-inf} we carry out the rigorous proofs of existence, uniqueness and lower bound for the lifespan.
The paper is ended by an appendix, where we collect
some classical material from Fourier analysis and Littlewood-Paley theory that we need in our study.

\section*{Acknowledgements}

{\small

The work of the first author has been partially supported by the project CRISIS (ANR-20-CE40-0020-01), operated by the French National Research Agency (ANR). The work of the second author is partially supported by the NSF, Award Number: 2306852.

}

\subsection*{Notation} \label{s:not}

For the sake of clarity, let us recall here some notation we have introduced above and which will be freely used throughout this paper.
Here below, $\Omega$ denotes some open smooth domain in $\R^n$, with $n\geq 2$.

Given a function $\alpha:\Omega\longrightarrow\R$, we denote by $\nabla \alpha$ its gradient (with respect to the space variables, in the case where $\alpha$ depends also
on time).
Given a vector field $w$ on $\R^n$, defined on$\Omega\subset\R^n$, we denote by
$Dw$ its Jacobian matrix and by $\nabla w\,:=\,^t(Dw)$ the transpose of the Jacobian matrix.
Recall that the $j$-th vector column of $Dw$ is the vectors $\d_jw$; inversly, the $j$-th vector column of $\nabla w$ is given by the vector $\nabla w_j$
 
\smallbreak 
We now specialise on the two-dimensional case $n=2$.
Given a vector $w\in\R^2$, $w=(w_1,w_2)$, we denote by $w^\perp=(-w_2,w_1)$ the vector obtained from rotating $w$ of an angle $\pi/2$.
We define the $\curl$ operator in dimension $n=2$ as follows: for any vector field $w$ on $\R^2$, we set $\curl(w)\,:=\,\d_1w_2\,-\,\d_2w_1$.

We also set $D^\perp$ to be the ``Jacobian matrix'' obtained by using the operator $\nabla^\perp$ instead of $\nabla$; thus, the $j$-th column of the matrix
$D^\perp w$ is given by $-\d_2w$ if $j=1$, by $\d_1w$ if $j=2$. In other words, the $k$-th vector line of $D^\perp w$ is given by $\nabla^\perp w_k$.

Analogously, we define $\nabla^\perp w\,:=\,^t(D^\perp w)$. Notice that the $j$-th vector column of $\nabla^\perp w$ is exactly the vector $\nabla^\perp w_j$.


\section{Well-posedness for the full system in Sobolev spaces} \label{s:odd}

The goal of this section is to extend the theory developed in \cite{F-GB-S} for the case $\mc O_2\equiv0$ to the case of the full system \eqref{eq:odd}.

\begin{thm}\label{th:odd-full}
Let $s>2$. Take an initial datum $(\rho_0,u_0)\in L^\infty(\R^2)\times H^{s}(\R^2)$ and assume that
$$
\exists\,\rho_*>0\qquad \mbox{ such that }\qquad\qquad \rho_*\,\leq\,\rho_0(x)\,\leq\, \rho^*\,:=\,\left\|\rho_0\right\|_{L^\infty}\,.
$$
In addition, assume that $\rho_0-1\in H^{s+1}(\R^2)$.

Then, there exists $0<T=T(\rho_0,u_0)\leq +\infty$ and a unique solution $(\rho,u,\nabla \Pi)$ to system \eqref{eq:odd} on $[0,T]\times\R^2$ such that:
\begin{itemize}
 \item $\rho\in L^\infty\big([0,T]\times\R^2\big)$ verifies $\rho_*\leq\rho\leq\rho^* $ on $[0,T]\times\R^2$ and $\rho-1\in C\big([0,T];H^{s+1}(\R^2)\big)$;
 \item $u$ belongs to $C\big([0,T];H^s(\R^2)\big)$;
 \item $\nabla \Pi\in C\big([0,T];H^{s-2}(\R^2)\big)$, while the difference $\Pi-\nu_0\rho\o$, with $\o\,:=\,\curl(u)\,=\,\d_1u_2-\d_2u_1$ being
 the vorticity of $u$, satisfies the property $\nabla\left(\Pi\,-\,\nu_0\,\rho\,\o\right)\in C\big([0,T];H^{s-1}(\R^2)\big)$.
\end{itemize}
\end{thm}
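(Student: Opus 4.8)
The plan is to follow the good-unknown strategy of \cite{F-GB-S}, the decisive new point being that the additional derivative losses created by the term $\mc O_2$ recombine with those already present into clean transport structures, at the cost of replacing the effective velocity $V_\eff$ by $W_\eff:=u-2\nu_0\nabla^\perp\log\rho$. The proof splits into the derivation of \emph{a priori} estimates for smooth solutions (the heart of the matter), followed by the construction of solutions by approximation, uniqueness, and time-continuity.

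For the \emph{a priori} estimates I would introduce the good unknowns $\o:=\curl u$, the modified momentum vorticity $\theta:=\curl(\rho u)-2\nu_0\Delta\rho$, and the effective pressure $Q:=\Pi-\nu_0\rho\o$. Using $\div u=0$ together with $\Delta u=\nabla^\perp\o$ and $\div\mc O_2=0$, one rewrites the odd term as
\[
\nu_0\,\div\big(\rho(\mc O_1+\mc O_2)\big)\,=\,-\nu_0\,\nabla(\rho\o)\,+\,\nu_0\,\o\,\nabla\rho\,+\,\nu_0\big(\nabla\rho\cdot\mc O_1+\nabla\rho\cdot\mc O_2\big)\,,
\]
so that the momentum equation becomes $\rho(\d_t u+u\cdot\nabla u)+\nabla Q+\nu_0\o\nabla\rho+\nu_0(\nabla\rho\cdot\mc O_1+\nabla\rho\cdot\mc O_2)=0$, all the non-gradient terms being products of first-order derivatives. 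This is the mechanism producing the Hoff-type gain $\nabla Q\in H^{s-1}$, which I would make quantitative through an elliptic estimate obtained by taking the divergence of the momentum equation (the source then sitting in $H^{s-2}$).

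The core computation is the curl of the momentum equation. After substituting $\Pi=Q+\nu_0\rho\o$, the pressure term $\nabla^\perp(1/\rho)\cdot\nabla\Pi$ contributes $\nu_0\,\nabla^\perp\log\rho\cdot\nabla\o$, while the new term satisfies $\curl(\nabla\rho\cdot\mc O_2)=\nabla\rho\cdot\nabla^\perp\o+\text{l.o.t.}$ and contributes an \emph{identical} loss (the term $\curl(\nabla\rho\cdot\mc O_1)$ being lower order thanks to incompressibility). Summing the two, $\o$ solves a transport equation by $W_\eff$ with $H^{s-1}$ right-hand side. In parallel, $\d_t\curl(\rho u)+u\cdot\nabla\,\curl(\rho u)=-2\nu_0\,\nabla\rho\cdot\nabla^\perp\o+\text{l.o.t.}$, whereas applying $\Delta$ to the mass equation gives $\d_t\Delta\rho+u\cdot\nabla\Delta\rho=-\nabla\rho\cdot\nabla^\perp\o+\text{l.o.t.}$; hence $\theta=\curl(\rho u)-2\nu_0\Delta\rho$ is transported by $u$ with all losses cancelled, and reading $\Delta\rho$ off $\theta$ and $\curl(\rho u)$ yields the density smoothing $\rho-1\in H^{s+1}$. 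I would then close the estimates: the $L^2$ level of $u$ by the energy balance (the odd term being orthogonal to $u$), the top-order norm of $\o$ by the transport estimate along $W_\eff$, that of $\theta$ along $u$, and $\nabla Q$ by the elliptic estimate; since $s>2$ gives $W^{1,\infty}$ control of $u$, $\rho$ and $W_\eff$, the coupled inequalities close by Gronwall, producing the lifespan $T$ and a continuation criterion. Existence then follows from a Friedrichs (or iterative) approximation scheme on which these bounds are uniform, by Aubin--Lions compactness; uniqueness and stability are obtained at a lower regularity by estimating the difference of two solutions through the same transport structure, and the time continuity is recovered by a Bona--Smith regularisation.

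The main obstacle is precisely this \emph{a priori} step: one must verify that the full tensor preserves the good-unknown structure, that is, that the extra loss produced by $\mc O_2$ adds to the pressure contribution so as to \emph{double} the shift of the transport velocity (yielding $W_\eff$ rather than $V_\eff$) and, in the equation for $\curl(\rho u)$, that the combined $\mc O_1$ and $\mc O_2$ losses match exactly the commutator $[\Delta,u\cdot\nabla]\rho$ with the correct factor $2\nu_0$. Obtaining these cancellations not merely up to lower-order terms, but in a form closed enough to run a Gronwall argument on the coupled $(\o,\theta,\nabla Q)$ system, is where the difficulty concentrates.
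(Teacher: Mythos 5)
Your proposal is correct, but it takes a genuinely different route from the paper. The paper's proof is a pure reduction argument, and it is very short: since $\div u=0$, one has the pointwise algebraic identity $\mc O_2-\mc O_1=\nabla^\perp u-\nabla u^\perp=\o\,\Id$ (this is \eqref{eq:O-diff}), whence
\[
\div\Big(\rho\,\big(\mc O_1+\mc O_2\big)\Big)\,=\,2\,\div\big(\rho\,\nabla u^\perp\big)\,+\,\nabla\big(\rho\,\o\big)\,.
\]
Thus the full system \eqref{eq:odd} \emph{is} the reduced system of \cite{F-GB-S} (the one with $\mc O_2\equiv0$), with odd viscosity coefficient $2\nu_0$ and pressure $\wtilde\Pi=\Pi+\nu_0\rho\o$; existence, all the regularity claims (the Hoff-type property included, since $\wtilde\Pi-2\nu_0\rho\,\o=\Pi-\nu_0\rho\,\o$) and uniqueness then follow by quoting Theorem 1.1 of \cite{F-GB-S} as a black box, with no new estimates whatsoever. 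You instead re-run the entire good-unknown machinery for the full tensor. Your structural claims are the right ones and do check out: by incompressibility $\curl(\nabla\rho\cdot\mc O_1)$ contains no second derivatives of $u$, while $\curl(\nabla\rho\cdot\mc O_2)=\nabla\rho\cdot\nabla^\perp\o+{\rm l.o.t.}$, so the $\mc O_2$ loss adds to the pressure one and doubles the shift, giving transport of $\o$ by $W_\eff=u-2\nu_0\nabla^\perp\log\rho$ and the cancellation in $\theta=\curl(\rho u)-2\nu_0\Delta\rho=\curl(\rho\,W_\eff)$; note that your ``doubling'' mechanism is exactly equivalent to the paper's identity, since it amounts to $\div(\rho\,\mc O_2)=\div(\rho\,\mc O_1)+\nabla(\rho\o)$. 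What each approach buys: the paper's reduction is economical, makes the origin of the factor $2$ transparent, and inherits for free the delicate parts (approximation scheme, compactness, uniqueness, time continuity) from \cite{F-GB-S}, whereas your route is self-contained and in fact anticipates the effective-velocity/Els\"asser structure that the paper only develops later (Sections \ref{s:reform}--\ref{s:a-priori}) for the $B^s_{\infty,r}$ theory; the price is that every step you sketch (coupled Gr\"onwall argument, Friedrichs scheme, Aubin--Lions, Bona--Smith) would have to be carried out in detail, essentially reproving \cite{F-GB-S} rather than invoking it.
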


\begin{proof}
Recall the notation $\mc O_1\,:=\,\nabla u^\perp$ and $\mc O_2\,:=\,\nabla^\perp u$ introduced above.
The proof of Theorem \ref{th:odd-full} relies on the following simple but fundamental observation: if we compute the difference $\mc O_2-\mc O_1$,
we get
\begin{equation} \label{eq:O-diff}
\mc O_2\,-\,\mc O_1\,=\,\nabla^\perp u\,-\,\nabla u^\perp\,=\,\o\,\Id\,,
\end{equation}
where $\Id$ is the $2\times2$ identity matrix and $\o\,:=\,\curl(u)\,=\,\d_1u^2-\d_2u^1$ is the vorticity of the fluid, as in the statement.
Therefore, computing the odd viscosity term, we find
\begin{align}
\label{eq:odd-comput}
\div\Big(\rho\,\big(\mc O_1\,+\,\mc O_2\big)\Big)\,&=\,\div\left(2\,\rho\,\mc O_1\right)\,+\,\div\Big(\rho\,\big(\mc O_2\,-\,\mc O_1\big)\Big) \\
\nonumber
&=\,2\,\div\left(\rho\,\nabla u^\perp\right)\,+\,\nabla\big(\rho\,\o\big)\,.
\end{align}

Thus, system \eqref{eq:odd} can be recasted in the form
\begin{equation}\label{eq:odd_reform}
\left\{
\begin{aligned}
& \d_t\rho\, +\, \div\big(\rho\,u\big)\, =\,0 \\
& \d_t \big(\rho\, u\big)\, +\, \div\big(\rho\, u \otimes u\big)\, +\, \nabla \wtilde\Pi\,  +\,
2\,\nu_0\, \div\Big(\rho \,\nabla u^\perp\Big)\, =\, 0 \\
& \div u\, =\,0\,,
\end{aligned}
\right.
\end{equation}
where we have defined
\[
\wtilde\Pi\,=\,\Pi\,+\,\nu_0\,\rho\,\o\,.
\]

At this point, we can apply the result of \cite{F-GB-S} to get, for some positive time $T>0$, the existence of a unique solution to system \eqref{eq:odd_reform}
related to the initial datum $\big(\rho_0,u_0\big)$ verifying the assumptions of the statement.
In order to stress that we are solving a different system, let us call
$\big(\vrho,U,\nabla\wtilde\Pi\big)$ such a solution. By Theorem 1.1 of \cite{F-GB-S}, we know the following properties:
\begin{itemize}
 \item $\vrho\in L^\infty\big([0,T]\times\R^2\big)$ verifies $\rho_*\leq\vrho\leq\rho^* $ pointwise on $[0,T]\times\R^2$,
 together with the regularity property $\vrho-1\in C\big([0,T];H^{s+1}(\R^2)\big)$;
 \item $U$ belongs to $C\big([0,T];H^s(\R^2)\big)$;
 \item $\nabla \wtilde\Pi\in C\big([0,T];H^{s-2}(\R^2)\big)$, together with $\nabla\left(\wtilde\Pi-2\nu_0\vrho\Omega\right)\in C\big([0,T];H^{s-1}(\R^2)\big)$,
 where $\Omega\,:=\,\curl(U)\,=\,\d_1U^2\,-\,\d_2U^1$ is the vorticity associated to the vector field $U$.
\end{itemize}

Now, let us define
\[
\rho\,:=\,\vrho\,,\qquad\quad u\,:=\,U\,,\qquad\quad
\Pi\,:=\,\wtilde\Pi\,-\,\nu_0\,\vrho\,\Omega\,. 
\]
Performing computations \eqref{eq:odd-comput} backwards, we see that the triplet $\big(\rho,u,\nabla\Pi\big)$ is a solution of system \eqref{eq:odd}.
In addition, it is plain to see that all the functions $\rho$, $u$ and $\nabla\Pi$ satisfy the claimed regularity properties.
Next, we notice that, by definition, we have
\[
\Pi\,-\,\nu_0\,\rho\,\o\,=\,\wtilde\Pi\,-\,2\,\nu_0\,\vrho\,\Omega\,.
\]
Owing to the fact that $\o=\Omega$, the previous relation immediately implies also the last property
$\nabla\big(\Pi\,-\,\nu_0\,\rho\,\o\big)\in C\big([0,T];H^{s-1}(\R^2)\big)$.
This completes the proof of the existence of a solution to the full system \eqref{eq:odd}.

\medbreak
The proof of uniqueness also uses the reformulation \eqref{eq:odd_reform}. As a matter of fact, assume to have two triplets
$\big(\rho_j,u_j,\nabla\Pi_j\big)_{j=1,2}$ of solutions to system \eqref{eq:odd} on some time interval $[0,T]$, both satisfying
the claimed regularity assumptions.

Then, each triplet must also solve \eqref{eq:odd_reform}, for suitable pressure functions $\wtilde\Pi_j\,=\,\Pi_j\,-\,\nu_0\,\rho_j\,\o_j$,
with $\o_j\,:=\,\curl(u_j)$ the vorticity of the velocity field $u_j$.
Then, the uniqueness result of \cite{F-GB-S} yields
$\rho_1=\rho_2$, $u_1=u_2$ and $\nabla\wtilde\Pi_1=\nabla\wtilde\Pi_2$. By using those properties, we find $\nabla\Pi_1=\nabla\Pi_2$. So, also
uniqueness of solutions is proved.

This completes the proof of Theorem \ref{th:odd-full}.
\end{proof}

\section{Reformulation and well-posedness in endpoint spaces} \label{s:reform}

In this section, we investigate the structure of equations \eqref{eq:odd} more in detail. In particular, we find a reformulation of the system,
which resembles to the Els\"asser formulation of the ideal MHD, see \tsl{e.g.} \cite{Cobb-F} for more details. Besides, such a reformulation
allows to better clarify the structure of the odd system highlighted in \cite{F-GB-S}.
Then, in Subsection \ref{ss:wp-inf} we state the main theorems of the paper, namely a well-posedness result in endpoint Besov spaces $B^s_{\infty,r}$,
together with an explicit lower bound, which improves the classical lower bound coming from hyperbolic theory, for the lifespan of the solutions.

\subsection{Role of the effective velocity} \label{ss:effective}

In the analysis of \cite{F-GB-S}, which were carried out for the reduced equations \eqref{eq:odd_reform}, the \emph{effective velocity}
\[
V_\eff\,:=\,u\,-\,\nu_0\,\nabla^\perp\log\rho
\]
played a fundamental role. Indeed, it turns out that $V_\eff$ was the vector field which advected the vorticity of the fluid. Such a phenomenon
allowed to avoid the loss of derivatives caused by the low regularity of the pressure term in the equations.

However, we remark that the introduction of that vector field was somehow \tsl{ad hoc}, as the presence of $V_\eff$ is hidden in the equations.
The goal of this subsection is to make the structure of the (full) odd system \eqref{eq:odd} more clear and the appearing of $V_\eff$ therein more evident.
As a matter of fact, it turns out that, for the study of the full odd model \eqref{eq:odd}, the effective velocity
\begin{equation} \label{def:W_eff}
 W_\eff\,:=\,u\,-\,2\,\nu_0\,\nabla^\perp\log\rho
\end{equation}
is a better suited vector field than $V_\eff$.

\subsubsection{BD-entropy structure} \label{sss:BD}

System \eqref{eq:odd} provides a sort of smoothing effect on the density function. As a matter of fact, from Theorem \ref{th:odd-full} we see
that $\rho-1$ has $H^{s+1}$ regularity, even though it is transported by a merely $H^s$ vector field.
The goal of the present section is to explain this smoothing effect by making a BD-entropy structure appear from the equations.

To this end, we start by differentiating the mass equation in \eqref{eq:odd}: from simple computations, we easily get
\begin{equation} \label{eq:Dlog}
\d_t\left(\nu_0\,\rho\,\nabla\log\rho\right)\,+\,\div\left(\nu_0\,\rho\,u\otimes\nabla\log\rho\right)\,+\,\div\left(\nu_0\,\rho\,Du\right)\,=\,0\,,
\end{equation}
where we have denoted
\[
\div\left(\nu_0\,\rho\,Du\right)\,=\,\sum_{j=1,2}\d_j\big(\nu_0\,\rho\,(Du)_{j,\bullet}\big)\,=\,\sum_{j=1,2}\d_j\big(\nu_0\,\rho\,\nabla u_j\big)\,.
\]
Next, we consider the momentum equation. The starting point is equation \eqref{eq:odd_reform},
namely
\[
\rho\,\d_tu\, +\, \rho\,u\cdot\nabla u\, +\, \nabla \wtilde\Pi\,  +\, 2\,\nu_0\, \div\Big(\rho \,\nabla u^\perp\Big)\, =\, 0\,,
\]
where we recall that $\wtilde \Pi$ is defined by the formula $\wtilde\Pi\,=\,\Pi\,+\,\nu_0\,\rho\,\o$.
By rotating the previous equation of angle $\pi/2$, we obtain
\begin{equation} \label{eq:u^perp}
\rho\,\d_tu^\perp\,+\,\rho\,u\cdot\nabla u^\perp\,+\,\nabla^\perp\wtilde\Pi\,-\,2\,\nu_0\,\div\left(\rho\,\nabla u\right)
\,=\,0\,,
\end{equation}
where we have used that $(v^\perp)^\perp=-v$. Observe that, without the reformulation used in \eqref{eq:odd_reform},
the $\mc O_2$ term would have given rise to an additional term of the kind 
$-2\nu_0\div\left(\rho\,Du\right)$, as $\nabla^\perp u^\perp\,=\,-Du$ owing to the divergence-free constraint over $u$.

At this point, we can write an equation for $W_\eff\,=\,u-2\nu_0\nabla^\perp\log\rho$, or rather for its rotation of angle $\pi/2$, namely the vector
\[
W_\eff^\perp\,=\,u^\perp\,+\,2\,\nu_0\,\nabla\log\rho\,. 
\]
As a matter of fact, taking the sum of twice equation \eqref{eq:Dlog} with equation \eqref{eq:u^perp}, we find
\begin{equation} \label{eq:W^perp}
\rho\,\d_tW_\eff^\perp\,+\,\rho\,u\cdot\nabla W_\eff^\perp\,+\,\nabla^\perp\wtilde\Pi\,+\,4\,\nu_0\,\div\left(\rho\,\mc A u\right)\,=\,0\,,
\end{equation}
where $\mc A u$ denotes the skew-symmetric part of the Jacobian matrix $Du$, namely
\[
\mc Au\,:=\,\frac{1}{2}\,\Big(Du\,-\,\nabla u\Big)\,.
\]
Observe that the following relation holds true:
\[
2\,\nu_0\,\div\left(\rho\,\mc Au\right)\,=\,-\,\nu_0\,\nabla^\perp\big(\rho\,\o\big)\,.
\]
Therefore, after performing another rotation of angle $\pi/2$, from \eqref{eq:W^perp} we deduce the equation
\begin{equation} \label{eq:W_eff}
\rho\,\d_tW_\eff\,+\,\rho\,u\cdot\nabla W_\eff\,+\,\nabla\Pi^0\,=\,0\,,
\end{equation}
where this time we have defined
\[
 \Pi^0\,:=\,\wtilde\Pi\,-\,2\,\nu_0\,\rho\,\o\,=\,\Pi\,-\,\nu_0\,\rho\,\o\,.
\]
Recall that $\o\,=\,\curl(u)$ is the vorticity associated to the velocity field $u$.

\subsubsection{Effective transport \tsl{\`a la Brenner}} \label{sss:Brenner}

In the previous paragraph, we have used the effective velocity $W_\eff$ as a \emph{passive} quantity,
inasmuch as we have shown that $W_\eff$ is advected by the velocity field $u$ of the fluid.
In this subsection we adopt a different point of view, which is inspired by the approach of Brenner (see \tsl{e.g.} \cite{Feir-Vass} and references therein)
and consists in looking at $W_\eff$ rather as an \emph{active} vector field which advects the quantities
of the dynamics.


\medbreak
To begin with, we observe that, thanks to the definition \eqref{def:W_eff} of the effective velocity $W_\eff$ and the
orthogonality relation $\nabla^\perp\log\rho\cdot\nabla\rho=0$,
we can recast the mass equation in the form
\begin{equation} \label{eq:mass_eff_W}
\d_t\rho\,+\,W_\eff\cdot\nabla\rho\,=\,0\,.
\end{equation}
This formulation of the mass equation is of course equivalent to the first equation of system \eqref{eq:odd}, at least for smooth density functions $\rho$.

Next, we introduce the effective velocity $W_\eff$ in the transport term of the momentum equation. Simple computations allow us to get
\begin{equation} \label{eq:mom_eff_provv}
 \rho\,\d_tu\,+\,\rho\,W_\eff\cdot\nabla u\,+\,2\,\nu_0\,\rho\,\nabla^\perp\log\rho\cdot\nabla u\,+\,\nabla\Pi\,+\,
 \nu_0\, \div\Big(\rho \ \left(\nabla u^\perp\,+\,\nabla^\perp u\right)\Big)\, =\, 0\,.
\end{equation}
Recall that the divergence-free constraints over both $u$ and $W_\eff$ hold:
\[
\div u\,=\,\div W_\eff\,=\,0\,.
\]
Thus, we develop the derivative and compute
\[
2\,\nu_0\,\rho\,\nabla^\perp\log\rho\cdot\nabla u\,=\, 
-\,2\,\nu_0\,\nabla\rho\cdot\nabla^\perp u\,=\,
-\,2\,\nu_0\,\div\left(\rho\,\nabla^\perp u\right)\,=\,-\,2\,\nu_0\,\div\big(\rho\,\mc O_2\big)\,.
\]
Therefore, insterting this latter equation into \eqref{eq:mom_eff_provv} and using relation \eqref{eq:O-diff}, we discover that
\begin{equation} \label{eq:mom_eff_W}
\rho\,\d_tu\,+\,\rho\,W_\eff\cdot\nabla u\,+\,\nabla\Pi^0\,=\,0\,,
\end{equation}
where $\Pi^0\,:=\,\Pi\,-\,\nu_0\,\rho\,\o$ is exactly the modified pressure introduced in equation \eqref{eq:W_eff} above.

\subsubsection{The final reformulated system} \label{sss:final}

Collecting equations \eqref{eq:W_eff} and \eqref{eq:mom_eff_W} together, we finally find the reformulated version of the odd system \eqref{eq:odd}, that is
\begin{equation} \label{eq:odd-Els}
\left\{\begin{array}{l}
\d_t\rho\,+\,u\cdot\nabla\rho\,=\,0 \\[1ex]
\rho\,\d_t u\,+\,\rho\,W_\eff\cdot\nabla u\,+\,\nabla\Pi^0\,=\,0 \\[1ex]
\rho\,\d_tW_\eff\,+\,\rho\,u\cdot\nabla W_\eff\,+\,\nabla\Pi^0\,=\,0 \\[1ex]
\div u\,=\,\div W_\eff\,=\,0\,.
       \end{array}
\right.
\end{equation}
Notice that, owing to \eqref{eq:mass_eff_W}, the first equation could have be written in many different ways, for instance
\[
\d_t\rho\,+\,\frac{1}{2}\,\big(u\,+\,W_\eff\big)\cdot\nabla\rho\,=\,0\,.
\]
However, this observation will not really be useful for what follows, so we prefer to stick to the original mass conservation equation.

\medbreak
Before going on, we want to point out a striking similarity between the reformulated odd-viscosity system \eqref{eq:odd-Els}
and the Els\"asser formulation of the ideal MHD equations (see \tsl{e.g.} \cite{Cobb-F} and references therein).
Namely, forgetting for a while the presence of the variable density $\rho$, the vector fields $u$ and $W_\eff$ in \eqref{eq:odd-Els}
play the same role of the Els\"asser variables, inasmuch as they are divergence-free vector field and each of them is transported by the other one.
In this perspective, the ``magnetic field'' $b$ appearing in the ``ideal MHD system'' behind equations \eqref{eq:odd-Els}
is $b\,=\,\big(u-W_\eff\big)/2\,=\,\nu_0\,\nabla^\perp\log\rho$.

\subsection{Well-posedness in a $L^\infty$-based functional framework} \label{ss:wp-inf}

The similarity of equations \eqref{eq:odd-Els}, with the Els\"asser formulation of the ideal MHD system
prompts us to investigate the well-posedness of system \eqref{eq:odd} in critical Besov spaces $B^s_{\infty,r}$ under the conditions
\begin{equation} \label{eq:Lip-cond}
 s>1\quad \mbox{ and }\quad r\in[1,+\infty]\,,\qquad\qquad \mbox{ or }\qquad\qquad s=r=1\,.
\end{equation}
As is well-known, either of those conditions guarantees that $B^s_{\infty,r}$ embeds in the space $W^{1,\infty}$ of globally Lipschitz functions.

The first main result of this part is contained in the next statement, concerning existence and uniqueness of solutions in the above mentioned
functional framework.
Properties of the lifespan of such solutions are stated in Theorem \ref{t:lifespan} below.

\begin{thm} \label{th:wp-inf}
Let $(s,r)\in\R\times[1,+\infty]$ such that either $s>1$, or $s=1$ and $r=1$.
Take an initial datum $\big(\rho_0,u_0\big)\in L^\infty(\R^2)\times L^2(\R^2)$ such that the following conditions are satisfied:
\begin{itemize}
 \item there exists a real number $\rho_*>0$ such that 
\[
\forall\,x\in\R^2\,,\qquad\qquad \rho_*\,\leq\, \rho_0(x)\,\leq\, \rho^*\,:=\,\left\|\rho_0\right\|_{L^\infty} \,;
\]
\item the initial velocity field $u_0$ belongs to the space $B^s_{\infty,r}(\R^2)$ and verifies $\div u_0=0$, while the initial density function $\rho_0$ belongs to
$B^{s+1}_{\infty,r}$.
\end{itemize}

Then, there exist a time $T\,=\,T(\rho_0,u_0)\,\in\;]0,+\infty]$ and a unique solution $\big(\rho,u,\nabla\Pi\big)$ to system
\eqref{eq:odd} on $[0,T]\times\R^2$, related to the initial datum $\big(\rho_0,u_0\big)$, satisfying the following regularity properties:
\begin{enumerate}[(i)]
 \item $\rho\in L^\infty\big([0,T]\times\R^2\big)$ verifies $\rho_*\leq\rho\leq\rho^* $, together with $\rho\in C\big([0,T];B^{s+1}_{\infty,r}(\R^2)\big)$;
\item $u$ belongs to $C\big([0,T];L^2(\R^2)\cap B^s_{\infty,r}(\R^2)\big)$;
\item the pressure gradient verifies $\nabla \Pi\in C\big([0,T];B^{s-2}_{\infty,r}(\R^2)\big)$ and, if we define $\o\,:=\,\curl(u)\,=\,\d_1u_2\,-\,\d_2u_1$
the vorticity of the fluid,
$\nabla\big(\Pi\,-\,\nu_0\,\rho\,\o\big)\in C\big([0,T];L^2(\R^2)\cap B^{s}_{\infty,r}(\R^2)\big)$.
\end{enumerate}
In the case $r=+\infty$, the continuity in time is only weak, thus the spaces $C\big([0,T];B^{\s}_{\infty,r}(\R^2)\big)$
have to be replaced by $C_w\big([0,T];B^{\s}_{\infty,\infty}(\R^2)\big)$.

In addition, if we define $W_\eff\,:=\,u\,-\,\nu_0\,\nabla^\perp\log\rho$ and $\Pi^0\,:=\,\Pi\,-\,\nu_0\,\rho\,\o$, then the set
$\big(\rho,u,W_\eff,\nabla\Pi^0\big)$ solves the ``Els\"asser formulation'' \eqref{eq:odd-Els} of the odd viscosity system.
\end{thm}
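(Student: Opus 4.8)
The plan is to build the solution on top of the Sobolev theory of Theorem~\ref{th:odd-full}, deriving uniform bounds in the $L^\infty$-based scale directly from the Els\"asser formulation \eqref{eq:odd-Els}, and then passing to the limit. First I would regularise the datum, producing smooth divergence-free data $(\rho_0^n,u_0^n)$ that preserve the pointwise bounds $\rho_*\le\rho_0^n\le\rho^*$ (up to an arbitrarily small loss), lie in $H^{s'}$ for $s'$ large, and approximate $(\rho_0,u_0)$ in the appropriate topology. Theorem~\ref{th:odd-full} then produces, for each $n$, a smooth solution $(\rho^n,u^n,\nabla\Pi^n)$ which, by the computations of Section~\ref{s:reform}, also solves \eqref{eq:odd-Els} with effective velocity $W_\eff^n$ and modified pressure $(\Pi^0)^n=\Pi^n-\nu_0\rho^n\o^n$. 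The heart of the matter is to show that these approximations are bounded, uniformly in $n$, in the norms of the statement, on a time interval $[0,T]$ depending only on $\|\rho_0\|_{B^{s+1}_{\infty,r}}$, $\rho_*$ and $\|u_0\|_{L^2\cap B^s_{\infty,r}}$; since the bounds will only use the Lipschitz condition \eqref{eq:Lip-cond}, this is what allows us to reach the endpoint regularity.

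The uniform estimates I would organise around the transport structure of \eqref{eq:odd-Els}. Because $u$ is advected by $W_\eff$ and $W_\eff$ by $u$, both modulo the \emph{same} forcing $\rho^{-1}\nabla\Pi^0$, the classical transport estimates in Besov spaces (collected in the appendix) give, under \eqref{eq:Lip-cond},
\begin{equation*}
\|u(t)\|_{B^s_{\infty,r}}+\|W_\eff(t)\|_{B^s_{\infty,r}}\;\lesssim\;\Big(\|u_0\|_{B^s_{\infty,r}}+\|W_\eff(0)\|_{B^s_{\infty,r}}+\int_0^t\big\|\tfrac{1}{\rho}\nabla\Pi^0\big\|_{B^s_{\infty,r}}\,\dd\tau\Big)\,\exp\!\Big(C\!\int_0^t V(\tau)\,\dd\tau\Big)\,,
\end{equation*}
with $V\sim\|\nabla u\|_{L^\infty}+\|\nabla W_\eff\|_{L^\infty}$ controlled by the $B^s_{\infty,r}$ norms through the embedding $B^s_{\infty,r}\hookrightarrow W^{1,\infty}$. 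For the density, I would first propagate $\rho\in B^s_{\infty,r}$ from the mass equation and then \emph{upgrade} to the claimed $B^{s+1}_{\infty,r}$ regularity by reading the algebraic identity $u-W_\eff=2\,\nu_0\,\nabla^\perp\log\rho$ of \eqref{def:W_eff} the other way round: once $u$ and $W_\eff$ are bounded in $B^s_{\infty,r}$, so is $\nabla\log\rho$, whence $\rho\in B^{s+1}_{\infty,r}$ using the lower bound $\rho_*$ and the fact that $B^s_{\infty,r}$ is an algebra under \eqref{eq:Lip-cond}. This is the Besov incarnation of the density-smoothing effect of \cite{F-GB-S}, and the whole estimate closes by a Gronwall argument \emph{provided} the pressure term is controlled in $B^s_{\infty,r}$.

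This pressure bound is the main obstacle, and it is exactly where the Els\"asser structure pays off. Taking the divergence of the momentum equation \eqref{eq:mom_eff_W} divided by $\rho$ and using $\div u=0$ yields the elliptic problem
\begin{equation*}
\div\!\Big(\tfrac{1}{\rho}\,\nabla\Pi^0\Big)\,=\,-\,\div\big(W_\eff\cdot\nabla u\big)\,.
\end{equation*}
Since both $u$ and $W_\eff$ are divergence-free, the right-hand side is a quadratic contraction of the \emph{first} derivatives of $u$ and $W_\eff$, hence belongs to $B^{s-1}_{\infty,r}$ (the product estimate being licit because $s-1\ge0$ under \eqref{eq:Lip-cond}), rather than being a second divergence of a product as is the case for the raw pressure $\nabla\Pi\in B^{s-2}_{\infty,r}$. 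A variable-coefficient elliptic estimate in the $L^\infty$-based Besov scale then gives $\nabla\Pi^0\in B^s_{\infty,r}$, with a bound depending only on $\|\rho\|_{B^{s+1}_{\infty,r}}$ and $\rho_*$. I expect this to be the delicate point: elliptic operators are \emph{not} bounded on $L^\infty$-type spaces, so the inversion of $\div(\rho^{-1}\nabla\,\cdot\,)$ must be carried out through paradifferential calculus, freezing the coefficient $\rho^{-1}$, treating the resulting commutator as a lower-order perturbation, and inverting the constant-coefficient principal part by a Neumann-type argument. The accompanying $L^2$ information on $u$ and $\nabla\Pi^0$ is of lower order and follows from the energy balance $\frac{d}{dt}\int\rho|u|^2=0$ of the original system \eqref{eq:odd} together with the $L^2$ theory for the same elliptic problem.

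Granted the uniform bounds on a common $[0,T]$, existence follows by compactness: equiboundedness in the high norms plus equicontinuity in time (read off the equations) yields a subsequence converging to a limit $(\rho,u,\nabla\Pi)$ which solves \eqref{eq:odd} with the stated regularity, the endpoint being reached because the bounds never used more than \eqref{eq:Lip-cond}. Uniqueness I would obtain from a stability estimate for the difference of two solutions in a norm one degree below, again transporting the difference along \eqref{eq:odd-Els} and closing by Gronwall, the one-derivative loss being the standard price for continuity of the solution map. Finally, the time-continuity assertions follow from the transport structure (with only weak continuity when $r=+\infty$, since the high-frequency blocks are then merely bounded), and the fact that $(\rho,u,W_\eff,\nabla\Pi^0)$ solves the Els\"asser system \eqref{eq:odd-Els} is inherited from the approximations, where these identities hold exactly and are preserved in the limit.
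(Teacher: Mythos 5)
Your overall architecture --- regularise the data, solve via Theorem \ref{th:odd-full}, derive uniform bounds from the Els\"asser formulation \eqref{eq:odd-Els} with the pressure controlled through the elliptic equation, then conclude by compactness and a Gr\"onwall-type stability argument --- is the same as the paper's, and for $s>1$ most of your steps can be carried out (the paper actually estimates the vorticities $\o$ and $\z_\eff$ in $B^{s-1}_{\infty,r}$ after a low/high frequency splitting, rather than the velocities in $B^s_{\infty,r}$; for the bare well-posedness statement this is largely presentational, though it is what later yields the Lipschitz-only continuation criterion and the lifespan bound). There is, however, a genuine gap at the endpoint $s=r=1$, which the theorem explicitly covers. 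You justify the key product estimates by saying they are ``licit because $s-1\geq0$'', but at $s-1=0$ they fail: $B^0_{\infty,1}$ is not an algebra, and by Proposition \ref{p:op} the remainder of two $B^0_{\infty,1}$ functions is in general only bounded in $B^0_{\infty,\infty}$. Hence neither the source term $\div\big(W_\eff\cdot\nabla u\big)=\nabla W_\eff:\nabla u$ of your elliptic problem, nor the pressure term (which after taking the $\curl$ reads $\nabla^\perp(1/\rho)\cdot\nabla\Pi^0$), can be handled by generic product estimates; one must exploit special structure --- the double-divergence form $\div\big(u\cdot\nabla W_\eff\big)=\sum_{i,j}\d_i\d_j\big(u_j\,W_{\eff,i}\big)$ and the commutator form $\nabla^\perp(1/\rho)\cdot\nabla\Pi^0=\curl\big(\rho^{-1}\nabla\Pi^0\big)$ --- exactly as in \cite{D-F}, to which the paper appeals at precisely these points. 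Without this, your estimates do not close at the endpoint. Relatedly, a Neumann-type inversion of $\div(\rho^{-1}\nabla\,\cdot\,)$ would require smallness of the oscillation of $\rho$, which is not assumed; the paper instead bounds the low frequencies in $L^2$ by Lax--Milgram, writes an equation for $\Delta\Pi^0$, and absorbs the variable-coefficient term $\nabla\log\rho\cdot\nabla\Pi^0$ by interpolation and Young's inequality.

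The second gap is in the approximation of the data. Theorem \ref{th:odd-full} requires $\rho_0-1\in H^{s+1}$, but under the hypotheses of Theorem \ref{th:wp-inf} the density variation need not decay at infinity; since $S_n\rho_0-1=S_n(\rho_0-1)$, mollification or spectral truncation alone does not produce data to which the Sobolev theorem applies. The paper needs a two-step scheme: first Lemma \ref{l:ex-energy}, proved under the extra hypothesis $\rho_0-1\in L^2$ (there the regularised data do lie in $H^\infty$); then the general case, obtained by replacing $\rho_0-1$ with the compactly supported truncations $\eta_k\,(\rho_0-1)$ and running a second compactness argument. Your single regularisation step skips this entirely, so the existence proof does not get off the ground for general data. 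Finally, on uniqueness: running a Gr\"onwall estimate ``along \eqref{eq:odd-Els}'' for the difference of two arbitrary solutions of \eqref{eq:odd} presupposes that \emph{every} solution with the stated regularity solves \eqref{eq:odd-Els}; this is Lemma \ref{l:odd-to-Els} in the paper and does not follow from your construction, which yields the Els\"asser identities only for the solution you built. Note also that the paper's stability estimate (Proposition \ref{p:stab-Els}) is an $L^2$ energy estimate --- exploiting $u\in L^2$ and proving that the differences $\de\rho$ and $\de W_\eff$ lie in $C\big([0,T];L^2\big)$ --- rather than an estimate ``one derivative below'' in the $L^\infty$-based scale, which would again run into the low-regularity pressure difficulties you would need to resolve separately.
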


As claimed above, we now investigate some properties related to the existence time of the solutions constructed in Theorem \ref{th:wp-inf}.
First of all, we establish a blow-up criterion, which simplifies the one proposed in \cite{F-GB-S}, even though it requires a control on the full Hessian
matrix of $\rho$ instead of $\Delta\rho$ only.
In addition, we establish an explicit lower bound for the lifespan, implying that, for $\rho_0\approx1$, the solutions tend to be defined on larger and larger
time intervals. However, we have to remark that, owing to a loss of one derivative appearing in the estimates
involved in the proof of the result, we are able to state such a lower bound only for more regular initial data.

\begin{thm} \label{t:lifespan}
Let $\big(\rho_0,u_0\big)\in L^\infty(\R^2)\times L^2(\R^2)$ be an initial datum
satisfying the assumptions fixed in Theorem \ref{th:wp-inf}:
\begin{itemize}
 \item there exists $\rho_*>0$ such that $\rho_0\geq\rho_*$ over $\R^2$;
 \item $u_0\in B^s_{\infty,1}$, with $\div u_0=0$, and $\rho_0\in B^{s+1}_{\infty,1}$,
\end{itemize}
with the couple $(s,r)$ verifying one of the two conditions appearing in \eqref{eq:Lip-cond}.

Let the triplet $\big(\rho,u,\nabla\Pi\big)$
be the related solution to system \eqref{eq:odd}, as given by Theorem \ref{th:wp-inf}.
Let $T^*>0$ be\footnote{Observe that, by uniqueness, the maximal time of existence is well-defined.} its lifespan.
If $T^*<+\infty$, then 
\begin{equation} \label{eq:blow-up}
\int^{T^*}_0\Big(\left\|\nabla u(t)\right\|^{5/2}_{L^\infty}\,+\,\left\|\nabla^2\rho(t)\right\|^{5/2}_{L^\infty}\Big)\,\dt\,=\,+\,\infty\,.
\end{equation}

In addition, assume that $s>2$ and $r\in[1,+\infty]$, or $s=2$ and $r=1$.
Denote by $\o_0\,:=\,\curl(u_0)$ the vorticity of $u_0$.
Then, there exists a constant $K>0$, depending only on\footnote{Remark that the quantities $\rho_*$, $\rho^*$ and $\|u_0\|_{L^2}$
are preserved by the flow, see estimates \eqref{est:rho-inf} and \eqref{est:u-L^2} below.}
the triplet $\left(\rho_*,\rho^*,\|u_0\|_{L^2}\right)$ but not on the higher regularity norms of the initial datum, such that
\begin{align*}
 T^*\,&\geq\,\frac{K}{1\,+\,\left\|\left(\o_0,\Delta\log\rho_0\right)\right\|_{B^1_{\infty,1}}} \\
&\qquad\qquad\qquad
\times\,
\left[\log\left(1\,+\,\frac{K}{\left(1\,+\,\left\|\left(\o_0,\Delta\log\rho_0\right)\right\|_{B^1_{\infty,1}}\right)^3}\,\bullet\,\right)\right]^{\bigcirc4}\left(\frac{1}{\left\|\nabla\rho_0\right\|_{B^1_{\infty,1}}}\right)\,,
\end{align*}
where we have denoted by $\big[\log(1+C\,\bullet\,)\big]^{\bigcirc n}$ the $n$-th iterated logarithm function $z\mapsto \log(1+Cz)$.
\end{thm}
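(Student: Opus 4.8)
The plan is to establish the two assertions separately, using the a priori estimates of Section \ref{s:a-priori} together with the Els\"asser structure \eqref{eq:odd-Els}. For the blow-up criterion \eqref{eq:blow-up} I would argue by contradiction through the standard continuation principle: those a priori bounds control $\|u(t)\|_{B^s_{\infty,r}}$ and $\|\rho(t)-1\|_{B^{s+1}_{\infty,r}}$ by a Gronwall inequality whose integrating factor is driven by the time integral of the Lipschitz-type quantity $\|\nabla u\|_{L^\infty}+\|\nabla^2\rho\|_{L^\infty}$, the exponent $5/2$ being produced by the interpolation needed to absorb the pressure and coupling terms into the transport estimates. If the integral in \eqref{eq:blow-up} were finite, all these norms would stay bounded up to $t=T^*$; restarting the Cauchy problem from a time close to $T^*$ and gluing would then extend the solution past $T^*$, contradicting maximality.

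For the lifespan bound the guiding principle, as in \cite{D-F} and \cite{Cobb-F}, is that for $\rho_0\approx1$ system \eqref{eq:odd} is a small perturbation of the $2$-D incompressible Euler equations, whose vorticity is merely transported. Taking the $\curl$ of the two momentum equations in \eqref{eq:odd-Els} yields transport equations for $\o=\curl u$ (advected by $W_\eff$) and for $\curl W_\eff=\o-2\nu_0\Delta\log\rho$ (advected by $u$). The decisive algebraic fact is that the pure Euler self-interaction cancels: writing $W_\eff=u-2b$ with $b=\nu_0\nabla^\perp\log\rho$, the quadratic term $\curl(u\cdot\nabla u)-u\cdot\nabla\o$ vanishes identically in $2$-D by the divergence-free constraint, so that both source terms---together with the now admissible pressure contribution $\nabla^\perp(1/\rho)\cdot\nabla\Pi^0$, controlled since $\nabla\Pi^0\in B^s_{\infty,r}$---are \emph{proportional to the small field $b$}. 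Since $b$ is inherited from the mass equation and is initially of size $O(\|\nabla\rho_0\|_{B^1_{\infty,1}})$, the vorticities evolve nearly freely on a long time interval.

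I would then bootstrap on $V(t):=\int_0^t\big(\|\nabla u\|_{L^\infty}+\|\nabla W_\eff\|_{L^\infty}\big)\,d\tau$. Vishik's improved transport estimate \cite{Vis} bounds $\|\o\|_{B^0_{\infty,1}}$ and $\|\curl W_\eff\|_{B^0_{\infty,1}}$ by the data plus the source norms, multiplied only by the \emph{linear} factor $(1+V)$---this is the gain over the exponential factor of hyperbolic theory. The Biot--Savart law closes the loop through $\|\nabla u\|_{L^\infty}\lesssim\|\o\|_{B^0_{\infty,1}}+\|u\|_{L^2}$ and its analogue for $W_\eff$, while $\|b\|_{B^1_{\infty,1}}$ obeys its own transport estimate and grows at most exponentially in $V$. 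Inserting the smallness of $b$ into the source norms, and tracking how the preserved quantities $\rho_*$, $\rho^*$, $\|u_0\|_{L^2}$ enter, I obtain a closed differential inequality for $V$ in which the dangerous nonlinearity is weighted by $\|\nabla\rho_0\|_{B^1_{\infty,1}}$; integrating it and inverting the successive exponential bounds coming from the positive-regularity $B^1_{\infty,1}$ layer produces the four-fold iterated logarithm, evaluated at $1/\|\nabla\rho_0\|_{B^1_{\infty,1}}$, with prefactor governed by $\|(\o_0,\Delta\log\rho_0)\|_{B^1_{\infty,1}}$.

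The main obstacle is precisely this last step. The loss of one derivative in the quadratic source terms $Q(b,u)$ (the reason the sharp lifespan requires $s>2$) means that a naive Gronwall argument closes only with an exponential factor, which would collapse the iterated logarithms to the single logarithm of standard hyperbolic theory. The art is to organize the estimates hierarchically so that the derivative-losing, positive-regularity part grows at most exponentially in $V$ while $V$ itself, governed by the endpoint Vishik bound and the smallness of $b$, grows only like iterated logarithms of $1/\|\nabla\rho_0\|_{B^1_{\infty,1}}$; checking that the constant $K$ can be chosen depending only on $(\rho_*,\rho^*,\|u_0\|_{L^2})$, and that the smallness genuinely survives each logarithmic layer, is the delicate part of the argument.
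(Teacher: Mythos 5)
Your proposal is correct and follows essentially the same route as the paper: the blow-up criterion is obtained from the continuation criterion (Lemma \ref{l:cont-crit}) coming out of the \tsl{a priori} estimates, and the lifespan bound rests on the Els\"asser vorticity system (whose source terms, after the skew-symmetry cancellation $\mc L(\nabla f,\nabla f)=0$, are proportional to derivatives of $\nabla\log\rho$), on Vishik's linear-growth transport estimate at the $B^0_{\infty,1}$ level, on a two-tier low/high norm bootstrap to absorb the one-derivative loss (whence the restriction $s>2$), and on inverting the resulting nested exponential bounds into the four-fold iterated logarithm. This is exactly the structure of the paper's argument in Subsections \ref{ss:end-est} and \ref{ss:lifespan}, including the hierarchy you describe: the $B^1_{\infty,1}$ quantities grow exponentially in time integrals of lower-order energies, while the endpoint energy, weighted by $\left\|\nabla\rho_0\right\|_{B^1_{\infty,1}}$, is controlled linearly via Vishik's estimate up to the bootstrap time.
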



The rest of the paper is devoted to the proof of the previous theorems.
In the next section we derive \tsl{a priori} estimates for (supposed to exist) smooth solutions to system \eqref{eq:odd} in our functional framework.
From those bounds, we will also infer a continuation criterion, which in particular implies the blow-up criterion \eqref{eq:blow-up}.
The rigorous proof of the existence and uniqueness of solutions in the considered functional framework is postponed to Section \ref{s:proof-inf}:
this will prove Theorem \ref{th:wp-inf}.
In the same section, we will also show how to obtain the claimed lower bound on the lifespan of the solutions, thus completing
the proof to Theorem \ref{t:lifespan}.

\section{\tsl{A priori} estimates} \label{s:a-priori}

Assume to have a smooth solution $\big(\rho,u,\nabla\Pi\big)$ to system \eqref{eq:odd}, defined on $\R_+\times\R^2$. Let us show \tsl{a priori} estimates
for this solution.
In a first time, in Subsection \ref{ss:L^p} we establish basic bounds on Lebesgue norms. In Subsections \ref{ss:high} and \ref{ss:est_non-lin}, instead,
we see how to control high regularity norms. In Subsection \ref{ss:end-est} we collect all the bounds and derive uniform estimates
on some (possibly small) time interval $[0,T]$.

\subsection{Propagation of $L^p$ norms} \label{ss:L^p}

Our first goal is to look at estimates which rely on energy conservation properties of the system and on properties of transport equations
by a divergence-free vector field.

We start by considering the mass equation in system \eqref{eq:odd}: from it and from the assumptions made on $\rho_0$, we immediately deduce that
\begin{equation} \label{est:rho-inf}
\forall\,(t,x)\in\R_+\times\R^2\,,\qquad\qquad 0\,<\,\rho_*\,\leq\,\rho(t,x)\,\leq\,\rho^*\,,
\end{equation}
where we recall that $\rho^*\,:=\,\sup\rho_0$ has been defined as the maximum value of $\rho_0$. In addition, still from the mass equation we see that any
sufficiently regular scalar function of the density $\Phi=\Phi(\rho)$ still verifies the transport equation
\[
 \d_t\Phi(\rho)\,+\,u\cdot\nabla\Phi(\rho)\,=\,0\,,
\]
related to the initial datum $\Phi(\rho)_{|t=0}\,=\,\Phi(\rho_0)$.
This relation in particular implies that the bounds \eqref{est:rho-inf} hold true also for the function $\Phi(\rho)$:
\begin{equation} \label{est:Phi-inf}
\forall\,(t,x)\in\R_+\times\R^2\,,\qquad\qquad \inf_{\R^2}\Phi\circ\rho_0\,\leq\,\Phi\big(\rho(t,x)\big)\,\leq\,\sup_{\R^2}\Phi\circ\rho_0\,,
\end{equation}

Next, we perform an energy estimate on the momentum equation. Thus, we multiply the second equation appearing in \eqref{eq:odd} by $u$ and integrate over $\R^2$.
Observe that the odd viscosity tensor is skew-symmetric with respect to the $L^2$ scalar product, namely one has
\[
\int_{\R^2}\,\div\Big(\rho\,\left(\nabla u^\perp\,+\,\nabla^\perp u\right)\Big)\cdot u\,\dx\,=\,0\,.
\]
Thus, standard computations, based on integration by parts and the use of the mass equation, together with the orthogonality condition
between $\nabla\Pi$ and $u$, yield
\[
 \frac{1}{2}\,\frac{\rm d}{\dt}\int_{\R^2}\rho\,\left|u\right|^2\,\dx\,=\,0\,.
\]
Integrating the previous relation in time and using \eqref{est:rho-inf},
we easily find
\begin{equation} \label{est:u-L^2}
\forall\,t\,\geq\,0\,,\qquad\qquad \left\|u(t)\right\|_{L^2}\,\lesssim\,\left\|u_0\right\|_{L^2}\,.
\end{equation}

This having been established, we turn our attention on the propagation of high regularity norms.
Observe that, by writing any divergence-free vector field $f$ as 
\begin{equation} \label{eq:split-l-h}
f=\Delta_{-1}f+(\Id-\Delta_{-1})f\,,
\end{equation}
we deduce the following property:
for any $s\in\R$ and any $(p,r)\in[1,+\infty]\times[1,+\infty]$, one has
\[
 \left\|f\right\|_{B^s_{\infty,r}}\,\lesssim\,\|f\|_{L^p}\,+\,\left\|\big(\Id-\Delta_{-1}\big)\curl(f)\right\|_{B^{s-1}_{\infty,r}}\,,
\]
where we have defined the $\curl$ operator in two dimensions as $\curl(v)\,=\,\d_1v_2\,-\,\d_2v_1$ for any vector field $v\in\R^2$.
In fact, the previous inequality remains true in any dimension $d\geq2$, up to take define $\curl(v)=Dv-\nabla v$.

The previous inequality applies of course to $f=u$ with $p=2$, recall estimate \eqref{est:u-L^2} above.
In the case of $f=W_\eff$, the argument is similar, as we can bound
\begin{align}
\label{est:W_eff-Besov}
\left\|W_\eff\right\|_{B^s_{\infty,r}}\,&\lesssim\,\left\|\Delta_{-1}\big(u\,-\,2\nu_0\nabla^\perp\log\rho\big)\right\|_{L^\infty}\,+\,
\left\|\big(\Id-\Delta_{-1}\big)\curl\big(W_\eff\big)\right\|_{B^{s-1}_{\infty,r}} \\
\nonumber
&\lesssim\,\left\|u\right\|_{L^2}\,+\,\left\|\log\rho\right\|_{L^\infty}\,+\,\left\|\curl\big(W_\eff\big)\right\|_{B^{s-1}_{\infty,r}}\,.
\end{align}

Therefore, in light of inequalities \eqref{est:u-L^2} and \eqref{est:Phi-inf}, for bounding the $B^s_{\infty,r}$ norm of the solution it is
enough to bound the $B^{s-1}_{\infty,r}$ norm of the high frequencies $\Id-\Delta_{-1}$ of the $\curl$.
Establishing those bounds is the goal of the next paragraph.

\subsection{High frequency analysis} \label{ss:high}

Here we study the propagation of high regularity norms of the solution. We use two main ingredients: first of all, we work with the Els\"asser
formulation \eqref{eq:odd-Els} of the odd viscosity system; in addition, as anticipated above, we pass to the vorticity formulation of those equations
and study propagation of the vorticities in the less regular space $B^{s-1}_{\infty,r}$.
The drawback of this approach is that, in the endpoint case $s=r=1$, the space $B^0_{\infty,1}$ is no more a Banach algebra, a fact which obviously complicates
the analysis of the non-linear terms.

\medbreak
To begin with, let us introduce the two vorticities
\[
\o\,:=\,\curl(u) 
\qquad\qquad \mbox{ and }\qquad\qquad
\z_\eff\,:=\,\curl\big(W_\eff\big)\,. 
\]
Now, owing to \eqref{est:rho-inf}, we can divide the second and third equations in \eqref{eq:odd-Els} by $\rho$; then, we apply the $\curl$ operator
and find
\[ 
\left\{ \begin{array}{l}
         \d_t\o\,+\,W_\eff\cdot\nabla \o\,=\,\nabla^\perp\left(\dfrac{1}{\rho}\right)\cdot\nabla\Pi^0\,+\,\mc L\big(\nabla u,\nabla W_\eff\big) \\[1ex]
         \d_t\z_\eff\,+\,u\cdot\nabla \z_\eff\,=\,\nabla^\perp\left(\dfrac{1}{\rho}\right)\cdot\nabla\Pi^0\,+\,\mc L\big(\nabla W_\eff,\nabla u\big)\,,
        \end{array}
\right.
\] 
where, for $f$ and $g$ two divergence-free vector fields, we have defined
\[
\mc L\big(\nabla f,\nabla g\big)\,:=\,\d_1f_1\,\big(\d_1g_{2}\,+\,\d_2g_{1}\big)\,+\,\d_2g_2\,\big(\d_1f_2\,+\,\d_2f_1\big)\,.
\]
Notice that, as both $f$ and $g$ are divergence-free vector fields, $\mc L$ is a skew-symmetric operator. Thus, we have
\begin{align*}
 \mc L\big(\nabla g,\nabla f\big)\,&=\,-\,\mc L\big(\nabla f,\nabla g\big) \qquad\qquad \mbox{ and }\qquad\qquad
\mc L\big(\nabla f,\nabla g\big)\,=\,\mc L\big(\nabla f,\nabla(g-f)\big)\,. 
\end{align*}
Using those relations in the previous system, straightforward computations yield
\begin{equation} \label{eq:syst-vort}
\left\{ \begin{array}{l}
         \d_t\o\,+\,W_\eff\cdot\nabla \o\,=\,\nabla^\perp\left(\dfrac{1}{\rho}\right)\cdot\nabla\Pi^0\,-\,2\,\nu_0\,\mc B\big(\nabla u,\nabla^2\log\rho\big) \\[1ex]
         \d_t\z_\eff\,+\,u\cdot\nabla \z_\eff\,=\,\nabla^\perp\left(\dfrac{1}{\rho}\right)\cdot\nabla\Pi^0\,+\,2\,\nu_0\,\mc B\big(\nabla u,\nabla^2\log\rho\big)\,,
        \end{array}
\right.
\end{equation}
where the bilinear operator $\mc B$ is exactly the same operator introduced in \cite{F-GB-S}, namely
\[
\mc B\big(\nabla u,\nabla^2\alpha\big)\,:=\,\d_1u_1\,\big(\d_1^2\alpha\,-\,\d_2^2\alpha\big)\,+\,\d_1\d_2\alpha\,\big(\d_1u_2\,+\,\d_2u_1\big)
\]
for any scalar function $\alpha$. Indeed, it immediately follows from the definitions that
\begin{align*}
\mc L\big(\nabla u,\nabla\nabla^\perp\alpha\big)\,=\,\d_1u_1\,\big(\d_1^2\alpha\,-\,\d_2^2\alpha\big)\,+\,\d_1\d_2\alpha\,\big(\d_1u_2\,+\,\d_2u_1\big)\,=\,
\mc B\big(\nabla u,\nabla^2\alpha\big)\,.
\end{align*}

Our goal is to estimate the $B^{s-1}_{\infty,r}$ norm of the two vorticity functions, hence the $L^\infty$ norms of $\Delta_j\o$ and $\Delta_j\z_\eff$,
for any $j\geq-1$.
Owing to the symmetry of equations \eqref{eq:syst-vort}, let us focus on the equation for $\o$ only: applying operator $\Delta_j$, we find
\begin{align*}
\Big(\d_t\,+\,W_\eff\cdot\nabla\Big)\Delta_j\o\,=\,\Delta_j\left(\nabla^\perp\left(\frac{1}{\rho}\right)\cdot\nabla\Pi^0\right)\,-\,
2\,\nu_0\,\Delta_j\mc B\big(\nabla u,\nabla^2\log\rho\big)\,+\,\left[W_\eff\cdot\nabla,\Delta_j\right]\o\,.
\end{align*}
We now perform a $L^\infty$ estimate on this equation, multiply the resulting expression by $2^{j(s-1)}$ and sum up over $j\geq-1$:
by making use of Lemma \ref{l:CommBCD}, we obtain, for any $t\geq0$, the estimate
\begin{align*}
\left\|\o(t)\right\|_{B^{s-1}_{\infty,r}}\,&\lesssim\,\left\|\o(0)\right\|_{B^{s-1}_{\infty,r}}\,+\,\int^t_0
\left(\left\|\nabla^\perp\left(\dfrac{1}{\rho}\right)\cdot\nabla\Pi^0\right\|_{B^{s-1}_{\infty,r}}\,+\,
\left\|\mc B\big(\nabla u,\nabla^2\log\rho\big)\right\|_{B^{s-1}_{\infty,r}}\right)\,\dd\t \\
&\qquad\qquad\qquad\qquad\qquad +\,\int^t_0\Big(\left\|\nabla W_\eff\right\|_{L^\infty}\,\left\|\o\right\|_{B^{s-1}_{\infty,r}}\,+\,
\left\|\nabla W_\eff\right\|_{B^{s-1}_{\infty,r}}\,\|\o\|_{L^\infty}\Big)\,\dd\t\,,
\end{align*}
where we have set $\o(0)\,:=\,\curl(u_0)$.
It goes without saying that a similar estimate holds also for $\z_\eff$: if we define $\z_\eff(0)\,:=\,\curl\big(W_{\eff}(0)\big)$, with
$W_\eff(0)\,=\,u_0-2\nu_0\nabla^\perp\log\rho_0$, we have
\begin{align*}
\left\|\z_\eff(t)\right\|_{B^{s-1}_{\infty,r}}\,&\lesssim\,\left\|\z_{\eff}(0)\right\|_{B^{s-1}_{\infty,r}}\,+\,\int^t_0
\left(\left\|\nabla^\perp\left(\dfrac{1}{\rho}\right)\cdot\nabla\Pi^0\right\|_{B^{s-1}_{\infty,r}}\,+\,
\left\|\mc B\big(\nabla u,\nabla^2\log\rho\big)\right\|_{B^{s-1}_{\infty,r}}\right)\,\dd\t \\
&\qquad\qquad\qquad\qquad\qquad +\,\int^t_0\Big(\left\|\nabla u\right\|_{L^\infty}\,\left\|\z_\eff\right\|_{B^{s-1}_{\infty,r}}\,+\,
\left\|\nabla u\right\|_{B^{s-1}_{\infty,r}}\,\|\z_\eff\|_{L^\infty}\Big)\,\dd\t\,.
\end{align*}

Define now, for any $t\geq0$, the energy of the solution as
\begin{equation} \label{def:E_s}
E_s(t)\,:=\,\left\|u(t)\right\|_{L^2}\,+\,\left\|\rho(t)\right\|_{L^\infty}\,+\,\left\|\o(t)\right\|_{B^{s-1}_{\infty,r}}\,+\,
\left\|\z_\eff(t)\right\|_{B^{s-1}_{\infty,r}}\,,
\end{equation}
where we agree that, for $t=0$, the quantity $E_s(0)$ is defined analogously, but in terms of the initial datum $\big(\rho_0,u_0\big)$.
Then, putting together \eqref{est:rho-inf}, \eqref{est:u-L^2} and the previous inequalities for the Besov norms of $\o$ and $\z$, we get,
for any $t\geq0$, the bound
\begin{align}
\label{est:E_first}
E_s(t)\,&\lesssim\,E_s(0)\,+\,\int^t_0\Big(\left\|\nabla u\right\|_{L^\infty}\,+\,\left\|\nabla^2\rho\right\|_{L^\infty}\Big)\,E_s(\t)\,\dd\t \\
\nonumber
&\qquad\qquad +\,\int^t_0
\left(\left\|\nabla^\perp\left(\dfrac{1}{\rho}\right)\cdot\nabla\Pi^0\right\|_{B^{s-1}_{\infty,r}}\,+\,
\left\|\mc B\big(\nabla u,\nabla^2\log\rho\big)\right\|_{B^{s-1}_{\infty,r}}\right)\,\dd\t\,,
\end{align}
where the (implicit) multiplicative constant may depend also on the value of the kinematic odd viscosity coefficient $\nu_0\neq0$.

\subsection{Estimates of the non-linear terms} \label{ss:est_non-lin}

It remains us to estimate the non-linear terms appearing in the last line of inequality \eqref{est:E_first}. We start by considering the pressure term,
whose analysis is more involved. After that, we pass to the bounds for the bilinear operator $\mc B$.

\subsubsection{Estimates for the pressure} \label{sss:pressure}

We start by bounding the $B^{s-1}_{\infty,r}$ norm of the term involving the modified pressure $\nabla\Pi^0$. 
In the case $s>1$, we can directly use Corollary \ref{c:tame} to infer
\begin{align}
\label{est:bilin-press}
\left\|\nabla^\perp\left(\dfrac{1}{\rho}\right)\cdot\nabla\Pi^0\right\|_{B^{s-1}_{\infty,r}}\,&\lesssim\,\left\|\nabla\left(\dfrac{1}{\rho}\right)\right\|_{B^{s-1}_{\infty,r}}\,\left\|\nabla\Pi^0\right\|_{L^\infty}\,+\,
\left\|\nabla\left(\dfrac{1}{\rho}\right)\right\|_{L^\infty}\,\left\|\nabla\Pi^0\right\|_{B^{s-1}_{\infty,r}} \\
\nonumber 
&\lesssim\,\left\|\nabla\rho\right\|_{B^{s-1}_{\infty,r}}\,\left\|\nabla\Pi^0\right\|_{L^\infty}\,+\,
\left\|\nabla\rho\right\|_{L^\infty}\,\left\|\nabla\Pi^0\right\|_{B^{s-1}_{\infty,r}}\,,
\end{align}
where we have used Lemma \ref{l:paralin} for passing from the first inequality to the second one. Taking advantage of the commutator structure
\[
\nabla^\perp\left(\frac{1}{\rho}\right)\cdot\nabla\Pi^0\,=\,\curl\left(\frac{1}{\rho}\,\nabla\Pi^0\right)
\]
and repeating the analysis from Subsection 4.2 of \cite{D-F}, we see that the same estimate holds true even in the endpoint case $s=r=1$.
Of course, the previous multiplicative constants may depend on the lower and upper bounds, $\rho_*$ and $\rho^*$ respectively, of the initial density $\rho_0$.

Observe that, owing to Lemma \ref{l:paralin} again, for any $\s\in\,[0,s]$, we have
\begin{align}
\label{est:Drho-W}
\left\|\nabla\rho\right\|_{B^{\s}_{\infty,r}}\,&\lesssim\,\left\|\nabla\log\rho\right\|_{B^{\s}_{\infty,r}}\,\lesssim\,
\left\|W_\eff\right\|_{B^{\s}_{\infty,r}}\,+\,\left\|u\right\|_{B^{\s}_{\infty,r}}\,\lesssim\,E_s\,.
\end{align}

\medbreak
We now focus on the control of the Besov norm $B^{s-1}_{\infty,r}$ of $\nabla\Pi^0$. In fact, we are rather going to bound its $B^s_{\infty,r}$ norm.
We proceed in several steps.

\paragraph*{Step 1: frequency splitting.}
We start by cutting into low and high frequencies the pressure gradient, according to formula \eqref{eq:split-l-h}, and write
\begin{equation} \label{est:press-split}
\left\|\nabla\Pi^0\right\|_{B^s_{\infty,r}}\,\lesssim\,\left\|\nabla\Pi^0\right\|_{L^2}\,+\,\left\|\Delta\Pi^0\right\|_{B^{s-1}_{\infty,r}}\,.
\end{equation}
At this point, observe that we may follow the computations in \cite{D-F} (see Paragraph 3.1.2 therein)
and get, for any $s\geq1$ and $r\in[1,+\infty]$, with $r=1$ if $s=1$,
the inequality
\[
\left\|\nabla\Pi^0\right\|_{B^s_{\infty,r}}\,\lesssim\,
\left(1\,+\,\left\|\nabla\rho\right\|^\g_{B^{s-1}_{\infty,r}}\right)\,\left\|\nabla\Pi^0\right\|_{L^2}\,+\,
\left\|\rho\,\div\big(u\cdot\nabla W_\eff\big)\right\|_{B^{s-1}_{\infty,r}}\,,
\]
for a suitable $\g\in\,]0,1[\,$, depending on $s\geq1$. Nonetheless, this bound is not really suitable for the derivation of a good
continuation criterion.
We need to state more precise estimates, in the same spirit of Lemma 3.4 of \cite{F-GB-S}, paying
attention now that our functional framework is not always subcritical (in particular, when $s=r=1$, we work in a space which is
no more a Banach algebra).

\paragraph*{Step 2: the $L^2$ norm.}
From the third equation in \eqref{eq:odd-Els}, we get an equation for $\nabla\Pi^0$:
\begin{equation} \label{eq:pressure}
-\,\div\left(\frac{1}{\rho}\,\nabla\Pi^0\right)\,=\,\div\Big(u\cdot\nabla W_\eff\Big)\,.
\end{equation}
Applying Lax-Milgram theorem (see Lemma 2 of \cite{D_2010}), from the previous equation we gather
\begin{align}
\label{est:press-L^2}
\left\|\nabla\Pi^0\right\|_{L^2}\,&\lesssim\,\left\|u\right\|_{L^2}\,\left\|\nabla W_\eff\right\|_{L^\infty}\,
\lesssim\,E_s\,,
\end{align}
where we have used \eqref{est:u-L^2}, \eqref{est:W_eff-Besov} and the embedding $B^{s-1}_{\infty,r}\hookrightarrow L^\infty$.

For later use, notice that 
\begin{equation} \label{est:DW_eff}
\left\|\nabla W_\eff\right\|_{L^\infty}\,\lesssim\, \left\|\nabla u\right\|_{L^\infty}\,+\,\left\|\nabla \rho\right\|_{L^\infty}^2\,+\,
\left\|\nabla^2\rho\right\|_{L^\infty}\,.
\end{equation}

\paragraph*{Step 3: the Besov norm of $\Delta\Pi^0$.}
Keeping \eqref{est:press-split}, we now need an equation for $\Delta\Pi^0$.
Starting from \eqref{eq:pressure} again, simple computations yield
\begin{equation} \label{eq:Delta-press}
-\,\Delta\Pi^0\,=\,-\,\nabla\log\rho\cdot\nabla\Pi^0\,+\,\rho\,\nabla u:\nabla W_\eff\,,
\end{equation}
where we have used the fact that
\begin{equation} \label{eq:div-u-W}
 \div\big(u\cdot\nabla W_\eff\big)\,=\,\nabla u:\nabla W_\eff\,,
\end{equation}
which holds true owing to the divergence-free condition over the effective velocity $W_\eff$.

At this point, using Bony's paraproduct decomposition and the previous identity \eqref{eq:div-u-W}, we get the bound
\begin{align*}
\left\|\rho\,\div\big(u\cdot\nabla W_\eff\big)\right\|_{B^{s-1}_{\infty,r}}\,&\lesssim\,\left\|\rho\right\|_{B^{s}_{\infty,r}}\,
\left\|\div\big(u\cdot\nabla W_\eff\big)\right\|_{L^\infty}\,+\,\left\|\rho\right\|_{L^\infty}\,\left\|\div\big(u\cdot\nabla W_\eff\big)\right\|_{B^{s-1}_{\infty,r}} \\
&\lesssim\,
\Big(\rho^*+\left\|\nabla\rho\right\|_{B^{s-1}_{\infty,r}}\Big)\,
\left\|\nabla u\right\|_{L^\infty}\,\left\|\nabla W_\eff\right\|_{L^\infty}\, +\,\left\|\div\big(u\cdot\nabla W_\eff\big)\right\|_{B^{s-1}_{\infty,r}}\,.
\end{align*}
Using the same relation \eqref{eq:div-u-W} together with Bony's decomposition, we see that
\[
\left\|\div\big(u\cdot\nabla W_\eff\big)\right\|_{B^{s-1}_{\infty,r}}\,\lesssim\,\left\|u\right\|_{B^s_{\infty,r}}\,\left\|\nabla W_\eff\right\|_{L^\infty}\,+\,
\left\|\nabla u\right\|_{L^\infty}\,\left\|W_\eff\right\|_{B^s_{\infty,r}}
\]
for any $s>1$ and $r\in[1,+\infty]$ and also (we refer to \cite{D-F} for details) in the endpoint case $s=r=1$.
On the other hand, arguing again as in \cite{D-F} (see Paragraph 3.1.2 therein), for any $s\geq1$ and $r\in[1,+\infty]$, with
the convention that $r=1$ when $s=1$, we can estimate
\begin{align*}
\left\|\nabla\log\rho\cdot\nabla\Pi^0\right\|_{B^{s-1}_{\infty,r}}\,\lesssim\,\left\|\nabla\rho\right\|_{L^\infty}\,\left\|\nabla\Pi^0\right\|_{B^{s-1+\de}_{\infty,r}}\,+\,
\left\|\nabla\log\rho\right\|_{B^{s-1}_{\infty,r}}\,\left\|\nabla\Pi^0\right\|_{L^\infty}\,,
\end{align*}
where $\de>0$ can be taken arbitrarily small.
Observe that, when $s>1$, one can directly take $\de=0$; in other words, the $\de>0$ is needed only for dealing with the critical case $s=r=1$.

Collecting all those inequalities and making use of estimate \eqref{est:Drho-W}, we finally get a bound for the Besov norm of $\Delta\Pi^0$: we have
\begin{align}
\label{est:Delta-Pi^0}
\left\|\Delta \Pi^0\right\|_{B^{s-1}_{\infty,r}}\,&\lesssim\,\left\|\nabla\rho\right\|_{L^\infty}\,\left\|\nabla\Pi^0\right\|_{B^{s-1+\de}_{\infty,r}} \\
\nonumber
&\qquad \,+\,E_s\,\Big(\left\|\nabla\Pi^0\right\|_{L^\infty}\,+\,\left\|\nabla u\right\|_{L^\infty}\,\left\|\nabla W_\eff\right\|_{L^\infty}\,+\,
\left\|\nabla u\right\|_{L^\infty}\,+\,\left\|\nabla W_\eff\right\|_{L^\infty}\Big)\,.
\end{align}

\paragraph*{Step 4: interpolation.}
It is clear that the term $\left\|\nabla\Pi^0\right\|_{B^{s-1+\de}_{\infty,r}}$ appearing in \eqref{est:Delta-Pi^0} is of lower order,
thus it can be absorbed on the left-hand side by interpolation. In this step, we make the interpolation argument precise,
also in order to optimise the exponent appearing on the term depending on $\nabla\rho$.

In dimension $d=2$, owing to Proposition \ref{p:embed}, one has the following chain of continuous embeddings: $L^2\,\hookrightarrow\,B^{-1}_{\infty,2}\,
\hookrightarrow\,B^{-1-\de}_{\infty,r}$ for any $\de>0$ and any $r\in[1,+\infty]$ (actually, $\de=0$ would be enough if $r\geq2$).
As a consequence, if $s>1$, since one has $B^{s-2}_{\infty,r}\,\hookrightarrow\,B^{-1}_{\infty,2}$,
we can use the first embedding $L^2\,\hookrightarrow\,B^{-1}_{\infty,2}$ to estimate
\[
\left\|f\right\|_{B^{s-1}_{\infty,r}}\,\lesssim\,\left\|f\right\|^{1/2}_{B^{s-2}_{\infty,r}}\,\left\|f\right\|^{1/2}_{B^{s}_{\infty,r}}
\lesssim\,\left\|f\right\|^{1/2}_{L^2}\,\left\|f\right\|^{1/2}_{B^{s}_{\infty,r}}\,.
\]
For later use, in this case we set $\alpha(s)=1/2$ and we observe that $1/\big(1-\alpha(s)\big)=2$.

In the case $s=1$, instead, we need to resort to the second embedding $L^2\,\hookrightarrow\,B^{-1-\de}_{\infty,r}$ and to use a more refined interpolation
argument. More precisely, for $\de>0$ arbitrarily small, we can write
\[
\left\|f\right\|_{B^{s-1+\de}_{\infty,r}}\,\lesssim\,\left\|f\right\|^{1-\alpha}_{B^{-1-\de}_{\infty,r}}\,\left\|f\right\|^\alpha_{B^{s}_{\infty,r}}
\lesssim\,\left\|f\right\|^{1-\alpha}_{L^2}\,\left\|f\right\|^\alpha_{B^{s}_{\infty,r}}\,,
\]
where $\alpha\,=\,\alpha(s,\de)\,=\,(s+2\de)/(s+1+\de)$.
We immediately observe that
\[
\frac{1}{1-\alpha(s,\de)}\,=\,\frac{s+1+\de}{1-\de}\,,
\]
which can be made as close to $s+1=2$ as one needs.

\paragraph*{Step 5: bound for the pressure gradient.}
This having been observed, we are ready to establish a first bound for the Besov norm of $\nabla\Pi^0$.
The starting point is given by the previously established inequalities \eqref{est:press-L^2} and \eqref{est:Delta-Pi^0}.
We recall that, in \eqref{est:Delta-Pi^0}, one has $\de=0$ if $s>1$, whereas $\de>0$ arbitrarily small will be fixed later in case $s=1$.

Inserting those bounds into \eqref{est:press-split} and using the previously established interpolation inequalities, we infer the estimate
\begin{align*} 
 \left\|\nabla\Pi^0\right\|_{L^2\cap B^s_{\infty,r}}\,&\lesssim\,E_s\,\Big(1\,+\,\left\|\nabla\rho\right\|_{L^\infty}^\g\,+\,\left\|\nabla\Pi^0\right\|_{L^\infty}\,+\,
\left\|\nabla u\right\|_{L^\infty}^2\,+\,\left\|\nabla W_\eff\right\|^2_{L^\infty}\Big) \\
&\lesssim\,
E_s\,\Big(1\,+\,\left\|\nabla\rho\right\|_{L^\infty}^\g\,+\,\left\|\nabla\Pi^0\right\|_{L^\infty}\,+\,\left\|\big(\nabla u,\nabla^2\rho\big)\right\|^2_{L^\infty}
\,+\,\left\|\nabla\rho\right\|^4_{L^\infty}\Big)\,,
\end{align*} 
where the exponent $\g$ is defined as $\g\,:=\,1/(1-\alpha)$ and where we have also used \eqref{est:DW_eff} in order to pass from
the first inequality to the second one.
Observing that $\g\leq4$ always (up to take $\de>0$ small enough when $s=1$), we finally deduce the bound
\begin{equation} \label{est:D-Pi^0-Besov}
 \left\|\nabla\Pi^0\right\|_{L^2\cap B^s_{\infty,r}}\,\lesssim\,
E_s\,\Big(1\,+\,\left\|\nabla\rho\right\|_{L^\infty}^4\,+\,\left\|\nabla\Pi^0\right\|_{L^\infty}\,+\,\left\|\big(\nabla u,\nabla^2\rho\big)\right\|^2_{L^\infty}\Big)\,.
\end{equation}
Observe that the implicit multiplicative constant only depends on $\rho_*$, $\rho^*$ and $\|u_0\|_{L^2}$.

\paragraph*{Step 6: $L^\infty$ control of the pressure.}
Our next goal is to remove the presence of the term $\left\|\nabla\Pi^0\right\|_{L^\infty}$ in the previous bound.
We start by writing, for a suitable $\alpha\in\,]0,1[\,$, the Gagliardo-type interpolation inequality
\begin{equation} \label{est:starting}
\left\|\nabla\Pi^0\right\|_{L^\infty}\,\lesssim\,\left\|\nabla\Pi^0\right\|_{L^2}^{1/2}\,\left\|\Delta\Pi^0\right\|_{L^\infty}^{1/2}\,
\lesssim\,\left\|\nabla\Pi^0\right\|_{L^2}\,+\,\left\|\Delta\Pi^0\right\|_{L^\infty}\,,
\end{equation}
which holds true in dimension $d=2$.
The proof of the previous inequality is based on a decomposition into low and high frequencies which is rather standard;
we refer to the Appendix of \cite{Cobb-F_2021} for similar results.

Next, we use equation \eqref{eq:Delta-press} to bound
\[
\left\|\Delta\Pi^0\right\|_{L^\infty}\,\lesssim\,\left\|\nabla\rho\right\|_{L^\infty}\,\left\|\nabla\Pi^0\right\|_{L^\infty}\,+\,
\left\|\nabla u\right\|_{L^\infty}\,\left\|\nabla W_\eff\right\|_{L^\infty}\,,
\]
where the implicit multiplicative constant depends also on $\rho_*$ and $\rho^*$.
Applying the interpolation inequality \eqref{est:starting} again, we gather
\[
\left\|\Delta\Pi^0\right\|_{L^\infty}\,\lesssim\,\left\|\nabla\rho\right\|^{2}_{L^\infty}\,\left\|\nabla\Pi^0\right\|_{L^2}\,+\,
\left\|\nabla u\right\|_{L^\infty}\,\left\|\nabla W_\eff\right\|_{L^\infty}\,.
\]
Thus, plugging the previous estimate into \eqref{est:starting} and using inequalities \eqref{est:press-L^2} and \eqref{est:u-L^2}, we obtain
\begin{align} \label{est:Pi_inf}
\left\|\nabla\Pi^0\right\|_{L^\infty}\,&\lesssim\,\Big(1\,+\,\left\|\nabla\rho\right\|^{2}_{L^\infty}\Big)\,\left\|\nabla\Pi^0\right\|_{L^2}\,+\,
\left\|\nabla u\right\|_{L^\infty}\,\left\|\nabla W_\eff\right\|_{L^\infty} \\
\nonumber
&\lesssim\,\Big(1\,+\,\left\|\nabla\rho\right\|^{2}_{L^\infty}\,+\,
\left\|\nabla u\right\|_{L^\infty}\Big)\,\left\|\nabla W_\eff\right\|_{L^\infty} \\
\nonumber
&\lesssim\,1\,+\,\left\|\nabla\rho\right\|^{4}_{L^\infty}\,+\,
\left\|\nabla u\right\|^2_{L^\infty}\,+\,\left\|\nabla^2\rho\right\|^2_{L^\infty}\,,
\end{align}
where we have also used \eqref{est:DW_eff}. 

\paragraph*{Step 7: final estimate.}
It is now time to plug estimate \eqref{est:D-Pi^0-Besov} into \eqref{est:bilin-press}: we find
\begin{align*}
&\left\|\nabla^\perp\left(\dfrac{1}{\rho}\right)\cdot\nabla\Pi^0\right\|_{B^{s-1}_{\infty,r}} \\
&\qquad\qquad \lesssim\,
E_s\,\bigg(\left\|\nabla\Pi^0\right\|_{L^\infty}\,+\,\left\|\nabla\rho\right\|_{L^\infty}\,
\Big(1\,+\,\left\|\nabla\rho\right\|_{L^\infty}^4\,+\,\left\|\nabla\Pi^0\right\|_{L^\infty}\,+\,\left\|\big(\nabla u,\nabla^2\rho\big)\right\|^2_{L^\infty}\Big)\bigg)\,.
\end{align*}
Using \eqref{est:Pi_inf} in the previous bound finally yields
\begin{align}
\label{est:press-term}
&\left\|\nabla^\perp\left(\dfrac{1}{\rho}\right)\cdot\nabla\Pi^0\right\|_{B^{s-1}_{\infty,r}} \\
\nonumber
&\qquad\qquad\qquad \lesssim\,
E_s\,\Big(1\,+\,\left\|\nabla\rho\right\|_{L^\infty}^5\,+\,
\left\|\big(\nabla u,\nabla^2\rho\big)\right\|^2_{L^\infty}\,+\,
\left\|\nabla\rho\right\|_{L^\infty}\,\left\|\big(\nabla u,\nabla^2\rho\big)\right\|^2_{L^\infty}
\Big) \\
\nonumber
&\qquad\qquad\qquad \lesssim\,
E_s\,\Big(1\,+\,\left\|\nabla\rho\right\|_{L^\infty}^5\,+\,\left\|\nabla u\right\|^{5/2}_{L^\infty}\,+\,\left\|\nabla^2\rho\right\|^{5/2}_{L^\infty}\Big)\,,
\end{align}
where we have also applied the Young inequality to pass from the first to the second inequality.

\subsubsection{Estimates for the bilinear term} \label{sss:bilinear}
Now, we focus on the estimate of the bilinear term
\[
\mc B\big(\nabla u,\nabla^2\log\rho\big)\,=\,\d_1u_1\,\big(\d_1^2\log\rho\,-\,\d_2^2\log\rho\big)\,+\,\d_1\d_2\log\rho\,\big(\d_1u_2\,+\,\d_2u_1\big)
\]
in the space $B^{s-1}_{\infty,r}$. As already observed, we have the equality
\[
\mc B\big(\nabla u,\nabla^2\log\rho\big)\,=\,\mc L\big(\nabla u,\nabla\nabla^\perp\log\rho\big)\,.
\]
Thus, we can use Lemma 5.6 of \cite{Cobb-F} to get, even in the endpoint case $s=r=1$, the bound
\begin{align*}
\left\|\mc B\big(\nabla u,\nabla^2\log\rho\big)\right\|_{B^{s-1}_{\infty,r}}\,&\lesssim\,
\left\|\nabla u\right\|_{L^\infty}\,\left\|\nabla^\perp\log\rho\right\|_{B^{s}_{\infty,r}}\,+\,
\left\|\nabla\nabla^\perp\log\rho\right\|_{L^\infty}\,\left\|u\right\|_{B^{s}_{\infty,r}} \\
&\lesssim\,
\left\|\nabla u\right\|_{L^\infty}\,\left\|\nabla^\perp\log\rho\right\|_{B^{s}_{\infty,r}}\,+\,
\Big(\left\|\nabla\rho\right\|^2_{L^\infty}\,+\,\left\|\nabla^2\rho\right\|_{L^\infty}\Big)\,\left\|u\right\|_{B^{s}_{\infty,r}}\,.
\end{align*}
By triangular inequality, we easily gather that $\left\|\nabla^\perp\log\rho\right\|_{B^{s}_{\infty,r}}\,\leq\,2\,E_s$, hence
we deduce
\begin{align}
\label{est:B}
\left\|\mc B\big(\nabla u,\nabla^2\log\rho\big)\right\|_{B^{s-1}_{\infty,r}}\,&\lesssim\,
\Big(\left\|\nabla u\right\|_{L^\infty}\,+\,\left\|\nabla\rho\right\|^2_{L^\infty}\,+\,\left\|\nabla^2\rho\right\|_{L^\infty}\Big)\,E_s\,.
\end{align}

\subsection{Closing the estimates} \label{ss:end-est}

At this point, we can insert inequalities \eqref{est:press-term} and \eqref{est:B} into \eqref{est:E_first}: we obtain
\begin{equation} \label{est:E_second}
\forall\,t\geq0\,,\qquad\qquad
E_s(t)\,\lesssim\,E_s(0)\,+\,\int^t_0A(\t)\,E_s(\t)\,\dd\t\,,
\end{equation}
where we have set
\[
A(t)\,:=\,1\,+\,\left\|\nabla\rho\right\|_{L^\infty}^5\,+\,\left\|\nabla u\right\|^{5/2}_{L^\infty}\,+\,\left\|\nabla^2\rho\right\|^{5/2}_{L^\infty}\,.
\]

Observe that, by using the interpolation inequality
\begin{equation} \label{est:interp-rho}
 \left\|\nabla\rho\right\|^2_{L^\infty}\,\lesssim\,\left\|\rho\right\|_{L^\infty}\,\left\|\nabla^2\rho\right\|_{L^\infty}\,=\,
\rho^*\,\left\|\nabla^2\rho\right\|_{L^\infty}\,,
\end{equation}
we can bound
\begin{align*}
 A(t)\,&\lesssim\,1\,+\,\left\|\nabla^2\rho\right\|_{L^\infty}^{5/2}\,+\,\left\|\nabla u\right\|_{L^\infty}^{5/2}\,.
\end{align*}
From inequality \eqref{est:E_second} and the previous estimate, a classical argument (omitted here for simplicity)
allows us to deduce the following continuation criterion.

\begin{lemma} \label{l:cont-crit}
Let $\big(\rho_0,u_0\big)$ be initial data satisfying the assumptions fixed in Theorem \ref{th:wp-inf}. Let $(\rho,u,\nabla\Pi)$
be a solution to system \eqref{eq:odd} emanating from $\big(\rho_0,u_0\big)$,
defined on $[0,T^*[\,\times\R^2$ (for a suitable time $T^*>0$) and satisfying the regularity conditions stated in Theorem \ref{th:wp-inf}.

Assume that
\[
\int^{T^*}_0\left(\left\|\nabla^2\rho(t)\right\|_{L^\infty}^{5/2}\,+\,\left\|\nabla u(t)\right\|_{L^\infty}^{5/2}\right)\,\dt\,<\,+\infty\,.
\]
Then, the solution $\big(\rho,u,\nabla\Pi\big)$ can be continued beyond the time $T^*$ into a solution of system \eqref{eq:odd} enjoying
the same regularity properties.
\end{lemma}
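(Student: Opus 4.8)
The plan is to turn the integral inequality \eqref{est:E_second} into a uniform bound on the energy $E_s$ over the whole interval $[0,T^*[$ by a Grönwall argument, and then to use that bound to construct a limiting datum at $t=T^*$ from which Theorem \ref{th:wp-inf} can be re-applied. First I would observe that, thanks to the interpolation inequality \eqref{est:interp-rho}, the function $A$ appearing in \eqref{est:E_second} satisfies $A(t)\lesssim 1+\left\|\nabla^2\rho(t)\right\|_{L^\infty}^{5/2}+\left\|\nabla u(t)\right\|_{L^\infty}^{5/2}$, so that the standing hypothesis of the lemma, together with the finiteness of $T^*$, guarantees $\int_0^{T^*}A(t)\,\dt\,=:\,\mc A\,<\,+\infty$. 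Applying Grönwall's lemma to \eqref{est:E_second} then yields, for every $t\in[0,T^*[$,
\[
E_s(t)\,\lesssim\,E_s(0)\,\exp\left(C\,\mc A\right)\,=:\,M\,<\,+\infty\,,
\]
so that the energy stays bounded up to the putative blow-up time.

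Next I would convert this bound on $E_s$ into uniform control of the solution in its natural spaces and pass to the limit $t\to T^*$. By definition of $E_s$ and by \eqref{est:W_eff-Besov}--\eqref{est:Drho-W}, the quantities $\left\|u(t)\right\|_{L^2\cap B^s_{\infty,r}}$, $\left\|W_\eff(t)\right\|_{B^s_{\infty,r}}$ and $\left\|\rho(t)\right\|_{B^{s+1}_{\infty,r}}$ are all bounded by a constant depending only on $M$, uniformly on $[0,T^*[$; moreover the pointwise bounds $\rho_*\leq\rho\leq\rho^*$ from \eqref{est:rho-inf} persist. Using the transport equations in \eqref{eq:odd-Els} to control $\d_t\rho$, $\d_t u$ and $\d_t W_\eff$ in spaces of one degree lower regularity, I would show that $t\mapsto(\rho(t),u(t))$ is Cauchy in a weaker norm as $t\to T^*$, hence admits a limit $(\rho^\sharp,u^\sharp)$; by weak compactness (a Fatou-type argument at the level of the Littlewood-Paley blocks) this limit inherits the bounds, so $\rho^\sharp\in B^{s+1}_{\infty,r}$ with $\rho_*\leq\rho^\sharp\leq\rho^*$, $u^\sharp\in L^2\cap B^s_{\infty,r}$ and $\div u^\sharp=0$. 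In particular $(\rho^\sharp,u^\sharp)$ is an admissible datum for Theorem \ref{th:wp-inf}.

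The extension is then the heart of the argument: I would apply Theorem \ref{th:wp-inf} with $(\rho^\sharp,u^\sharp)$ as initial datum posed at time $T^*$. The decisive point --- which must be read off from the proof of that theorem --- is that the existence time it delivers depends only on the norms of the datum, hence only on $M$ and on $\left(\rho_*,\rho^*,\|u_0\|_{L^2}\right)$, and \emph{not} on the distance to $T^*$; thus there is a fixed $\de>0$ giving a solution on $[T^*,T^*+\de]$. Concatenating this solution with the original one (the overlap matching by the uniqueness statement of Theorem \ref{th:wp-inf}) produces a solution on $[0,T^*+\de[$ enjoying the same regularity, which is exactly the claimed continuation. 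The main obstacle I anticipate is the passage to the limit at $T^*$ in the endpoint case $r=+\infty$: there the time-continuity guaranteed by Theorem \ref{th:wp-inf} is only weak-$*$, so the convergence $(\rho(t),u(t))\to(\rho^\sharp,u^\sharp)$ must be taken in the weak-$*$ topology of $B^{s+1}_{\infty,\infty}\times B^s_{\infty,\infty}$, and one must check carefully that such a weakly attained datum still triggers the local theory with a time controlled only by $M$. Verifying that the local existence time is genuinely independent of the higher-order norms is the key quantitative input, and is precisely where the structure of the \tsl{a priori} bounds derived above is used.
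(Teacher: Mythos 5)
Your proof is correct and supplies exactly the ``classical argument'' that the paper invokes but omits: the bound $A(t)\lesssim 1+\left\|\nabla^2\rho\right\|_{L^\infty}^{5/2}+\left\|\nabla u\right\|_{L^\infty}^{5/2}$ coming from \eqref{est:interp-rho}, Gr\"onwall applied to \eqref{est:E_second} to get a uniform bound on $E_s$ over $[0,T^*[\,$, the lower bound \eqref{est:time-rough} showing the local existence time depends only on the norms of the datum, and uniqueness to glue the extension. The only remark worth making is that you can bypass the construction of the limit datum $(\rho^\sharp,u^\sharp)$ at $t=T^*$ (and hence the weak-$*$ continuity subtlety you flag for $r=+\infty$) by instead restarting the local theory from some $t_0<T^*$ with $T^*-t_0<\delta$, where $\delta>0$ is the uniform existence time furnished by the bound on $E_s$; uniqueness then identifies the new solution with the original one on the overlap, giving the continuation beyond $T^*$.
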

Notice that this continuation criterion relies also on uniqueness of solutions in the considered functional framework: this will be proved in Subsection
\ref{ss:stab-uniqueness} below.
Now, from Lemma \ref{l:cont-crit} it is easy to deduce the blow-up criterion claimed in Theorem \ref{t:lifespan}, see \eqref{eq:blow-up}.

\medbreak
Next, we use estimate \eqref{est:E_second} to bound the quantity $E_s$ uniformly in some small time interval $[0,T]$. For this, we follow a classical argument.
Let us define the time $T>0$ as
\[
 T\,:=\,\sup\left\{t\geq0\;\Big|\quad \int^t_0A(\t)\,\dd\t\,\leq\,\log2\right\}\,.
\]
As, in our functional framework, $A(t)$ is a continuous function of time and its integral equals $0$ when $t=0$, it is clear that one indeed has $T>0$.

Then, by definition, from inequality \eqref{est:E_second} and an application of Gr\"onwall lemma we infer that
\begin{equation} \label{est:E_unif}
 \forall\,t\in[0,T]\,,\qquad\qquad E_s(t)\,\leq\,K\,E_s(0)\,,
\end{equation}
for a suitable universal constant $K>0$, depending only on $s$, $\rho_*$, $\rho^*$ and $\|u_0\|_{L^2}$, but not on the solution, nor on the high regularity norm
of the initial datum.

As a byproduct of \eqref{est:E_unif}, using also the definition \eqref{def:E_s} of $E_s$ and \eqref{est:W_eff-Besov}, we get
\begin{equation*} 
\sup_{t\in[0,T]}\left(\left\|u(t)\right\|_{L^2\cap B^s_{\infty,r}}\,+\,\left\|W_\eff(t)\right\|_{B^s_{\infty,r}}\right)\,\leq\,C\,,
\end{equation*}
for a universal constant $C>0$, depending only on the norms of the initial datum. Recalling the definition of $W_\eff$, we gather
also that
\[
 \sup_{t\in[0,T]}\left\|\rho(t)\right\|_{B^{s+1}_{\infty,r}}\,\leq\,C\,,
\]
which implies, by virtue of inequalities \eqref{est:D-Pi^0-Besov} and \eqref{est:Pi_inf}, the bound
$\sup_{t\in[0,T]}\left\|\nabla\Pi^0\right\|_{L^2\cap B^s_{\infty,r}}\,\leq\,C$.
Using the definition of $\Pi^0\,:=\,\Pi\,-\,\nu_0\,\rho\,\omega$, with $\omega\,=\,\curl(u)$, we finally recover uniform bounds for the (original) hydrodynamic
pressure $\Pi$: more precisely, we get
\[
\sup_{t\in[0,T]}\left\|\nabla\Pi\right\|_{B^{s-2}_{\infty,r}}\,\leq\,C\,.
\]

Before concluding, we observe that, from estimate \eqref{est:E_unif} and the definition of the time $T$, we could already deduce a first lower bound for the lifespan
of the solution, of the type
\begin{equation} \label{est:time-rough}
T\,\gtrsim\,\frac{1}{\left\|\rho_0\right\|_{B^s_{\infty,r}}\,+\,\left\|u_0\right\|_{L^2\cap B^s_{\infty,r}}}\,.
\end{equation}
Notice that this is the classical bound one can deduce from quasilinear hyperbolic theory.
We refer to Subsection \ref{ss:lifespan} below for the proof of the lower bound claimed in Theorem \ref{t:lifespan}, which will improve
\eqref{est:time-rough}.

\section{Proof of the main results} \label{s:proof-inf}

In this section, we give the rigorous proof of the theorems stated in Subsection \ref{ss:wp-inf}. In Subsection \ref{ss:proof-ex}, we prove the existence of a solution.
The stability and uniqueness issues are dealt with in Subsection \ref{ss:stab-uniqueness}, thus completing the proof
to Theorem \ref{th:wp-inf}. Finally, in Subsection \ref{ss:lifespan} we bound from below the lifespan of the solutions:
together with Lemma \ref{l:cont-crit}, also the proof of Theorem \ref{t:lifespan} will be then completed.

\subsection{Proof of existence} \label{ss:proof-ex}

We carry out here the proof of the existence of a solution to system \eqref{eq:odd} at the given level of regularity, as claimed in Theorem \ref{th:wp-inf}.
We divide our proof in two different cases: in the first one, presented in Paragraph \ref{sss:L^2-dens}, we assume an additional $L^2$ condition
over the density variation $\rho_0-1$; in the second one, we treat the general case. Finally, in Paragraph \ref{sss:time-reg} we establish
the claimed time-continuity properties of the solution.

\subsubsection{Existence for $L^2$ density variation} \label{sss:L^2-dens}

In this paragraph, we establish existence of solutions under the additional assumption that $\rho_0-1$ belongs to $L^2(\R^2)$.
More precisely, we are going to prove the following lemma.

\begin{lemma} \label{l:ex-energy}
Let the couple $\big(\rho_0,u_0\big)$ be an initial datum for system \eqref{eq:odd}, which satisfies the assumptions fixed in Theorem \ref{th:wp-inf}.
Assume in addition that
\begin{equation} \label{hyp:dens-L^2}
 \rho_0-1\,\in\,L^2(\R^2)\,.
\end{equation}

Then, there exists a solution $\big(\rho,u,\nabla\Pi\big)$ to system \eqref{eq:odd}, defined on $[0,T]\times\R^2$ for some positive time $T>0$,
which verifies the following conditions:
\begin{itemize}
 \item $\rho\in L^\infty\big([0,T]\times\R^2\big)$ verifies $\rho_*\leq\rho\leq\rho^* $ and $\rho\in L^\infty\big([0,T];B^{s+1}_{\infty,r}\big)$;
\item $u$ belongs to $L^\infty\big([0,T];L^2\cap B^s_{\infty,r}\big)$;
\item the pressure gradient verifies $\nabla \Pi\in L^\infty\big([0,T];B^{s-2}_{\infty,r}\big)$, whereas the quantity
$\nabla\big(\Pi\,-\,\nu_0\,\rho\,\o\big)$ belongs to $L^\infty\big([0,T];L^2(\R^2)\cap B^{s}_{\infty,r}(\R^2)\big)$.
\end{itemize}
In addition, if we define $W_\eff\,:=\,u\,-\,2\,\nu_0\,\nabla^\perp\log\rho$, then $\big(\rho,u,W_\eff,\nabla\big(\Pi-\nu_0\rho\o\big)\big)$
solves the ``Els\"asser formulation'' \eqref{eq:odd-Els} of the odd viscosity system.
\end{lemma}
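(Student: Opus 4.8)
The plan is to obtain the solution by a classical regularization-and-compactness scheme: I construct smooth approximate solutions using the Sobolev theory of Theorem \ref{th:odd-full}, control them uniformly on a common time interval by means of the a priori estimate \eqref{est:E_unif}, and then pass to the limit.

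First I would regularize the initial data by convolution with a nonnegative mollifying kernel $\chi_\eps$, setting $u_0^\eps:=\chi_\eps*u_0$ and $\rho_0^\eps:=1+\chi_\eps*(\rho_0-1)$. Since $\chi_\eps\geq0$ has unit mass, the pointwise bounds are preserved, $\rho_*\leq\rho_0^\eps\leq\rho^*$, and since convolution commutes with derivatives we keep $\div u_0^\eps=0$. Here the additional hypothesis \eqref{hyp:dens-L^2} plays its only role: because $\rho_0-1\in L^2(\R^2)$, the smoothed variation $\rho_0^\eps-1$ is a smooth $L^2$ function and thus lies in every Sobolev space, in particular in $H^{s+1}(\R^2)$; likewise $u_0^\eps\in H^\sigma(\R^2)$ for all $\sigma$, since $u_0\in L^2\cap B^s_{\infty,r}$. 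These data therefore fall within the scope of Theorem \ref{th:odd-full}, while at the same time one has the uniform bound $E_s^\eps(0)\leq C_0$ and the convergences $\rho_0^\eps\to\rho_0$, $u_0^\eps\to u_0$ in the appropriate topologies (strongly if $r<+\infty$, weakly-$*$ if $r=+\infty$).

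Next, applying Theorem \ref{th:odd-full} (with any Sobolev index $\sigma>2$, the data being smooth) yields for each $\eps$ a unique smooth solution $(\rho^\eps,u^\eps,\nabla\Pi^\eps)$ defined on a maximal interval $[0,T^*_\eps)$, which by the computations of Section \ref{s:reform} also solves the Els\"asser system \eqref{eq:odd-Els}. The crucial point is a uniform lifespan: applying the a priori bound \eqref{est:E_unif} to these smooth solutions, as long as $\int_0^t A^\eps\,\dd\tau\leq\log2$ one has $E_s^\eps\leq KE_s^\eps(0)\leq KC_0$; since $A^\eps$ is dominated by a fixed power of $E_s^\eps$, this forces $A^\eps\leq M$ with $M$ independent of $\eps$, so the threshold holds on a common interval $[0,T]$ with $T:=(\log2)/M$. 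The continuation criterion of Lemma \ref{l:cont-crit} then guarantees that every $(\rho^\eps,u^\eps,\nabla\Pi^\eps)$ is defined up to $T$, with uniform bounds: $\rho^\eps$ in $L^\infty([0,T];B^{s+1}_{\infty,r})$, $u^\eps$ in $L^\infty([0,T];L^2\cap B^s_{\infty,r})$, and, through \eqref{est:D-Pi^0-Besov}, $\nabla\Pi^{0,\eps}$ in $L^\infty([0,T];L^2\cap B^s_{\infty,r})$.

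Finally I would let $\eps\to0$. Using the equations \eqref{eq:odd-Els} to bound $\d_t\rho^\eps$ and $\d_tu^\eps$ in a space of lower regularity, an Aubin--Lions--Simon argument — applied locally in space, since $\R^2$ is unbounded — produces a subsequence converging strongly in $C([0,T];H^{s'}_{\mathrm{loc}})$ for every $s'<s$; this is enough to pass to the limit in the transport nonlinearities and in the pressure, identifying the limit $(\rho,u,\nabla\Pi)$ as a solution of \eqref{eq:odd}. The uniform bounds of the previous step, combined with weak-$*$ lower semicontinuity of the norms, yield the claimed $L^\infty$-in-time regularity, and since each triple $(\rho^\eps,u^\eps,W_\eff^\eps)$ with $W_\eff^\eps=u^\eps-2\nu_0\nabla^\perp\log\rho^\eps$ solves \eqref{eq:odd-Els}, so does the limit $(\rho,u,W_\eff,\nabla(\Pi-\nu_0\rho\o))$. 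I expect the compactness step to be the main obstacle: without a global compact embedding one must localize carefully to upgrade the weak bounds to strong local convergence for the quadratic terms, and one must handle the pressure gradient $\nabla\Pi^{0,\eps}$ — determined nonlocally through the elliptic equation \eqref{eq:pressure} — so that it converges strongly enough to identify the pressure in the limiting equations.
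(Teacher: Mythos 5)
Your proposal is correct and follows essentially the same scheme as the paper's proof: regularise the data, solve each approximate problem via the Sobolev theory of Theorem \ref{th:odd-full}, exploit the fact that smooth solutions also satisfy the Els\"asser system \eqref{eq:odd-Els} to run the uniform \textsl{a priori} estimates of Section \ref{s:a-priori} on a common time interval, and conclude by a local-in-space compactness argument that passes all nonlinear terms (and the Els\"asser structure) to the limit. The only deviations are cosmetic --- you mollify by convolution where the paper uses the spectral cut-off $S_n$ (your choice has the small advantage of preserving the bounds $\rho_*\leq\rho_0^\eps\leq\rho^*$ automatically, since the kernel is nonnegative), and you invoke Aubin--Lions--Simon where the paper cites Ascoli--Arzel\`a, which at this level of detail is the same argument.
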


\begin{proof}
The proof follows a classical scheme. First of all, let us regularise the initial datum: for any $n\in\N$, we define
\[
\rho_0^n\,:=\,S_n\rho_0\qquad\qquad \mbox{ and }\qquad\qquad u_0^n\,:=\,S_nu_0\,,
\]
where $S_n$ is the low frequency cut-off operator defined in \eqref{eq:S_j} in the Appendix below. Observe that one has, for a suitable constant $C>0$
independent of $n\in\N$, the uniform bounds
\[
\forall\,n\in\N\,,\qquad\qquad \left\|\rho_0^n\right\|_{B^{s+1}_{\infty,r}}\,\leq\,C\,\left\|\rho_0\right\|_{B^{s+1}_{\infty,r}}
\qquad \mbox{ and }\qquad \left\|u_0^n\right\|_{L^2\cap B^s_{\infty,r}}\,\leq\,C\,\left\|u_0\right\|_{L^2\cap B^s_{\infty,r}}\,.
\]
In addition, one gets strong convergence properties of the approximate data to the original one. For simplicity of presentation,
from now on we assume that $r<+\infty$.
Then, it follows from Lemma 2.73 of \cite{BCD} that one has 
\[ 
\rho^n_0\,\longrightarrow\,\rho_0\quad \mbox{ in }\; B^{s+1}_{\infty,r}\,,\qquad\qquad
u^n_0\,\longrightarrow\,u_0\quad \mbox{ in }\; L^2\cap B^s_{\infty,r}\,.
\] 
in the limit $n\to+\infty$.
In the case $r=+\infty$, one has strong convergence only in the space $\bigcap_{\s<s}B^{\s}_{\infty,\infty}$, instead of $B^s_{\infty,\infty}$.

Remark that, by definition, one has $\rho^n_0-1\,=\,S_n\rho_0-1\,=\,S_n\left(\rho_0-1\right)$.
Therefore, owing to the finite energy assumptions $u_0\in L^2$ and $\rho_0-1\in L^2$, one immediately gathers that,
for all $n\in\N$, the approximate data $\rho^n_0$ and $u_0^n$ belong to the space
$H^\infty\,:=\,\bigcap_{\s\in\R}H^\s$. Hence, for any $n\in\N$ fixed, we can solve system \eqref{eq:odd} by using
Theorem \ref{th:odd-full}, finding a unique local in time solution $\big(\rho^n,u^n,\nabla\Pi^n\big)$ defined on some time interval $[0,T_n]$.
Notice that this solution is smooth in space, as it belongs to $H^\s$ for any $\s\geq0$.
Therefore we can follow the computations of Subsection \ref{ss:effective} and deduce that
$\big(\rho^n,u^n,W^n_\eff,\nabla\big(\Pi^n-\nu_0\rho^n\o^n\big)\big)$ solves \eqref{eq:odd-Els} for any $n\in\N$,
where we have defined $W^n_\eff\,=\,u^n\,-\,2\,\nu_0\,\nabla^\perp\log\rho^n$.
Thanks to this property, we repeat the analysis of Section \ref{s:a-priori} and deduce two important facts:
\begin{enumerate}[(i)]
 \item first of all, keeping estimate \eqref{est:time-rough} in mind, that
there exists an absolute time $T>0$, independent of $n\in\N$, such that $\inf_nT_n\geq T$;
 \item moreover, that the sequence of solutions $\big(\rho^n,u^n,\nabla\Pi^n\big)$ is uniformly bounded in the respective functional spaces:
\[
\sup_{n\in\N}\left(\left\|\rho^n\right\|_{L^\infty_T(B^{s+1}_{\infty,r})}\,+\,\left\|u^n\right\|_{L^\infty_T(L^2\cap B^s_{\infty,r})}\,+\,
\left\|\nabla\Pi^n\right\|_{L^\infty_T(B^{s-2}_{\infty,r})}\right)\,\leq\,C\,,
\]
for a universal constant $C>0$, depending only on the norms of the initial datum;
\item similarly, if we define $\omega^n\,:=\,\curl(u^n)$, we have that
\[
\sup_{n\in\N}\left\|\nabla\left(\Pi^n-\nu_0\,\rho^n\,\o^n\right)\right\|_{L^\infty_T(L^2\cap B^s_{\infty,r})}\,\leq\,C\,,
\]
where $C>0$ is as above.
\end{enumerate}

Using the uniform bounds mentioned in item (ii) above and the Fatou property of Besov spaces, it is easy to extract a subsequence (not relabelled here)
$\big(\rho^n,u^n,\nabla\Pi^n\big)_n$ converging weakly-$*$ (each component in the respective functional space) to a triplet $\big(\rho,u,\nabla\Pi\big)$.
A classical argument, based on the inspection of equations \eqref{eq:odd} and the use of Ascoli-Arzel\`a theorem,
allows also to deduce strong convergence properties in larger spaces; in turn, this implies that one can pass to the limit also
in the non-linear terms appearing in the weak formulation of the equations and prove that the triplet $\big(\rho,u,\nabla\Pi\big)$
is in fact the sought solution. We omit the details here.

It remains to check that $\big(\rho,u,W_\eff,\nabla\big(\Pi-\nu_0\rho\o\big)\big)$, where we have defined $W_\eff\,:=\,u\,-\,2\,\nu_0\,\nabla^\perp\log\rho$,
is a solution of the ``Els\"asser formulation'' \eqref{eq:odd-Els}. This property easily follows from a compactness argument similar to the one employed above,
after recalling that, for any $n\in\N$, the set $\big(\rho^n,u^n,W^n_\eff,\nabla\big(\Pi^n-\nu_0\rho^n\o^n\big)\big)$ also solves \eqref{eq:odd-Els}.
\end{proof}

\subsubsection{Proof of existence: the general case} \label{sss:general-dens}

In this paragraph, we are going to prove the existence of a solution at the claimed level of regularity in the general case, namely without the additional
assumption \eqref{hyp:dens-L^2}.
The leading idea is to reconduct ourselves to the setting considered in Lemma \ref{l:ex-energy}.
For this, we take a smooth compactly supported function $\eta\,\in\,C^\infty_0(\R^2)$ such that
\[
0\,\leq\,\eta\,\leq 1\,,\qquad\qquad \eta\equiv1 \; \mbox{ for } \; |x|\leq1\qquad \mbox{ and }\qquad \eta\equiv0 \; \mbox{ for } \; |x|\geq2\,,
\]
For any $k\in\N\setminus\{0\}$, we then set
\[
 \eta_k(x)\,:=\,\eta\left(\frac{x}{k}\right)\,.
\]

At this point, for any $k$ as above, we can define
\[
 r_0^k\,:=\,\eta_k\,\big(\rho_0\,-\,1\big)\,.
\]
Observe that one has
\begin{equation} \label{est:L^inf_r_0}
\rho_*\,-\,1\,\leq\,r^k_0\,\leq\,\rho^*\,-\,1\,.
\end{equation}
In particular, we see that $r_0^k$ is a bounded function with compact support, thus $r_0^k\in L^2$.
In addition, as $C^\infty_0\subset B^\s_{\infty,r}$ for any $\s\in\R$
and any $r\in[1,+\infty]$, by Corollary \ref{c:tame} we get that $r_0^k$ also belongs to $B^s_{\infty,r}$. Then, we can define the approximate densities
$\rho_0^k$ as
\[
 \rho_0^k\,:=\,1\,+\,r_0^k\,.
\]
It follows from the previous arguments and \eqref{est:L^inf_r_0} that
\begin{equation} \label{est:rho_0^k-inf}
\rho_*\,\leq\,\rho^k_0\,\leq\,\rho^*
\end{equation}
and that $\rho_0^k\,\in\,B^s_{\infty,r}$, together with the uniform bound
\begin{equation} \label{est:rho_0^k-B}
\left\|\rho^k_0\right\|_{B^s_{\infty,r}}\,\leq\,\left\|\rho^k_0\right\|_{L^\infty}\,+\,\left\|\nabla\rho^k_0\right\|_{B^{s-1}_{\infty,r}}\,
\leq\,\rho^*\,+\,\left\|r^k_0\right\|_{B^s_{\infty,r}}\,\leq\,C\,\left\|\rho_0\right\|_{B^s_{\infty,r}}\,,
\end{equation}
for a suitable universal constant $C>0$, not depending on $k\geq1$. Notice that, in stating the last inequality, we have used Corollary \ref{c:tame} again.
Finally, we point out that, by definition of $\eta_k$, one has $\rho_0^k\equiv\rho_0$ on the ball
$B(0,k)$ of center $0$ and radius $k$. 
Thus, we infer the convergence property 
\[ 
 \rho_0^k\,\longrightarrow\,\rho_0\qquad\qquad \mbox{ in }\quad L^\infty_{\rm loc}(\R^2)\,,
\] 
in the limit $k\to+\infty$.

\medbreak
This having been established, the rest of the proof follows the main lines of the proof of Lemma \ref{l:ex-energy}.
For any $k\geq1$ we can solve system \eqref{eq:odd} with initial datum $\big(\rho_0^k,u_0\big)$:
by applying Lemma \ref{l:ex-energy}, we gather the existence of a sequence $\big(\rho^k,u^k,\nabla\Pi^k\big)_{k\geq1}$ of solutions
to that system, defined on some time intervals $[0,T_k]$. Keep in mind that those quantities also solve the Els\"asser-type system \eqref{eq:odd-Els}.
By virtue of the uniform bounds \eqref{est:rho_0^k-inf} and \eqref{est:rho_0^k-B}
and of estimate \eqref{est:time-rough}, we deduce that
$T_k\geq T>0$, for a suitable time $T>0$ independent of $k$, together with the properties
\[
\rho_*\,\leq\,\rho^k\,\leq\,\rho^*\qquad \mbox{ and }\qquad
\sup_{k\geq1}\left(\left\|\rho^k\right\|_{L^\infty_T(B^{s+1}_{\infty,r})}\,+\,\left\|u^k\right\|_{L^\infty_T(L^2\cap B^s_{\infty,r})}
\right)\,\leq\,C\,,
\]
for a suitable constant $C>0$. Concerning the pressure functions, we get, after setting $\o^k\,:=\,\curl(u^k)$ to be the vorticity
of the velocity field $u^k$, the bounds
\[
\sup_{k\geq1}\left(\left\|\nabla\Pi^k\right\|_{L^\infty_T(B^{s-2}_{\infty,r})}\,+\,
\left\|\nabla\left(\Pi^k\,-\,\nu_0\,\rho^k\,\o^k\right)\right\|_{L^\infty_T(L^2\cap B^s_{\infty,r})}\right)\,\leq\,C\,.
\]

Thanks to those uniform bounds, we can mimick the compactness argument evoked at the end of the proof of Lemma \ref{l:ex-energy} and pass to the limit
in the weak formulation of both equations \eqref{eq:odd} and \eqref{eq:odd-Els}. In this way, we prove the existence of a solution
$\big(\rho,u,\nabla\Pi\big)$ on $[0,T]\times\R^2$, satisfying the claimed regularity properties and verifying both
systems  \eqref{eq:odd} and \eqref{eq:odd-Els}.

\subsubsection{Time regularity of the solution} \label{sss:time-reg}

What remains to do is to prove the claimed time regularity properties of the solution: this is the goal of the present paragraph.

Hence, assume to dispose of a triplet $\big(\rho,u,\nabla\Pi\big)$ defined on $[0,T]\times\R^2$, for a suitable time $T>0$, which solves equations
\eqref{eq:odd} and such that, after defining $W_\eff\,:=\,u-2\nu_0\nabla^\perp\log\rho$ and $\nabla\Pi^0\,:=\,\nabla\big(\Pi-\nu_0\rho\o\big)$,
with $\o=\curl(u)$, the set $\big(\rho,u,W_\eff,\nabla\Pi^0\big)$ satisfies equations \eqref{eq:odd-Els}. Finally, assume that:
\begin{itemize}
 \item there exist two constants $\rho_*<\rho^*$ such that $\rho_*\leq\rho\leq\rho^*$ and $\rho\in L^\infty_T\big(B^{s+1}_{\infty,r}\big)$;
\item both $u$ and $\nabla\Pi^0$ belong to $L^\infty_T\big(L^2\cap B^{s+1}_{\infty,r}\big)$;
\item the pressure gradient $\nabla\Pi$ belongs to $L^\infty_T\big(B^{s-2}_{\infty,r}\big)$.
\end{itemize}

We start by considering $u$, which solves the first equation appearing in system \eqref{eq:odd-Els}. Thus, $u$ is transported by a divergence-free
Lipschitz continuous vector field, namely $W_\eff\in L^\infty_T\big(L^2\cap B^{s}_{\infty,r}\big)$, with initial datum $u_0\in L^2\cap B^s_{\infty,r}$
and with an external force $\nabla\Pi^0/\rho$ which belong to $L^\infty_T\big(L^2\cap B^{s}_{\infty,r}\big)$.
Then, classical results on transport equations in Besov spaces (see \tsl{e.g.} Chapter 3 of \cite{BCD}) imply
that $u\in C\big([0,T];L^2\cap B^{s}_{\infty,r}\big)$, as sought.

By the same token, we deduce that both $\rho$ and $W_\eff$ belong to $C\big([0,T];B^{s}_{\infty,r}\big)$; using the definition of $W_\eff$ and
the previous property for $u$, this in turn implies that $\rho\in C\big([0,T];B^{s+1}_{\infty,r}\big)$.

This having been established, we see that the analysis of Paragraph \ref{sss:pressure} yields
$\nabla\Pi^0\in C\big([0,T];L^2\cap B^{s}_{\infty,r}\big)$. Recalling the definition of $\nabla\Pi^0$ and the properties established so far,
we finally gather the sough regularity for the pressure gradient, namely $\nabla\Pi\in C\big([0,T];B^{s-2}_{\infty,r}\big)$.

\medbreak
All in all, we have obtained the claimed time regularity properties. This completes the proof of the existence part of Theorem \ref{th:wp-inf}.

\subsection{Stability and uniqueness} \label{ss:stab-uniqueness}

In this subsection, we are going to prove the uniqueness of solutions in the considered functional setting.
No need to say, the Els\"asser formulation \eqref{eq:odd-Els} of the original odd viscosity system will play a fundamental role in our argument.

Notice that, in the previous Subsection \ref{ss:proof-ex}, we have proved that the constructed solution solves not only \eqref{eq:odd}, but also \eqref{eq:odd-Els}. 
However, for the uniqueness proof, this is not enough: we need to establish first that \emph{any} solution of \eqref{eq:odd} also solves \eqref{eq:odd-Els}.
By embeddings, we only need to show this in the lowest regularity framework, corresponding to the case $s=r=1$.

\begin{lemma} \label{l:odd-to-Els}
Let $T>0$ and let the triplet $\big(\rho,u,\nabla\Pi\big)$ be a solution to system \eqref{eq:odd} on $[0,T]\times\R^2$ satisfying the following regularity properties:
\begin{itemize}
 \item $\rho\in L^\infty\big([0,T]\times\R^2\big)\cap C\big([0,T];B^{2}_{\infty,1}\big)$, with $0<\rho_*\leq\rho\leq\rho^*$, for two suitable positive constants
$\rho_*$ and $\rho^*$;
\item $u\in C\big([0,T];L^2\cap B^1_{\infty,1}\big)$;
\item $\nabla\Pi$ belongs to $C\big([0,T];B^{-1}_{\infty,1}\big)$.
\end{itemize}

After denoting by $\o\,:=\,\curl(u)$ the vorticity of the fluid, define $W_\eff\,:=\,u\,-\,2\,\nu_0\,\nabla^\perp\log\rho$ and $\Pi^0\,:=\,\Pi\,-\,\nu_0\,\rho\,\o$.
Then the quadruple $\big(\rho,u,W_\eff,\nabla\Pi^0\big)$ solves the Els\"asser system \eqref{eq:odd-Els}.
\end{lemma}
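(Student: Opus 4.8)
The plan is to recognize that the passage from \eqref{eq:odd} to \eqref{eq:odd-Els}, performed formally in Subsection \ref{ss:effective}, consists entirely of elementary operations — linear combinations of the equations, spatial differentiation, multiplication by $\rho$ and its derivatives, and the chain rule for the composition $\rho\mapsto\log\rho$ — and to check that each of these remains legitimate in the sense of distributions at the regularity level $s=r=1$. The first task is therefore a regularity audit. Since $\rho\in C([0,T];B^2_{\infty,1})$ is bounded away from $0$ and $u\in C([0,T];B^1_{\infty,1})$, the mass equation gives $\d_t\rho=-u\cdot\nabla\rho\in C([0,T];B^1_{\infty,1})$, so that $\rho$ is genuinely $C^1$ in time and space and $\log\rho\in C([0,T];B^2_{\infty,1})$ with $\d_t\log\rho\in C([0,T];B^1_{\infty,1})$. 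Consequently $\o=\curl(u)\in C([0,T];B^0_{\infty,1})$, $W_\eff\in C([0,T];B^1_{\infty,1})$ and $\nabla\Pi^0=\nabla\Pi-\nu_0\,\nabla(\rho\,\o)\in C([0,T];B^{-1}_{\infty,1})$, so that every quantity appearing in \eqref{eq:odd-Els} is well defined.

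Next I would reproduce the three derivations of Subsection \ref{ss:effective} in order. The first equation of \eqref{eq:odd-Els} is immediate: Leibniz' rule $\div(\rho u)=u\cdot\nabla\rho+\rho\,\div u$, valid for the $C^1$ pair $(\rho,u)$, together with $\div u=0$, turns the conservative mass equation into $\d_t\rho+u\cdot\nabla\rho=0$. Since $\rho$ is classically $C^1$ and bounded below, the chain rule then yields $\d_t\log\rho+u\cdot\nabla\log\rho=0$ pointwise, and differentiating this transport identity in space produces a transport equation for $\nabla\log\rho$ with a lower-order term bilinear in $\nabla u$ and $\nabla\log\rho$ (hence in $B^0_{\infty,1}$), which is the content of \eqref{eq:Dlog}. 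For the second equation of \eqref{eq:odd-Els} I would follow Paragraph \ref{sss:Brenner}: pass to the non-conservative momentum balance, split $u\cdot\nabla u=W_\eff\cdot\nabla u+2\nu_0\,\nabla^\perp\log\rho\cdot\nabla u$, use $\rho\,\nabla^\perp\log\rho=\nabla^\perp\rho$ to rewrite the extra term as $-2\nu_0\,\div(\rho\,\mc O_2)$, and then recombine with the odd-viscosity tensor via \eqref{eq:O-diff} to obtain \eqref{eq:mom_eff_W} with the modified pressure $\Pi^0$. For the third equation I would combine twice the $\nabla^\perp\log\rho$-equation with the $\pi/2$-rotation of the momentum equation exactly as in \eqref{eq:Dlog}--\eqref{eq:W_eff}, arriving at \eqref{eq:W_eff}; crucially, the pressure term produced is the same $\nabla\Pi^0$, a fact forced by the algebra rather than assumed. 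The preliminary passage from the conservative to the non-conservative momentum balance is justified by the product rule $\d_t(\rho u)=\rho\,\d_t u+u\,\d_t\rho$ together with the mass equation, all terms being controlled in $B^{-1}_{\infty,1}$.

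The only nonroutine point is to ensure that every product and every Leibniz rule is meaningful at this borderline regularity, where in particular $B^0_{\infty,1}$ fails to be a Banach algebra. The saving observation is that in each bilinear term one factor is always at least Lipschitz: the products that actually occur are of the types $B^2_{\infty,1}\cdot B^0_{\infty,1}$, $B^1_{\infty,1}\cdot B^0_{\infty,1}$, $B^1_{\infty,1}\cdot B^1_{\infty,1}$ and $B^2_{\infty,1}\cdot B^{-1}_{\infty,1}$, each of which is covered by the multiplication and paralinearization laws recalled in Corollary \ref{c:tame} and Lemma \ref{l:paralin}; no genuinely critical $B^0_{\infty,1}\cdot B^0_{\infty,1}$ product ever enters the algebraic manipulations, such products arising only in the vorticity formulation, which is not needed here. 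I thus expect the main effort to be bookkeeping — verifying the index conditions for each product law and checking that Leibniz' rule holds for a $C^1$ multiplier acting on a vector field with distributional divergence — rather than any genuinely new estimate. Once all identities are seen to hold in $C([0,T];B^{-1}_{\infty,1})$, the quadruple $\big(\rho,u,W_\eff,\nabla\Pi^0\big)$ solves \eqref{eq:odd-Els}.
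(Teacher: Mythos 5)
Your proposal is correct and follows essentially the same route as the paper: the paper's proof likewise consists in observing that the computations of Subsection \ref{ss:effective} (equations \eqref{eq:Dlog}, \eqref{eq:odd-comput}, \eqref{eq:u^perp}, \eqref{eq:W^perp}) are all justified in the stated functional framework, the key point being that the mass equation gives $\d_t\rho\in C\big([0,T];B^1_{\infty,1}\big)$, hence $\rho\in C^1_b\big([0,T]\times\R^2\big)$ and $W_\eff\in C\big([0,T];B^1_{\infty,1}\big)$, so that all identities hold in $\mc D'\big([0,T]\times\R^2\big)$. Your version merely makes the product-law bookkeeping more explicit than the paper does, which is a harmless elaboration rather than a different argument.
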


\begin{proof}
The proof simply consists in following the computations of Subsection \ref{ss:effective} and observing that they are all justified in the considered
functional framework.

As a matter of fact, an inspection of the mass equation reveals that $\d_t\rho$ belongs to $C_T\big(B^1_{\infty,1}\big)$, where, for convenience
of notation, we have set $C_T\big(B^1_{\infty,1}\big)\,:=\,C\big([0,T];B^1_{\infty,1}\big)$.
Then, we gather that $\rho\in C^1([0,T];B^1_{\infty,1})\hookrightarrow C^1_b\big([0,T]\times\R^2\big)$, where, for any $k\in\N$, we denote
$C^k_b\,:=\,C^k\cap W^{k,\infty}$.

From the previous property, we deduce two important facts: first of all that $W_\eff\in C_T(B^1_{\infty,1})$, and secondly that equation \eqref{eq:Dlog}
makes sense in $\mc D'\big([0,T]\times\R^2\big)$. The same can be said about the computations in \eqref{eq:odd-comput}, which in turn imply  \eqref{eq:u^perp}.
The validity of both \eqref{eq:Dlog} and \eqref{eq:u^perp} finally yields \eqref{eq:W^perp}, thus completing the proof of the Lemma.
\end{proof}

With Lemma \ref{l:odd-to-Els} at hand, we can establish a stability estimate for solutions to the Els\"asser system \eqref{eq:odd-Els}. This is a key step in the proof
of uniqueness.
Once again, we can limit ourselves to consider the minimal regularity case $s=r=1$.

\begin{prop} \label{p:stab-Els}
Let $T>0$. Assume to dispose of two sets $\big(\rho^{(1)}, u^{(1)}, W^{(1)}, \nabla\pi_u^{(1)}, \nabla\pi_W^{(1)}\big)$ and
$\big(\rho^{(2)}, u^{(2)}, W^{(2)}, \nabla\pi_u^{(2)}, \nabla\pi_W^{(2)}\big)$ of (weak) solutions to the following system on $[0,T]\times\R^2$:
\begin{equation} \label{eq:gen-Els}
\left\{\begin{array}{l}
\d_t\rho\,+\,u\cdot\nabla\rho\,=\,0 \\[1ex]
\rho\,\d_t u\,+\,\rho\,W\cdot\nabla u\,+\,\nabla\pi_u\,=\,0 \\[1ex]
\rho\,\d_tW\,+\,\rho\,u\cdot\nabla W\,+\,\nabla\pi_W\,=\,0 \\[1ex]
\div u\,=\,\div W\,=\,0\,.
       \end{array}
\right.
\end{equation}
In addition, assume that:
\begin{enumerate}[(a)]
\item the two density functions $\rho^{(1)}$ and $\rho^{(2)}$ both belong to $L^\infty\big([0,T]\times\R^2\big)$;
\item there exist two constants $0<\rho_*\leq \rho^*$ such that both $\rho^{(1)}$ and $\rho^{(2)}$ take value in the interval $[\rho_*,\rho^*]$;
\item all velocity fields $u^{(1)}$, $W^{(1)}$, $u^{(2)}$ and $W^{(2)}$ elong to $L^1\big([0,T];W^{1,\infty}(\R^2)\big)$;
\item the vector fields $\nabla\rho^{(2)}$, $\nabla\pi_u^{(2)}$ and $\nabla\pi_W^{(2)}$ all belong to $L^1\big([0,T];L^\infty(\R^2)\big)$;
\item the products $\left\|u^{(2)}\right\|_{L^\infty}\,\left\|\nabla W^{(2)}\right\|_{L^\infty}$ and
$\left\|W^{(2)}\right\|_{L^\infty}\,\left\|\nabla u^{(2)}\right\|_{L^\infty}$ both belong to $L^1\big([0,T]\big)$;
\item the differences $\rho^{(1)}-\rho^{(2)}$, $u^{(1)}-u^{(2)}$ and $W^{(1)}-W^{(2)}$ all belong to $C^0\big([0,T];L^2(\R^2)\big)$.
\end{enumerate}

Then, there exists a ``universal'' constant $C>0$ and a time-dependent function $\beta\in L^1\big([0,T]\big)$ such that one has the following stability estimate:
\begin{align*}
&\sup_{t\in[0,T]}\left(\left\|\left(\rho^{(1)}-\rho^{(2)}\right)(t)\right\|_{L^2}\,+\,\left\|\left(u^{(1)}-u^{(2)}\right)(t)\right\|_{L^2}\,+\,
\left\|\left(W^{(1)}-W^{(2)}\right)(t)\right\|_{L^2}\right) \\
&\; \leq\,C\,
\left(\left\|\left(\rho^{(1)}-\rho^{(2)}\right)(0)\right\|_{L^2}\,+\,\left\|\left(u^{(1)}-u^{(2)}\right)(0)\right\|_{L^2}\,+\,
\left\|\left(W^{(1)}-W^{(2)}\right)(0)\right\|_{L^2}\right)\, 
e^{C\int^t_0\beta(\t)\,\dd\t}\,.
\end{align*}

\end{prop}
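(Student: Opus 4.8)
The plan is to prove the estimate by a weighted $L^2$ energy method on the three differences $\delta\rho:=\rho^{(1)}-\rho^{(2)}$, $\delta u:=u^{(1)}-u^{(2)}$ and $\delta W:=W^{(1)}-W^{(2)}$, the guiding principle being that every derivative must fall on the \emph{reference} solution, labelled $(2)$, for which assumptions (c)--(e) supply $L^1\big([0,T];L^\infty\big)$ control of $\nabla\rho^{(2)}$, $\nabla\pi_u^{(2)}$, $\nabla\pi_W^{(2)}$, of the velocity gradients, and of the two mixed products. The differences belong to $C\big([0,T];L^2\big)$ by (f), which is exactly the regularity needed to make the manipulations below rigorous (after a standard mollification, if one insists on working with genuine weak solutions). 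The decisive device is to test the two momentum-type equations against $\delta u$ and $\delta W$ \emph{without} dividing by $\rho$: keeping the factor $\rho^{(1)}$ in front of the material derivatives, the pressure contributions reduce to $\int\nabla\delta\pi_u\cdot\delta u\,\dx$ and $\int\nabla\delta\pi_W\cdot\delta W\,\dx$, which vanish by the divergence-free constraints $\div\delta u=\div\delta W=0$. This is what frees us from any control on the ill-behaved pressure differences.

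First I would treat $\delta\rho$. Subtracting the two mass equations gives $\d_t\delta\rho+u^{(1)}\cdot\nabla\delta\rho+\delta u\cdot\nabla\rho^{(2)}=0$; an $L^2$ estimate, using $\div u^{(1)}=0$ to annihilate the transport term, yields $\frac{\rm d}{\dt}\|\delta\rho\|_{L^2}\lesssim\|\nabla\rho^{(2)}\|_{L^\infty}\,\|\delta u\|_{L^2}$, controlled by (d). Next I would treat $\delta W$. Forming the difference of the two $W$-equations, isolating $\rho^{(1)}\d_t\delta W+\rho^{(1)}u^{(1)}\cdot\nabla\delta W$, testing against $\delta W$ and integrating by parts, the weighted material-derivative terms assemble \emph{exactly} into $\frac12\frac{\rm d}{\dt}\int\rho^{(1)}|\delta W|^2\,\dx$: the convecting field $u^{(1)}$ of the $W$-equation is the very field transporting $\rho^{(1)}$, so the would-be remainder $\frac12\int(u^{(1)}-u^{(1)})\cdot\nabla\rho^{(1)}|\delta W|^2\,\dx$ is identically zero. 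The surviving sources, of schematic form $\rho^{(1)}\delta u\cdot\nabla W^{(2)}$ and $\delta\rho\,\big(\d_tW^{(2)}+u^{(2)}\cdot\nabla W^{(2)}\big)$, are bounded using (b), (c), (e) and, after expressing $\d_tW^{(2)}$ through its own equation, (d). This produces a differential inequality for $\|\sqrt{\rho^{(1)}}\,\delta W\|_{L^2}$ with an $L^1\big([0,T]\big)$ coefficient.

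The hard part will be the $\delta u$ estimate, and it is here that the whole mechanism is tested. Repeating the scheme — weight by $\rho^{(1)}$, test against $\delta u$, drop the pressure by $\div\delta u=0$ — the weighted material derivative now produces the genuinely dangerous term $\frac12\int(u^{(1)}-W^{(1)})\cdot\nabla\rho^{(1)}\,|\delta u|^2\,\dx$, since the $u$-equation is convected by $W^{(1)}$ whereas $\rho^{(1)}$ is transported by $u^{(1)}$. This term cannot be controlled from the mere $L^\infty$ bound on $\rho^{(1)}$ in (a), and integrating by parts to shift the derivative off $\rho^{(1)}$ only reintroduces $\nabla\delta u$, which is not available. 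The resolution is structural: in the Els\"asser configuration at hand $\rho^{(1)}$ is transported by $W^{(1)}$ as well as by $u^{(1)}$ — equivalently $(u^{(1)}-W^{(1)})\cdot\nabla\rho^{(1)}=0$, which for solutions of the odd system is exactly the orthogonality $\nabla^\perp\log\rho\cdot\nabla\rho=0$ underlying \eqref{eq:mass_eff_W}. With this, the dangerous term vanishes and the $\delta u$ estimate closes precisely as the one for $\delta W$, its sources again controlled by (b)--(e).

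Finally I would sum the three differential inequalities. Using the uniform bounds $\rho_*\le\rho^{(1)}\le\rho^*$ from (b) to pass freely between $\|\delta u\|_{L^2}$, $\|\delta W\|_{L^2}$ and their $\rho^{(1)}$-weighted counterparts, the total energy $Y(t):=\|\delta\rho\|_{L^2}+\|\delta u\|_{L^2}+\|\delta W\|_{L^2}$ obeys $\frac{\rm d}{\dt}Y\lesssim\beta(t)\,Y$, where $\beta$ collects $\|\nabla\rho^{(2)}\|_{L^\infty}$, the velocity gradients of (c), the two products of (e) and $\big\|(\nabla\pi_u^{(2)},\nabla\pi_W^{(2)})\big\|_{L^\infty}$; hence $\beta\in L^1\big([0,T]\big)$. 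Gr\"onwall's lemma then gives the asserted inequality. The single genuinely delicate point, to be emphasised in the write-up, is the vanishing of this transport-mismatch term in the $\delta u$ estimate, which is where the Els\"asser structure of \eqref{eq:odd-Els} is indispensable.
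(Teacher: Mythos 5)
Your proposal follows the paper's own route: derive the system for the differences $(\delta\rho,\delta u,\delta W)$ (the paper's system \eqref{eq:de-sol}), run $L^2$ energy estimates in which the pressure gradients are annihilated by $\div\delta u=\div\delta W=0$, arrange all derivatives to fall on the reference solution $(2)$, and conclude by Gr\"onwall; your $\delta\rho$ and $\delta W$ estimates and your function $\beta$ coincide with the paper's. The trouble lies in the step you yourself call decisive. To cancel the transport-mismatch term
\begin{equation*}
\frac{1}{2}\int_{\R^2}\big(u^{(1)}-W^{(1)}\big)\cdot\nabla\rho^{(1)}\,\big|\delta u\big|^2\dx
\end{equation*}
you invoke the identity $\big(u^{(1)}-W^{(1)}\big)\cdot\nabla\rho^{(1)}=0$. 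That identity is \emph{not} among the hypotheses (a)--(f) of Proposition \ref{p:stab-Els}, nor is it implied by the system \eqref{eq:gen-Els}: there, $W$ is an arbitrary divergence-free field with no prescribed link to $\rho$ (the mass equation couples $\rho$ only to $u$). The orthogonality $\nabla^\perp\log\rho\cdot\nabla\rho=0$ is a feature of the particular Els\"asser variables $W_\eff=u-2\nu_0\nabla^\perp\log\rho$ of \eqref{eq:odd-Els}, i.e.\ of the application, not of the statement being proved. Moreover, as your own discussion shows, the term cannot be recovered from (a)--(f) alone: those assumptions give no control on $\nabla\rho^{(1)}$ (only on $\nabla\rho^{(2)}$), and integration by parts trades $\nabla\rho^{(1)}$ for $\nabla\delta u$, which is not estimated in $L^2$. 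So, as a proof of the proposition in its stated generality, your argument has a gap: it establishes the stability estimate only for the subclass of solutions of \eqref{eq:gen-Els} whose density $\rho^{(1)}$ is transported by $W^{(1)}$ as well as by $u^{(1)}$.

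It is worth saying that you have put your finger on a genuine soft spot: the paper's proof merely asserts that ``classical $L^2$ estimates for transport equations by divergence-free vector fields'' yield \eqref{est:de-f} with a $\beta$ built exclusively from solution-$(2)$ quantities, and is silent about precisely this term --- which vanishes for the $\delta W$ equation (whose convecting field $u^{(1)}$ is the one transporting $\rho^{(1)}$) but not, in general, for the $\delta u$ equation (convected by $W^{(1)}$). The repair consistent with how the proposition is actually used in the uniqueness proof of Theorem \ref{th:wp-inf} is to add to the hypotheses the condition $\big(u^{(1)}-W^{(1)}\big)\cdot\nabla\rho^{(1)}=0$ (equivalently, that the first equation of \eqref{eq:gen-Els} also holds with $u$ replaced by $W$), or alternatively to insert the quantity $\big\|\big(u^{(1)}-W^{(1)}\big)\cdot\nabla\rho^{(1)}\big\|_{L^\infty}$ into $\beta$ and require its time-integrability. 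The added condition is automatically satisfied by the quadruples produced by Lemma \ref{l:odd-to-Els}, since there $u^{(1)}-W^{(1)}=2\nu_0\nabla^\perp\log\rho^{(1)}$; with that amendment (to the statement, not just the proof), your argument is complete and correct.
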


\begin{proof}
For any quantity $f\in\big\{\rho, u, W, \nabla\pi_u, \nabla\pi_W\big\}$, we set $\de f\,:=\,f^{(1)}\,-\,f^{(2)}$. Then, we can write the equations
satisfied by the differences $\big(\de\rho, \de u, \de W\big)$: by simple computations, we find 
\begin{equation} \label{eq:de-sol}
\left\{\begin{array}{l}
\big(\d_t\,+\,u^{(1)}\cdot\nabla\big)\de\rho\,=\,-\,\de u\cdot\nabla\rho^{(2)} \\[1ex]
\rho^{(1)}\,
\big(\d_t\,+\,W^{(1)}\cdot\nabla\big)\de u\,+\,\nabla\de \pi_u\,=\,-\,\de\rho\,\d_tu^{(2)}\,-\,\rho^{(1)}\,\de W\cdot\nabla u^{(2)}\,-\,
\de\rho\,W^{(2)}\cdot\nabla u^{(2)} \\[1ex]
\rho^{(1)}\,
\big(\d_t\,+\,u^{(1)}\cdot\nabla\big)\de W\,+\,\nabla\de \pi_W\,=\,-\,\de\rho\,\d_tW^{(2)}\,-\,\rho^{(1)}\,\de u\cdot\nabla W^{(2)}\,-\,
\de\rho\,u^{(2)}\cdot\nabla W^{(2)}\,.
       \end{array}
\right.
\end{equation}

By classical $L^2$ estimates for the transport equations by divergence-free vector fields, we easily get, for any $t\in[0,T]$, the inequality
\begin{align} \label{est:de-f}
\left\|\big(\de\rho,\de u,\de W\big)(t)\right\|_{L^2}\,\lesssim\,\left\|\big(\de\rho_0,\de u_0,\de W_0\big)\right\|_{L^2}\,+\,
\int^t_0\beta(\t)\,\left\|\big(\de\rho,\de u,\de W\big)(\t)\right\|_{L^2}\dd\t\,,
\end{align}
where we have set $\de f_0\,:=\,f^{(1)}(0)\,-\,f^{(2)}(0)$ for $f\in\big\{\rho, u, W\big\}$, and we have defined
\begin{align*}
\beta(t)\,&:=\,\left\|\nabla\rho^{(2)}(t)\right\|_{L^\infty}\,+\,\left\|\big(\d_tu^{(2)},\d_tW^{(2)}\big)(t)\right\|_{L^\infty} \\
&\qquad\qquad\qquad \,+\,
\left\|\nabla u^{(2)}\right\|_{L^\infty}\,\left(1\,+\,\left\|W^{(2)}\right\|_{L^\infty}\right)\,+\,
\left\|\nabla W^{(2)}\right\|_{L^\infty}\,\left(1\,+\,\left\|u^{(2)}\right\|_{L^\infty}\right)\,.
\end{align*}
Notice that the multiplicative constant appearing in \eqref{est:de-f} depends on the constants $\rho_*$ and $\rho^*$.
At this point, as both densities $\rho^{(1)}$  and $\rho^{(2)}$ stay far from vacuum,
we can use equations \eqref{eq:gen-Els} to bound the time derivatives appearing in the definition of the function $\beta$: this gives us
\[
\left\|\d_tu^{(2)},\d_tW^{(2)}\right\|_{L^\infty}\,\lesssim\,\left\|W^{(2)}\right\|_{L^\infty}\,\left\|\nabla u^{(2)}\right\|_{L^\infty}\,+\,
\left\|u^{(2)}\right\|_{L^\infty}\,\left\|\nabla W^{(2)}\right\|_{L^\infty}\,+\,
\left\|\big(\nabla\pi_u^{(2)},\nabla\pi_W^{(2)}\big)\right\|_{L^\infty}\,,
\]
up again to an implicit multiplicative constant, depending on $\rho_*$ and $\rho^*$.
In particular, it follows from our assumptions that the function $\beta(t)$ belongs to $L^1\big([0,T]\big)$. Thus, we can apply the Gr\"onwall lemma
to inequality \eqref{est:de-f} and get the result.
\end{proof}

In the end, we can prove the uniqueness of solutions in our functional framework.

\begin{proof}[Proof of uniqueness in Theorem \ref{th:wp-inf}]
Let $\big(\rho_0,u_0\big)$ be an initial datum satisfying the assumptions of Theorem \ref{th:wp-inf}. We consider two solutions
$\big(\rho^{(1)},u^{(1)},\nabla\Pi^{(1)}\big)$ and $\big(\rho^{(2)},u^{(2)},\nabla\Pi^{(2)}\big)$ associated to that initial datum,
defined on the same time interval $[0,T]$ and possessing the regularity properties claimed in the statement of the theorem.

For $j=1,2$, let us define 
\[
W_\eff^{(j)}\,:=\,u^{(j)}\,-\,2\,\nu_0\,\nabla^\perp\log\rho^{(j)}\,,\qquad \o^{(j)}\,:=\,\curl\big(u^{(j)}\big)\,,\qquad 
\nabla\Pi^0_{(j)}\,=\,\nabla\left(\Pi^{(j)}\,-\,\nu_0\,\rho^{(j)}\,\o^{(j)}\right)\,.
\]
Owing to Lemma \ref{l:odd-to-Els} above, we know that each quadruple $\left(\rho^{(j)}, u^{(j)}, W_\eff^{(j)}, \nabla\Pi^0_{(j)}\right)$
solves system \eqref{eq:odd-Els}. As that system coincides with \eqref{eq:gen-Els}, with in addition $\nabla\pi_u\,=\,\nabla\pi_W\,=\,\nabla\Pi^0$,
the idea is now to apply the stability estimate of Proposition \ref{p:stab-Els}: let us check that the assumptions of that statement are verified.

First of all, we notice that assumptions (a) and (b) are fulfilled, thanks to condition (i) in Theorem \ref{th:wp-inf}. Next, using the regularity properties
stated in items (i) and (ii) of that theorem and to the embedding $B^1_{\infty,1}\hookrightarrow W^{1,\infty}$, we see that,
for $j=1,2$, $u^{(j)}$ and $W_\eff^{(j)}$ both belong to $L^\infty_T\big(W^{1,\infty}\big)$, thus also assumptions (c) and (e) are checked.
Using item (iii) to study the pressure gradients $\nabla \Pi^0_{(j)}$, we infer that condition (d) is verified.

Thus, it remains us to check assumption (f) of Proposition \ref{p:stab-Els}. To begin with, we observe that
$u^{(j)}\in C_T\big(L^2\big)$ for $j=1,2$, owing to property (ii) in Theorem \ref{th:wp-inf}. Hence, also the difference $\de u\,:=\,u^{(1)}\,-\,u^{(2)}$
belongs to the same space $C_T\big(L^2\big)$.
Next, the initial density being the same,
one has that $\de\rho\,:=\,\rho^{(1)}\,-\,\rho^{(2)}$ verifies the equation
\[
\left(\d_t\,+\,u^{(1)}\cdot\nabla\right)\de\rho\,=\,-\,\de u\cdot\nabla\rho^{(2)}\qquad\qquad
\mbox{ with initial datum }\qquad \de\rho_{|t=0}\,=\,0\,.
\]
Using that $\de u\in C_T\big(L^2\big)$ and $\nabla\rho^{2}\in L^\infty\big([0,T]\times\R^2\big)$, together with classical properties of transport equations,
we discover that $\de\rho\in C_T\big(L^2\big)$ as well.

To conclude the proof, we must establish a similar property for $\de W_\eff\,:=\,W_\eff^{(1)}\,-\,W_\eff^{(2)}$, namely we have to prove
that $\de W_\eff\in C_T\big(L^2\big)$.
For this, we first observe that, by using the third equation in \eqref{eq:odd-Els} and
the divergence-free condition over $W_\eff^{(j)}$, we can write
\[
-\,\div\left(\frac{1}{\rho^{(j)}}\,\nabla\Pi^0_{(j)}\right)\,=\,\div\left(u^{(j)}\cdot\nabla W_\eff^{(j)}\right)\,.
\]
Then, Lemma 2 of \cite{D_2010} and the two properties $u^{(j)}\in C_T\big(L^2\big)$ and $W_\eff^{(j)}\in C_T\big(B^1_{\infty,1}\big)$
imply that $\nabla\Pi^0_{(j)}$ belongs to $C_T\big(L^2\big)$. 
This piece of information yields two important consequences. Firstly, we deduce that
$\nabla\de \Pi^0\,:=\,\nabla\Pi^0_{(1)}\,-\,\nabla\Pi^0_{(2)}$ belongs to the same space $C_T\big(L^2\big)$. In addition, by an inspection of the equation
for $W_\eff^{(2)}$, we deduce that
\[
 \d_tW_\eff^{(2)}\,=\,-\,\frac{1}{\rho^{(2)}}\,u^{(2)}\cdot\nabla W_\eff^{(2)}\,-\,\frac{1}{\rho^{(2)}}\,\nabla\Pi^0_{(2)}\quad \in\,C_T\big(L^2\big)\,\cap\,
 L^\infty\big([0,T]\times\R^2\big)\,.
\]
At this point, we compute the equation for
$\de W_\eff$: by repeating the computations leading to system \eqref{eq:de-sol}, we find
\begin{align*}
\big(\d_t\,+\,u^{(1)}\cdot\nabla\big)\de W_\eff\,=\,f\,,
\end{align*}
where we have defined
\[
 f\,:=\,-\,\frac{1}{\rho^{(1)}}\,\nabla\de\Pi^0\,-\,\frac{1}{\rho^{(1)}}\,\de\rho\,\d_tW_\eff^{(2)}\,-\,\de u\cdot\nabla W_\eff^{(2)}\,-\,
\frac{1}{\rho^{(1)}}\,\de\rho\,u^{(2)}\cdot\nabla W^{(2)}\,.
\]
Owing to the above established properties, we easily see that $f\in C_T\big(L^2\big)$. On the other hand, $\big(\de W_\eff\big)_{|t=0}\,=\,0$ by assumption.
Then, classical results on transport equations by divergence-free vector fields immediately imply that $\de W_\eff$ belongs to
the space $C_T\big(L^2\big)$.
In turn, also assumption (f) of Proposition \ref{p:stab-Els} is fulfilled.

In the end, we have seen that we can apply the stability estimates of Proposition \ref{p:stab-Els} to the Els\"asser system \eqref{eq:odd-Els}.
As the initial data for the two solutions coincide, we immediately get that $\de\rho\equiv0$, $\de u\equiv0$ and $\de W_\eff\equiv0$ in $C_T(L^2)$, so almost
everywhere on $[0,T]\times\R^2$. As the solutions are regular, those relations must hold everywhere on $[0,T]\times\R^2$.
In particular, this implies that $\rho^{(1)}\equiv\rho^{(2)}$ and $u^{(1)}\equiv u^{(2)}$, thus implying the sought uniqueness
of solutions to system \eqref{eq:odd}.
\end{proof}

\subsection{Improved lower bound for the lifespan} \label{ss:lifespan}

In this subsection, we prove the lower bound for the lifespan of the solutions claimed in Theorem \ref{t:lifespan}.
It is worth noticing that this bound does not follow from the classical hyperbolic theory. 
The main ingredient of its proof is the use of improved estimates for solutions to the transport equation in Besov spaces $B^0_{p,r}$ of regularity index $s=0$,
which we now recall.

Assume to have a smooth solution to the initial value problem
\begin{equation}\label{eq:TV}
\begin{cases}
\partial_t f + v \cdot \nabla f = g \\
f_{|t = 0} = f_0\,.
\end{cases}
\end{equation}
Then, restricting to the case $p=+\infty$ for simplicity, one has
\begin{equation}\label{est:transp_B^0}
\| f \|_{L^\infty_T(B^0_{\infty, r})}\, \leq\, C\, \bigg( \| f_0 \|_{B^0_{\infty, r}}\, +\, \| g \|_{L^1_T(B^0_{\infty, r})} \bigg)\;
\left( 1+\int_0^T\| \nabla v(\tau) \|_{L^\infty}{\rm d} \tau \right)\,.
\end{equation}
Thus, the Besov norm $B^0_{p,r}$ of $f$ grows \emph{linearly} in the Lipschitz norm of the transport field $v$, and not exponentially as is the case
in classical transport estimates.
Estimates \eqref{eq:TV} were originally proved by Vishik \cite{Vis}; later, Hmidi and Keraani \cite{HK} proposed a different technique of proof.

\medbreak
For the sake of clarity, we divide our proof in several steps.

\paragraph*{Step 1: low regularity norms.}
To begin with, we observe that, as a consequence of the continuation criterion, the lifespan of a solution does not depend on the regularity of the space in which
it is measured. Thus, fixed an initial datum $\big(\rho_0,u_0\big)$ satisfying the assumptions of Theorem \ref{th:wp-inf},
it is enough to estimate the lifespan of the corresponding solution in the endpoint space $B^1_{\infty,1}$.
This motivates us to define, for $t\geq0$, the (lower order) energy
\[
 \mbb E(t)\,:=\,E_1(t)\,=\,\left\|u(t)\right\|_{L^2}\,+\,\left\|\rho(t)\right\|_{L^\infty}\,+\,\left\|\o(t)\right\|_{B^{0}_{\infty,1}}\,+\,
\left\|\z_\eff(t)\right\|_{B^{0}_{\infty,1}}\,,
\]
with obvious changes in the notation when $t=0$. Here we have followed the notation introduced in Section \ref{s:a-priori} and have denoted by
$\o\,=\,\d_1u_2-\d_2u_1$ and $\z_\eff\,=\,\d_1W_{\eff,2}-\d_2W_{\eff,1}$ the vorticity functions of, respectively, the velocity field $u$ and
the effective velocity $W_\eff\,=\,u-2\nu_0\nabla^\perp\log\rho$.

Observe that the low regularity norms $\|u\|_{L^2}$ and $\|\rho\|_{L^\infty}$ are controlled by \eqref{est:u-L^2} and \eqref{est:rho-inf},
respectively. Then, we must only bound the $B^0_{\infty,1}$ norm of the vorticity functions.
We notice that $\o$ and $\z_\eff$ solve system \eqref{eq:syst-vort}: applying estimate \eqref{est:transp_B^0} to that system,
we find
\begin{align*}
\left\|\big(\o(t),\z_\eff(t)\big)\right\|_{B^0_{\infty,1}} 
\,&\lesssim\,
\left(1\,+\,\int^t_0\Big(\left\|\nabla u\right\|_{L^\infty}+\left\|\nabla W_\eff\right\|_{L^\infty}\Big)\,\dd\t\right) \\
&\qquad \times\Bigg(\left\|\big(\o(0),\z_\eff(0)\big)\right\|_{B^0_{\infty,1}}+ 
\int^t_0\left(\left\|F\right\|_{B^0_{\infty,1}}+\left\|G\right\|_{B^0_{\infty,1}}\right)\dd\t\Bigg)\,,
\end{align*}
where we have defined
\[
 F\,:=\,\nabla^\perp\left(\frac{1}{\rho}\right)\cdot\nabla\Pi^0 \qquad\qquad \mbox{ and }\qquad\qquad
 G\,:=\,\mc B\big(\nabla u,\nabla^2\log\rho\big)\,.
\]
Owing to the estimate
\[
 \left\|\nabla u\right\|_{L^\infty}+\left\|\nabla W_\eff\right\|_{L^\infty}\,\lesssim\,\mbb E\,,
\]
by adding \eqref{est:rho-inf} and \eqref{est:u-L^2} to the previous inequality we easily find
\begin{align} \label{est:life-E_1}
 \E(t)\,\lesssim\,\left(1\,+\,\int^t_0\E(\t)\,\dd\t\right)\,\left(\E(0)\,+\,
\int^t_0\left(\left\|F\right\|_{B^0_{\infty,1}}+\left\|G\right\|_{B^0_{\infty,1}}\right)\dd\t\right)\,.
\end{align}

For estimating the terms $F$ and $G$ in \eqref{est:life-E_1}, we follow the main lines of the analysis of Subsection \ref{ss:est_non-lin}.
Let us start by considering $F$. By taking advantage of \eqref{est:bilin-press} and embeddings, we see that
\[
\left\|F\right\|_{B^0_{\infty,1}}\,\lesssim\,\left\|\nabla\rho\right\|_{B^0_{\infty,1}}\,\left\|\nabla\Pi^0\right\|_{B^1_{\infty,1}}\,.
\]
In order to control the pressure gradient, we use \eqref{est:D-Pi^0-Besov} and \eqref{est:Pi_inf} to infer that
\begin{align*}
\left\|\nabla\Pi^0\right\|_{B^1_{\infty,1}}\,&\lesssim\,
\E\,\Big(1\,+\,\left\|\nabla\rho\right\|_{L^\infty}^4\,+\,\left\|\big(\nabla u,\nabla^2\rho\big)\right\|^2_{L^\infty}\Big)\,\lesssim\,\E\,\Big(1\,+\,\E^2\Big)\,,
\end{align*}
where we have used the interpolation inequality \eqref{est:interp-rho} together with the bound
\begin{align} \label{est:D^2rho-E}
\left\|\nabla^2\rho\right\|_{L^\infty}\,\lesssim\,\left\|\rho\right\|_{L^\infty}\,+\,\left\|\nabla\log\rho\right\|_{B^1_{\infty,1}}\,\lesssim\,
\left\|\rho\right\|_{L^\infty}\,+\,\left\|\big(u,W_\eff\big)\right\|_{B^1_{\infty,1}}\,\lesssim\,\E\,.
\end{align}
The proof of the previous bound is based on a decomposition into low and high frequencies and on the use of Lemma \ref{l:paralin}.
Notice that all the bounds depend implicitly on $\rho_*$, $\rho^*$ and $\|u_0\|_{L^2}$, which are quantities preserved by the flow of the equations. Notice also that,
by simple modifications, one could actually avoid the dependence on the $L^2$ norm of $u_0$.
All in all, we get
\begin{equation} \label{est:F}
\left\|F\right\|_{B^0_{\infty,1}}\,\lesssim\,\left\|\nabla\rho\right\|_{B^0_{\infty,1}}\,\E\,\Big(1\,+\,\E^2\Big)\,.
\end{equation}

Let us now focus on $G$. As already observed in Paragraph \ref{sss:bilinear}, we have
\begin{align*}
 \left\|G\right\|_{B^0_{\infty,1}}\,&\lesssim\,\left\|\nabla u\right\|_{L^\infty}\,\left\|\nabla^\perp\log\rho\right\|_{B^{1}_{\infty,1}}\,+\,
\left\|\nabla\nabla^\perp\log\rho\right\|_{L^\infty}\,\left\|u\right\|_{B^{1}_{\infty,1}}\,.
\end{align*}
This bound, together with Lemma \ref{l:paralin}, immediately implies the following estimate:
\begin{equation} \label{est:G}
  \left\|G\right\|_{B^0_{\infty,1}}\,\lesssim\,\left\|\nabla\rho\right\|_{B^1_{\infty,1}}\,\E\,.
\end{equation}
At this point, we observe that $\nabla\rho$ solves the following transport equation:
\[
 \d_t\nabla\rho\,+\,u\cdot\nabla\nabla\rho\,=\,-\,\sum_{j=1,2}\nabla u_j\,\d_j\rho\,.
\]
Thus, by classical transport estimates in Besov spaces, we gather, for any $t\geq0$, the bound
\begin{equation} \label{est:life_D-rho}
\left\|\nabla\rho(t)\right\|_{B^1_{\infty,1}}\,\lesssim\,\left\|\nabla\rho_0\right\|_{B^1_{\infty,1}}\,
\exp\left(C_0\int^t_0\left\|u\right\|_{B^2_{\infty,1}}\,\dd\t\right)\,.
\end{equation}

Plugging estimates \eqref{est:F} and \eqref{est:G} into \eqref{est:life-E_1} and using \eqref{est:life_D-rho}, in turn we find
\begin{align}
 \label{est:life-E_2}
\E(t)\,&\lesssim\,\left(1\,+\,\int^t_0\E(\t)\,\dd\t\right) \\
\nonumber
&\qquad\times\,\left(\E(0)\,+\,\left\|\nabla\rho_0\right\|_{B^1_{\infty,1}}
\int^t_0\E\,\Big(1\,+\,\E^2\Big)\,\exp\left(C_0\int^\t_0\left\|u\right\|_{B^2_{\infty,1}}\,\dd\s\right)\dd\t\right)\,.
\end{align}
It is apparent that the previous estimate presents a loss of one derivative, as, for controling
$\E\approx \|u\|_{B^1_{\infty,1}}\,+\,\|W_\eff\|_{B^1_{\infty,1}}$, we need an information over the higher order norm $\|u\|_{B^2_{\infty,1}}$.
A similar situation was handled in \cite{Cobb-F}: the basic idea is to make use of the continuation criterion in order to control
the higher order norm in terms of the lower order one.

\paragraph*{Step 2: bounding the high regularity norms.}
More precisley, let us introduce the higher order energy of the solution as
\[
 \H(t)\,:=\,\left\|u(t)\right\|_{L^2}\,+\,\left\|\rho(t)\right\|_{L^\infty}\,+\,\left\|\o(t)\right\|_{B^{1}_{\infty,1}}\,+\,
\left\|\z_\eff(t)\right\|_{B^{1}_{\infty,1}}\,.
\]
Then, as a consequence of \eqref{est:E_second} we have
\[
 \H(t)\,\lesssim\,\H(0)\,+\,\int^t_0A(\t)\,\H(\t)\,\dd\t\,.
\]
Now, resorting to inequality \eqref{est:D^2rho-E}, it is easy to see that
\[
 A(t)\,\lesssim\,\E(t)\,\left(1+\E(t)^{5/2}\right)\,\lesssim\,\E(t)\,\left(1+\E(t)^3\right)\,.
\]
for an implicit multiplicative constant which may depend also on $\rho_*$.
This in particular implies the estimate
\begin{equation} \label{est:H}
 \H(t)\,\lesssim\,\H(0)\,\exp\left(C_1\int^t_0\E(\t)\,\left(1\,+\,\E(\t)^3\right)\,\dd\t\right)\,.
\end{equation}


\paragraph*{Step 3: uniform bounds in a time interval $[0,T]$.}
With inequalities \eqref{est:life-E_2} and \eqref{est:H} at hand, we can conclude our argument.
We define the time $T>0$, up to which bounding uniformly the energy $\E$, as
\[
T\,:=\,\sup\left\{t>0\,\Big|\; \left\|\nabla\rho_0\right\|_{B^1_{\infty,1}}
\int^t_0\E(\t)\,\Big(1+\E(\t)^3\Big)\,\exp\left(C_0\int^\t_0\H(\s)\dd\s\right)\dd\t\leq\E(0)\,e^{C_0\H(0)t} \right\}.
\]
For convenience of notation, we also define the following quantities:
\[
\psi(t)\,:=\,e^{C_0\H(0)t}\qquad \mbox{ and }\qquad \Psi(t)\,:=\,\int^t_0\psi(\t)\,\dd\t\,=\,\frac{1}{C_0\,\H(0)}\,\left(e^{C_0\H(0)t}-1\right)\,.
\]

Now, it follows from \eqref{est:life-E_2} and the definition of $T$ that, for any $t\in[0,T]$, one has
\[
 \E(t)\,\leq\,C_2\,\E(0)\,\psi(t)\,\left(1\,+\,\int^t_0\E(\t)\,\dd\t\right)\,.
\]
Thus, an application of the Gr\"onwall lemma yields, for any $t\in[0,T]$, the bounds
\begin{align} \label{est:int-E}
\int^t_0\E(\t)\,\dd\t\,\leq\,e^{C_2\,\E(0)\,\Psi(t)}\,-\,1\qquad\qquad \mbox{ and }\qquad\qquad
\E(t)\,\leq\,C_2\,\E(0)\,\psi(t)\,e^{C_2\,\E(0)\,\Psi(t)}\,.
\end{align}
Without loss of generality, we can assume $C_2\geq1$. Notice also that $\psi(t)\geq1$ for any time $t\geq0$.
Hence, using the bounds in \eqref{est:int-E} into \eqref{est:H}, we discover that, for any $t\in[0,T]$, one has
\begin{align*}
\H(t)\,&\lesssim\,\H(0)\,\exp\left(C_3\,\E(0)\,\left(1+\E(0)^3\right)\,\int^t_0\psi^4(\t)\,e^{4C_2\E(0)\Psi(\t)}\,\dd\t\right)\,,
\end{align*}
where we have defined $C_3\,:=\,C_1\,(C_2)^3$.
Recalling the definition of the functions $\psi(t)$ and $\Psi(t)$,  we deduce in particular that
\begin{align*}
\H(t)\,&\lesssim\,\H(0)\exp\left(C_3\,\E(0)\,\left(1+\E(0)^3\right)\,\int^t_0e^{4C_0\H(0)\t}\,e^{4\frac{C_2\E(0)}{C_0\H(0)}\big(\exp(C_0\H(0)\t)-1\big)}\,\dd\t\right) \\
&\lesssim\,\H(0)\exp\left(K\,\H(0)\,\left(1+\E(0)^3\right)\,\int^t_0e^{K\H(0)\t}\,e^{K\big(\exp(K\H(0)\t)-1\big)}\,\dd\t\right)\,,
\end{align*}
where we have used the fact that $\E(0)\leq \H(0)$ and we have defined $K$ as the maximum between the constants $C_3$, $4C_0$ and $4C_2/C_0$.
Observing that
\begin{align*}
K\,\H(0)\,\int^t_0e^{K\H(0)\t}\,e^{K\big(\exp(K\H(0)\t)-1\big)}\,\dd\t\,&=\,
\frac{1}{K}\,\int^t_0\frac{\dd}{\dt}\left(e^{K\big(\exp(K\H(0)\t)-1\big)}\right)\,\dd\t \\
&=\,\frac{1}{K}\,\left(e^{K\big(\exp(K\H(0)\t)-1\big)}\,-\,1\right)\,,
\end{align*}
we finally conclude that, for any $t\in[0,T]$, one has
\begin{align}
\label{est:H_T}
\H(t)\,\leq\,C_4\,\H(0)\,e^{\mc E_0\,\Theta(t)}\,, 
\end{align}
where we have defined
\[
\mc E_0\,:=\, \frac{\left(1+\E(0)^3\right)}{K}\qquad\quad \mbox{ and }\qquad\quad 
\Theta(t)\,:=\,e^{K\big(\exp(K\H(0)t)-1\big)}\,-\,1\,.
\]

\paragraph*{Step 4: consequences of the uniform bounds.}
At this point, we use inequalities \eqref{est:int-E} and \eqref{est:H_T} to bound the integral appearing in the definition of the time $T$.
More precisely, 
using also that $\psi(t)$ is an increasing function of time, with $\psi(t)\geq1$ for all $t\geq0$, we see that, for any $t\in[0,T]$, we can estimate
\begin{align*}
&\int^t_0\E(\t)\,\Big(1\,+\,\E(\t)^3\Big)\,\exp\left(C_0\int^\t_0\H(\s)\,\dd\s\right)\dd\t \\
&\leq\,
C_5\,\E(0)\,\left(1+\E(0)^3\right)\,\psi(t)\,\int^t_0\psi^3(\t)\,e^{4C_2\E(0)\Psi(\t)}\,
\exp\left(C_0\,C_4\,\H(0)\int^\t_0e^{\mc E_0\,\Theta(\s)}\,\dd\s\right)\,\dd\t \\
&\leq\,
C_5\,K\,\mc E_0\,\E(0)\,\psi(t)\,\int^t_0e^{3C_0\H(0)\t}\,e^{4\frac{C_2\E(0)}{C_0\H(0)}\big(\exp(C_0\H(0)\t)-1\big)}\,
\exp\left(C_0\,C_4\,\H(0)\int^\t_0e^{\mc E_0\,\Theta(\s)}\,\dd\s\right)\,\dd\t\,,
\end{align*}
where, in passing from the first to the second inequality, we have simply used the definitions of the constant $K_0$ and of the functions $\psi(t)$ and $\Psi(t)$.
Now, if we set $K_1$ to be the maximum between the values $C_5\,K$, $3C_0$, $4C_2/C_0$ and $C_0C_4$, we can bound
the last line of the previous series of inequalities by
\begin{align} \label{eq:bigger}
K_1\,\mc E_0\,\H(0)\,\psi(t)\int^t_0e^{K_1\H(0)\t}\,e^{K_1\big(\exp(K_1\H(0)\t)-1\big)}\,\exp\left(K_1\,\H(0)\int^\t_0e^{\mc E_0\,\Theta(\s)}\,\dd\s\right)\,\dd\t\,.
\end{align}
Without loss of generality, we can assume that $K_1\geq1$. Then, we remark that the inequality
\[
e^{K_1\H(0)\t}\,\leq\,e^{K_1\big(\exp(K_1\H(0)\t)-1\big)}
\]
holds true for any time $\t\geq0$. Using this observation twice, we see that
\begin{align*}
 e^{K_1\H(0)\t}\,e^{K_1\big(\exp(K_1\H(0)\t)-1\big)}\,&\leq\,e^{2\,K_1\big(\exp(K_1\H(0)\t)-1\big)} \\
 &\leq\,e^{K_1\big(e^{2\,K_1\big(\exp(K_1\H(0)\t)-1\big)}-1\big)}\,.
\end{align*}
Inserting this estimate into \eqref{eq:bigger}, in turn we obtain, for any $t\in[0,T]$, the bound
\begin{align}
\label{est:integral-T}
 &\int^t_0\E(\t)\,\Big(1\,+\,\E(\t)^3\Big)\,\exp\left(C_0\int^\t_0\H(\s)\,\dd\s\right)\dd\t \\
\nonumber
&\leq\,K_1\,\mc E_0\,\H(0)\,\psi(t)\int^t_0e^{K_1\big(e^{2K_1\big(\exp(K_1\H(0)\t)-1\big)}-1\big)}\,
\exp\left(K_1\,\H(0)\int^\t_0e^{\mc E_0\,\Theta(\s)}\,\dd\s\right)\,\dd\t \\
\nonumber
&\leq\,K_1\,\mc E_0\,\H(0)\,\psi(t)\int^t_0e^{K_1\,\mc E_0\big(e^{2K_1\big(\exp(K_1\H(0)\t)-1\big)}-1\big)}\,
\exp\left(K_1\,\H(0)\int^\t_0e^{K_1\,\mc E_0\,\Theta_1(\s)}\,\dd\s\right)\,\dd\t\,,
\end{align}
where, in passing from the first to the second inequality, we have used the fact that $\mc E_0\geq1$ and where
$\Theta_1(t)$ is defined as
\[
\Theta_1(t)\,:=\,e^{2K_1\big(\exp(K_1\H(0)t)-1\big)}\,-\,1\,.
\]

Notice that, with those definitions, we have the equality
\[
e^{K_1\,\mc E_0\big(e^{2K_1\big(\exp(K_1\H(0)\t)-1\big)}-1\big)}\,=\,e^{K_1\,\mc E_0\,\Theta_1(\t)} \,,
\]
which in particular implies that
\begin{align*}
&K_1\,\H(0)\,e^{K_1\,\mc E_0\big(e^{2K_1\big(\exp(K_1\H(0)t)-1\big)}-1\big)}\,\exp\left(K_1\,\H(0)\int^t_0e^{K_1\,\mc E_0\,\Theta_1(\t)}\,\dd\t\right) \\
&\qquad\qquad\qquad\qquad\qquad\qquad\qquad\qquad\qquad\qquad
=\,\frac{\dd}{\dt}\exp\left(K_1\,\H(0)\int^t_0e^{K_1\,\mc E_0\,\Theta_1(\t)}\,\dd\t\right)\,.
\end{align*}
Inserting this relation into \eqref{est:integral-T}, we can explicitly compute the integral in the right-hand side and get
the following estimate:
\begin{align*}
&\int^t_0\E(\t)\,\Big(1\,+\,\E(\t)^3\Big)\,\exp\left(C_0\int^\t_0\H(\s)\,\dd\s\right)\dd\t \\
&\qquad\qquad\qquad\qquad\qquad\qquad\qquad
\leq\,\mc E_0\,\psi(t)\,\bigg(\exp\left(K_1\,\H(0)\int^t_0e^{K_1\,\mc E_0\,\Theta_1(\t)}\,\dd\t\right)\,-\,1\bigg)\,.
\end{align*}
Notice that, thanks to the definition of $\Theta_1(t)$, the integral inside the exponential term can be greatly simplified.
Indeed, we observe that
\[
\frac{\dd}{\dt}\Theta_1(t)\,=\,2\,(K_1)^2\,\H(0)\,e^{K_1\H(0)t}\,e^{2K_1\big(\exp(K_1\H(0)t)-1\big)}\,.
\]
Now, as the exponential terms are greater than or equal to $1$, we can estimate
\begin{align*}
\int^t_0e^{K_1\,\mc E_0\,\Theta_1(\t)}\,\dd\t\,\leq\,
\int^t_0e^{K_1\H(0)\t}\,e^{2K_1\big(\exp(K_1\H(0)\t)-1\big)}e^{K_1\,\mc E_0\,\Theta_1(\t)}\,\dd\t\,,
\end{align*}
from which we deduce the bound
\begin{align*}
K_1\,\H(0)\int^t_0e^{K_1\,\mc E_0\,\Theta_1(\t)}\,\dd\t\,\leq\,\frac{1}{2\,(K_1)^2\,\mc E_0}
\int^t_0\frac{\dd}{\dt}e^{K_1\,\mc E_0\,\Theta_1(\t)}\,\dd\t\,\leq\,e^{K_1\,\mc E_0\,\Theta_1(t)}-1\,,
\end{align*}
where we have also used the fact that $2(K_1)^2\mc E_0\geq1$.
In turn, this implies that, for all $t\in[0,T]$, one must have the inequality
\begin{align}
\label{est:integral-T2}
&\int^t_0\E(\t)\,\Big(1\,+\,\E(\t)^3\Big)\,\exp\left(C_0\int^\t_0\H(\s)\,\dd\s\right)\dd\t\,
\leq\,\mc E_0\,\psi(t)\,\bigg(\exp\left(e^{K_1\,\mc E_0\,\Theta_1(t)}-1\right)\,-\,1\bigg)\,.
\end{align}

\paragraph*{Step 5: estimate of the time $T$ (end of the proof).}
After the work carried out in the previous step, we are now in the position of giving an explicit lower bound for the time $T$, thus completing
the proof.

First of all, we remark that, by definition of $T$ (see Step 3) and continuity of all the quantities, at time $t=T$ we must have the equality
\[
\left\|\nabla\rho_0\right\|_{B^1_{\infty,1}}
\int^T_0\E(t)\,\Big(1+\E(t)^3\Big)\,\exp\left(C_0\int^t_0\H(\t)\dd\t\right)\dd t\,=\,\E(0)\,e^{C_0\H(0)T}\,.
\]
On the other hand, in Step 4 we have obtained the bound \eqref{est:integral-T2}: using also the definition of $\mc E_0\,=\,(1+\E(0)^3)/K$
(where $K$ is a universal constant) and the fact that $1\leq\psi(t)\leq \psi(T)$ for all $t\geq0$, that inequality in turn implies
\begin{align*}
\left\|\nabla\rho_0\right\|_{B^1_{\infty,1}}\,\frac{1+\E(0)^3}{K}\,\psi(T)\,\bigg(\exp\left(e^{K_1\,\frac{1+\E(0)^3}{K}\,\Theta_1(T)}-1\right)\,-\,1\bigg)\,\geq\,
\E(0)\,e^{C_0\H(0)T}\,. 
\end{align*}
As $\psi(T)\,=\,e^{C_0\H(0)T}$, from the previous bound we gather
\[
\left\|\nabla\rho_0\right\|_{B^1_{\infty,1}}\,\frac{1+\E(0)^3}{K}\,\bigg(\exp\left(e^{K_1\,\frac{1+\E(0)^3}{K}\,\Theta_1(T)}-1\right)\,-\,1\bigg)\,\geq\,
\E(0)\,.
\]
Of course, this estimate is satisfied as soon as $T>0$ verifies the more restrictive condition
\[
\left\|\nabla\rho_0\right\|_{B^1_{\infty,1}}\,\frac{1+\E(0)^3}{K}\,\bigg(\exp\left(e^{K_1\,\frac{1+\E(0)^3}{K}\,\Theta_1(T)}-1\right)\,-\,1\bigg)\,\geq\,
1+\E(0)\,. 
\]
Hence, we infer that there exists a suitable multiplicative constant $K_2>0$, depending
on the quantities $\rho_*$ and $\rho^*$ and on $\|u_0\|_{L^2}$ but not on the higher regularity norms of the initial datum, such that
\begin{equation} \label{est:expon}
e^{K_2\,(1+\E(0)^3)\,\Theta_1(T)}-1\,\geq\,\log\left(1\,+\,\frac{1}{K_2\,\left(1+\E(0)^2\right)\,\left\|\nabla\rho_0\right\|_{B^1_{\infty,1}}}\right)\,.
\end{equation}
Next we observe that, as the multiplicative constants are allowed to depend on the triplet $\big(\rho_*,\rho^*\,\|u_0\|_{L^2}\big)$, one has
\[
\E(0)\,\approx\,1\,+\,\left\|\left(\o_0,\Delta\rho_0\right)\right\|_{B^0_{\infty,1}}\qquad \mbox{ and }\qquad 
\H(0)\,\approx\,1\,+\,\left\|\left(\o_0,\Delta\rho_0\right)\right\|_{B^1_{\infty,1}}\,.
\]
Then, keeping in mind the definition of the function $\Theta_1(t)$, from \eqref{est:expon}
we can finally deduce the claimed lower bound for $T$. This completes the proof of Theorem \ref{t:lifespan}.


\appendix

\section{Appendix -- A primer on Littlewood-Paley theory} \label{app:LP}

In this appendix, we collect basic results from Fourier analysis and Littlewood-Paley theory which we have used in our study.
The material presented here is rather classical; unless otherwise specified, we refer to Chapters 2 and 3 of \cite{BCD} for a complete presentation.

\subsection{Littlewood-Paley decomposition and Besov spaces} \label{ss:LP}

We start by introducing the so-called Littlewood-Paley decomposition. 
For simplicity of exposition, we deal with the $\R^d$ case, with $d\geq1$, the only relevant case for us.

The Littlewood-Paley decomposition is based on a non-homogeneous dyadic partition of unity with
respect to the Fourier variable.  For this, 
we fix a smooth radial function $\chi$ supported in the ball $B(0,2)$ of center $0$ and radius $2$, equal to $1$ in a neighborhood of $B(0,1)$
and such that $r\mapsto\chi(r\,e)$ is nonincreasing over $\R_+$ for all unitary vectors $e\in\R^d$. Set
$\varphi\left(\xi\right)=\chi\left(\xi\right)-\chi\left(2\xi\right)$ and
$\vphi_j(\xi):=\vphi(2^{-j}\xi)$ for all $j\geq0$.
The dyadic blocks $(\Delta_j)_{j\in\Z}$ are defined\footnote{Throughout this work, we agree  that  $f(D)$ stands for 
the pseudo-differential operator $u\mapsto\mc{F}^{-1}[f(\xi)\,\what u(\xi)]$.}  by
$$
\Delta_j\,:=\,0\quad\mbox{ if }\; j\leq-2,\qquad\Delta_{-1}\,:=\,\chi(D)\qquad\mbox{ and }\qquad
\Delta_j\,:=\,\varphi(2^{-j}D)\quad \mbox{ if }\;  j\geq0\,.
$$
We  also introduce the following low frequency cut-off operator:
\begin{equation} \label{eq:S_j}
S_ju\,:=\,\chi(2^{-j}D)\,=\,\sum_{k\leq j-1}\Delta_{k}\qquad\mbox{ for }\qquad j\geq0\,.
\end{equation}
Note that $S_j$ is a convolution operator. More precisely, if we denote $\mc F(f)\,=\,\what f$ the Fourier transform of a function $f$ and $\mc F^{-1}$
the inverse Fourier transform, after defining
$$
K_0\,:=\,\mc F^{-1}\chi\qquad\qquad\mbox{ and }\qquad\qquad K_j(x)\,:=\,\mathcal{F}^{-1}[\chi (2^{-j}\cdot)] (x) = 2^{jd}K_0(2^j x)\,,
$$
we have, for all $j\in\N$ and all tempered distributions $u\in\mc S'$, the formula $S_ju\,=\,K_j\,*\,u$.
As a consequence, the $L^1$ norm of $K_j$ is independent of $j\geq0$, thus $S_j$ maps continuously $L^p$ into itself, for any $1 \leq p \leq +\infty$.
A similar remark applies also to the operators $\Delta_j$, for any $j\geq -1$.

Then, one can prove the \emph{Littlewood-Paley decomposition} of tempered distributions:
\[
\forall\,u\in\mc S'\,,\qquad\qquad  u\,=\,\sum_{j\geq-1}\Delta_ju\quad \mbox{ in the sense of }\quad \mc S'\,.
\]
One of the advantages of using Littlewood-Paley decomposition is that one disposes of very simple relations showing how derivatives
act on spectrally localized functions. This is expressed by the so-called \emph{Bernstein inequalities}, which are contained in the next lemma.
  \begin{lemma} \label{l:bern}
Let  $0<r<R$.   A constant $C>0$ exists so that, for any nonnegative integer $k$, any couple $(p,q)$ 
in $[1,+\infty]^2$, with  $p\leq q$,  and any function $u\in L^p$,  we  have, for all $\lambda>0$,
\begin{align*}
&{\Supp}\, \widehat u\, \subset\,   B(0,\lambda R)\,=\,\big\{\xi\in\R^d\,\big|\;|\xi|\leq\lambda R \big\}\qquad
\Longrightarrow\qquad
\|\nabla^k u\|_{L^q}\, \leq\,
 C^{k+1}\,\lambda^{k+d\left(\frac{1}{p}-\frac{1}{q}\right)}\,\|u\|_{L^p} \\[1ex]
&{\Supp}\, \widehat u   \, \subset\, \big\{\xi\in\R^d\,\big|\, r\lambda\leq|\xi|\leq R\lambda\big\} \\
&\qquad\qquad\qquad\qquad\qquad\qquad\qquad \Longrightarrow\qquad C^{-k-1}\,\lambda^k\|u\|_{L^p}\,\leq\,
\|\nabla^k u\|_{L^p}\,
\leq\,C^{k+1} \, \lambda^k\|u\|_{L^p}\,.
\end{align*}
\end{lemma}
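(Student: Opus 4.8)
The plan is to prove both inequalities by the classical device of writing a spectrally localised function as a convolution against a rescaled Schwartz kernel and then applying Young's convolution inequality, while recovering the exponential dependence $C^{k+1}$ on the order of differentiation by isolating a single derivative gain and iterating it. A scaling reduction is convenient: setting $u_\lambda(x):=u(x/\lambda)$ one has $\what{u_\lambda}(\xi)=\lambda^d\,\what u(\lambda\xi)$, so that a support condition at frequency scale $\lambda$ becomes one at scale $1$; the powers of $\lambda$ are then kept explicit through the elementary scaling identity $\|\lambda^d\,h(\lambda\,\cdot)\|_{L^m}=\lambda^{d(1-1/m)}\,\|h\|_{L^m}$, valid for every $m\in[1,+\infty]$.

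For the first inequality, I would fix a function $\phi\in C_c^\infty(\R^d)$ with $\phi\equiv1$ on $B(0,R)$ and set $h:=\mc F^{-1}\phi$, which is Schwartz. If $\Supp\what u\subset B(0,\lambda R)$ then $\what u=\phi(\lambda^{-1}\cdot)\,\what u$, hence $u=\lambda^d\,h(\lambda\,\cdot)*u$. The pure embedding case $k=0$ follows from Young's inequality with $1/m=1-(1/p-1/q)$, which (using $p\le q$, so that $m\ge1$) gives $\|u\|_{L^q}\le\lambda^{d(1/p-1/q)}\,\|h\|_{L^m}\,\|u\|_{L^p}$. For the single--derivative gain one writes $\d_j u=\lambda^{d+1}(\d_j h)(\lambda\,\cdot)*u$ and applies Young's inequality with $m=1$, obtaining $\|\d_j u\|_{L^p}\le\lambda\,\|\nabla h\|_{L^1}\,\|u\|_{L^p}$. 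Since $\d_j u$ is again spectrally supported in $B(0,\lambda R)$, iterating this bound $k$ times yields $\|\nabla^k u\|_{L^p}\le(C\lambda)^k\,\|u\|_{L^p}$; composing with the embedding estimate applied to $\nabla^k u$ produces exactly $\|\nabla^k u\|_{L^q}\le C^{k+1}\,\lambda^{k+d(1/p-1/q)}\,\|u\|_{L^p}$.

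For the second inequality the upper bound is immediate, the annulus $\{r\lambda\le|\xi|\le R\lambda\}$ being contained in $B(0,R\lambda)$, so that the iterated first--order estimate applies verbatim. The lower bound requires inverting the differentiation operator on the annulus. I would choose $\zeta\in C_c^\infty(\R^d)$ equal to $1$ on $\{r\le|\eta|\le R\}$ and vanishing near the origin, and define, for every multi-index with $|\alpha|=k$, the symbol $g_\alpha(\eta):=\dfrac{k!}{\alpha!}\,\overline{(i\eta)^\alpha}\,|\eta|^{-2k}\,\zeta(\eta)$. The multinomial identity $\sum_{|\alpha|=k}\frac{k!}{\alpha!}\,\eta^{2\alpha}=|\eta|^{2k}$ then gives $\sum_{|\alpha|=k}g_\alpha(\eta)\,(i\eta)^\alpha\equiv1$ on the annulus, and each $g_\alpha$ is Schwartz since $\zeta$ avoids the zero set of $|\eta|^{2k}$. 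Using $\what u=\zeta(\lambda^{-1}\cdot)\,\what u$ and rescaling yields the representation $u=\lambda^{-k}\sum_{|\alpha|=k}\lambda^d(\mc F^{-1}g_\alpha)(\lambda\,\cdot)*\d^\alpha u$, whence Young's inequality in $L^1$ gives $\|u\|_{L^p}\le C\,\lambda^{-k}\,\|\nabla^k u\|_{L^p}$, which is the desired lower bound.

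The principal difficulty is purely quantitative: obtaining the \emph{exponential} constant $C^{k+1}$ in the reverse inequality, rather than a factorially growing one. Each kernel $\mc F^{-1}g_\alpha$ has to be estimated in $L^1$ so that the sum over the $O(k^{d-1})$ multi-indices of length $k$, weighted by the coefficients $k!/\alpha!$, stays bounded by $C^{k+1}$; this amounts to controlling the relevant Schwartz seminorms of the $g_\alpha$ by integration by parts, tracking carefully how the derivatives of $|\eta|^{-2k}\,\zeta(\eta)$ and the combinatorial weights accumulate with $k$. By contrast, the analogous bookkeeping for the forward inequalities is sidestepped entirely by the iteration of the single first--order gain, which is why I would organise the whole argument around that one estimate.
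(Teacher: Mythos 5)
The paper gives no proof of this lemma at all: it is stated as classical background in the appendix, with a pointer to Chapters~2--3 of \cite{BCD}, and your argument is exactly the standard proof found there --- spectral cut-off written as convolution with a rescaled Schwartz kernel plus Young's inequality (iterating the single-derivative gain) for the direct estimates, and the multinomial identity $\sum_{|\alpha|=k}\frac{k!}{\alpha!}\,\eta^{2\alpha}=|\eta|^{2k}$ to invert $\nabla^k$ on the annulus. The quantitative bookkeeping you defer does indeed close: bounding $\|\mathcal{F}^{-1}g_\alpha\|_{L^1}$ only requires derivatives of $g_\alpha$ up to a \emph{fixed} order (say $d+1$, independent of $k$), which on the support of $\zeta$ produce at most a factor $C\,k^{d+1}\,\big(C(r,R)\big)^{k}$, while the combinatorial weights contribute $\sum_{|\alpha|=k}k!/\alpha!=d^{k}$ and the number of multi-indices is polynomial in $k$; all of these are geometric-times-polynomial in $k$ and hence absorbed into $C^{k+1}$.
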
   

By use of the Littlewood-Paley decomposition, we can define the class of (non-homogeneous) Besov spaces on $\R^d$.
\begin{defi} \label{d:B}
  Let $s\in\R$ and $1\leq p,r\leq+\infty$. The \emph{non-homogeneous Besov space}
$B^{s}_{p,r}\,=\,B^s_{p,r}(\R^d)$ is defined as the subset of tempered distributions $u\in\mc S'$ for which
$$
\|u\|_{B^{s}_{p,r}}\,:=\,
\left\|\left(2^{js}\,\|\Delta_ju\|_{L^p}\right)_{j\geq -1}\right\|_{\ell^r}\,<\,+\infty\,.
$$
\end{defi}
Besov spaces are interpolation spaces between Sobolev spaces. In fact, for any $k\in\N$ and~$p\in[1,+\infty]$,
we have the following chain of continuous embeddings:
$$
B^k_{p,1}\hookrightarrow W^{k,p}\hookrightarrow B^k_{p,\infty}\,,
$$
where  $W^{k,p}$ stands for the classical Sobolev space of $L^p$ functions with all the derivatives up to the order $k$ in $L^p$.
When $1<p<+\infty$, we can refine the previous result (this is the non-homogeneous version of Theorems 2.40 and 2.41 in \cite{BCD}): we have
$B^k_{p, \min (p, 2)}\hookrightarrow W^{k,p}\hookrightarrow B^k_{p, \max(p, 2)}$.
In particular, for all $s\in\R$, we deduce the equivalence $B^s_{2,2}\equiv H^s$, with equivalence of norms.
The class of Besov spaces also includes the classical H\"older spaces: for any $k\geq0$ and $\alpha\in\,]0,1[\,$, one has the equivalence
$C^{k,\alpha}\,\equiv\,B^{k+\alpha}_{\infty,\infty}$, with equivalence of norms.

As an immediate consequence of the first Bernstein inequality, one gets the following result, which generalises Sobolev embeddings
to the framework of Besov spaces.
\begin{prop}\label{p:embed}
The space $B^{s_1}_{p_1,r_1}$ is continuously embedded in the space $B^{s_2}_{p_2,r_2}$ for all indices satisfying $p_1\,\leq\,p_2$ and
$$
s_2\,<\,s_1-d\left(\frac{1}{p_1}-\frac{1}{p_2}\right)\qquad\qquad\mbox{ or }\qquad\qquad
s_2\,=\,s_1-d\left(\frac{1}{p_1}-\frac{1}{p_2}\right)\;\;\mbox{ and }\;\;r_1\,\leq\,r_2\,. 
$$
\end{prop}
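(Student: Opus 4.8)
The plan is to prove the embedding blockwise, reducing the entire statement to the first Bernstein inequality of Lemma \ref{l:bern} applied to each dyadic piece $\Delta_j u$. The starting observation is that, for every $j \geq -1$, the distribution $\Delta_j u$ is spectrally supported in a ball of the form $B(0, c\,2^j)$ for a fixed constant $c$ (a genuine ball when $j = -1$, an annulus contained in such a ball when $j \geq 0$). I would therefore apply the first Bernstein inequality with $k = 0$, exponents $p = p_1 \leq q = p_2$ and $\lambda \sim 2^j$ — the hypothesis $p_1 \leq p_2$ being exactly what makes this step legitimate — to obtain, with an implicit constant independent of $j$,
\[
\|\Delta_j u\|_{L^{p_2}} \,\lesssim\, 2^{j\sigma}\, \|\Delta_j u\|_{L^{p_1}}\,, \qquad\quad \sigma \,:=\, d\left(\frac{1}{p_1} - \frac{1}{p_2}\right) \,\geq\, 0\,.
\]
Multiplying by $2^{j s_2}$ turns this into the sequence inequality $2^{j s_2}\|\Delta_j u\|_{L^{p_2}} \lesssim 2^{j(s_2 + \sigma)}\|\Delta_j u\|_{L^{p_1}}$, from which both regimes of the statement follow by choosing the appropriate $\ell^r$ summation.

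In the endpoint case $s_2 = s_1 - \sigma$ with $r_1 \leq r_2$, the exponent $s_2 + \sigma$ equals $s_1$, so the right-hand side becomes $2^{j s_1}\|\Delta_j u\|_{L^{p_1}}$. Taking the $\ell^{r_2}$ norm in $j$ and invoking the elementary inclusion $\ell^{r_1} \hookrightarrow \ell^{r_2}$ (valid precisely when $r_1 \leq r_2$), I would conclude
\[
\|u\|_{B^{s_2}_{p_2, r_2}} \,\lesssim\, \big\|\big(2^{j s_1}\|\Delta_j u\|_{L^{p_1}}\big)_{j}\big\|_{\ell^{r_2}} \,\leq\, \big\|\big(2^{j s_1}\|\Delta_j u\|_{L^{p_1}}\big)_{j}\big\|_{\ell^{r_1}} \,=\, \|u\|_{B^{s_1}_{p_1, r_1}}\,.
\]

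In the strict case $s_2 < s_1 - \sigma$, I would set $\eps := s_1 - \sigma - s_2 > 0$, so that the block inequality reads $2^{j s_2}\|\Delta_j u\|_{L^{p_2}} \lesssim 2^{-j\eps}\,\big(2^{j s_1}\|\Delta_j u\|_{L^{p_1}}\big)$. Since $(2^{j s_1}\|\Delta_j u\|_{L^{p_1}})_j$ belongs to $\ell^{r_1} \hookrightarrow \ell^\infty$, each term $2^{j s_1}\|\Delta_j u\|_{L^{p_1}}$ is bounded by $\|u\|_{B^{s_1}_{p_1, r_1}}$, giving $2^{j s_2}\|\Delta_j u\|_{L^{p_2}} \lesssim 2^{-j\eps}\|u\|_{B^{s_1}_{p_1,r_1}}$. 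Taking the $\ell^{r_2}$ norm and using that $(2^{-j\eps})_{j \geq -1} \in \ell^{r_2}$ for every $r_2 \in [1, +\infty]$ (a convergent geometric series, resp.\ a bounded sequence when $r_2 = +\infty$) then closes the estimate, this time with no constraint linking $r_1$ and $r_2$.

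There is no serious obstacle here: the decisive structural input is simply that transferring a single dyadic block from $L^{p_1}$ to $L^{p_2}$ costs exactly the factor $2^{j\sigma}$, which is then absorbed either by the spectral gain $2^{-j\eps}$ in the strict regime or by the monotonicity $\ell^{r_1} \hookrightarrow \ell^{r_2}$ at the endpoint. The only points demanding a little care are purely book-keeping — verifying the uniform-in-$j$ spectral localisation of the blocks so that Bernstein applies with the same constant, and distinguishing the cases $r_2 < +\infty$ and $r_2 = +\infty$ when summing the geometric factor.
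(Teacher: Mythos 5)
Your proof is correct and follows exactly the route the paper intends: the paper states this proposition as ``an immediate consequence of the first Bernstein inequality,'' which is precisely your blockwise application of Lemma \ref{l:bern} with $k=0$, followed by the $\ell^{r_1}\hookrightarrow\ell^{r_2}$ monotonicity at the endpoint and the geometric gain $2^{-j\eps}$ in the strict case. Both case distinctions and the uniformity-in-$j$ of the Bernstein constant are handled properly, so there is nothing to add.
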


In particular, we get the following chain of continuous embeddings: provided that the triplet
$(s, p, r) \in \mathbb{R} \times [1, +\infty]^2$ satisfies the condition
\begin{equation}\label{eq:AnnLInfty}
s > \frac{d}{p} \qquad\qquad \text{ or } \qquad\qquad s = \frac{d}{p}\quad \text{ and }\quad r = 1\,,
\end{equation}
then we have
\begin{equation*}
B^s_{p,r} \hookrightarrow B^{s - \frac{d}{p}}_{\infty, r} \hookrightarrow B^0_{\infty, 1} \hookrightarrow L^\infty\,.
\end{equation*}
Likewise, under either the condition $s>1+d/p$ and $r\in[1,+\infty]$, or the condition $s=1+d/p$ and $r=1$, on has 
embedding $B^s_{p,r}\hookrightarrow W^{1,\infty}$.
In particular, condition \eqref{eq:Lip-cond} implies $B^s_{\infty,r}\hookrightarrow W^{1,\infty}$.

\subsection{Some elements of paradifferential calculus}\label{ss:NHPC}

We now presents some basic results in paradifferential calculus. Once again, we refer to Chapter 2 of \cite{BCD} for full details.

First of all, we introduce the paraproduct decomposition (after J.-M. Bony, see \cite{Bony}). 
Constructing the paraproduct operator relies on the observation that, 
formally, the product  of two tempered distributions $u$ and $v$ may be decomposed into 
\begin{equation*} 
u\,v\;=\;\mathcal{T}_u(v)\,+\,\mathcal{T}_v(u)\,+\,\mathcal{R}(u,v)\,,
\end{equation*}
where we have defined
$$
\mathcal{T}_u(v)\,:=\,\sum_jS_{j-1}u\,\Delta_j v\qquad\qquad\mbox{ and }\qquad\qquad
\mathcal{R}(u,v)\,:=\,\sum_j\sum_{|j'-j|\leq1}\Delta_j u\,\Delta_{j'}v\,.
$$
The above operator $\mc T$ is called ``paraproduct'' whereas
$\mc R$ is called ``remainder''.
The paraproduct and remainder operators have many nice continuity properties, some of them are collected in the next statement.
\begin{prop}\label{p:op}
For any $(s,p,r)\in\R\times[1,+\infty]^2$ and $\s>0$, the paraproduct operator 
$\mathcal{T}$ maps continuously $L^\infty\times B^s_{p,r}$ in $B^s_{p,r}$ and  $B^{-\s}_{\infty,\infty}\times B^s_{p,r}$ in $B^{s-\s}_{p,r}$.
Moreover, the following estimates hold:
$$
\|\mathcal{T}_u(v)\|_{B^s_{p,r}}\,\lesssim\,\|u\|_{L^\infty}\,\|\nabla v\|_{B^{s-1}_{p,r}}\qquad\mbox{ and }\qquad
\|\mathcal{T}_u(v)\|_{B^{s-\s}_{p,r}}\,\lesssim\,\|u\|_{B^{-\s}_{\infty,\infty}}\,\|\nabla v\|_{B^{s-1}_{p,r}}\,.
$$
For any $(s_1,p_1,r_1)$ and $(s_2,p_2,r_2)$ in $\R\times[1,+\infty]^2$ such that 
$s_1+s_2>0$, $1/p:=1/p_1+1/p_2\leq1$ and~$1/r:=1/r_1+1/r_2\leq1$,
the remainder operator $\mathcal{R}$ maps continuously~$B^{s_1}_{p_1,r_1}\times B^{s_2}_{p_2,r_2}$ into~$B^{s_1+s_2}_{p,r}$.
In the case $s_1+s_2=0$, provided $r=1$, the operator $\mathcal{R}$ is continuous from $B^{s_1}_{p_1,r_1}\times B^{s_2}_{p_2,r_2}$ with values
in $B^{0}_{p,\infty}$.
\end{prop}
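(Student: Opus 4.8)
The plan is to reduce the whole statement to the spectral localization of the building blocks $\Delta_j$ and $S_j$, after which everything follows from the Bernstein inequalities (Lemma \ref{l:bern}), H\"older's inequality in $x$, and discrete Young/H\"older inequalities on the frequency index. The guiding principle is that in both $\mathcal{T}_u(v)=\sum_j S_{j-1}u\,\Delta_j v$ and $\mathcal{R}(u,v)=\sum_j\sum_{|j'-j|\le1}\Delta_j u\,\Delta_{j'}v$ the generic summand is frequency-localized, so that applying $\Delta_q$ to the full series retains only $O(1)$ terms for the paraproduct and a half-infinite tail for the remainder. This \emph{almost orthogonality} converts a crude bound on each dyadic block into a bound on the full Besov norm.

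For the paraproduct I would first record that, for a suitable choice of the cut-off supports fixed in Subsection \ref{ss:LP}, each product $S_{j-1}u\,\Delta_j v$ is spectrally supported in a dyadic annulus of size $\sim 2^j$; moreover only the indices $j\ge1$ contribute, since $S_{j-1}=0$ for $j\le0$. Consequently $\Delta_q(S_{j-1}u\,\Delta_j v)$ vanishes unless $|q-j|\le N_0$ for a fixed integer $N_0$. The first estimate then follows from $\|S_{j-1}u\,\Delta_j v\|_{L^p}\le\|S_{j-1}u\|_{L^\infty}\,\|\Delta_j v\|_{L^p}\le\|u\|_{L^\infty}\,\|\Delta_j v\|_{L^p}$, using that $S_{j-1}$ is bounded on $L^\infty$ uniformly in $j$; multiplying by $2^{qs}$, summing over the $O(1)$ relevant indices and taking the $\ell^r$ norm in $q$ gives $\|\mathcal{T}_u(v)\|_{B^s_{p,r}}\lesssim\|u\|_{L^\infty}\|v\|_{B^s_{p,r}}$, and replacing $\|\Delta_j v\|_{L^p}$ by $2^{-j}\|\nabla\Delta_j v\|_{L^p}$ through the second Bernstein inequality (legitimate because $j\ge1$ forces annular support) yields the stated $\|\nabla v\|_{B^{s-1}_{p,r}}$ form. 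For the regularity-shifting estimate I would instead bound $\|S_{j-1}u\|_{L^\infty}\lesssim 2^{j\s}\|u\|_{B^{-\s}_{\infty,\infty}}$, obtained by summing the geometric series $\sum_{k\le j-2}\|\Delta_k u\|_{L^\infty}\lesssim\sum_{k\le j-2}2^{k\s}\|u\|_{B^{-\s}_{\infty,\infty}}$ (here $\s>0$ is exactly what makes the series converge); the extra factor $2^{j\s}$ shifts the output regularity from $s$ to $s-\s$.

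For the remainder the decisive structural difference is that when $|j'-j|\le1$ both factors live at frequency $\sim 2^j$, so $\Delta_j u\,\Delta_{j'}v$ is supported in a \emph{ball} $B(0,C2^j)$ rather than an annulus. Writing $\mathcal{R}(u,v)=\sum_j\widetilde R_j$ with $\widetilde R_j:=\Delta_j u\sum_{|j'-j|\le1}\Delta_{j'}v$, the block $\Delta_q\widetilde R_j$ vanishes unless $j\ge q-N_1$ for a fixed $N_1$. Setting $c_j:=2^{js_1}\|\Delta_j u\|_{L^{p_1}}\in\ell^{r_1}$ and $d_j:=2^{js_2}\|\Delta_j v\|_{L^{p_2}}\in\ell^{r_2}$, H\"older's inequality with $1/p=1/p_1+1/p_2$ gives $\|\widetilde R_j\|_{L^p}\lesssim 2^{-j(s_1+s_2)}c_j\,d_j$ up to a harmless index shift, whence $2^{q(s_1+s_2)}\|\Delta_q\mathcal{R}(u,v)\|_{L^p}\lesssim\sum_{j\ge q-N_1}2^{(q-j)(s_1+s_2)}c_j\,d_j$. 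Since $s_1+s_2>0$, the kernel $2^{(q-j)(s_1+s_2)}\mathbf{1}_{\{j\ge q-N_1\}}$ is summable in $q-j$, so discrete Young's inequality in $\ell^r$ followed by $\|(c_j d_j)_j\|_{\ell^r}\le\|c\|_{\ell^{r_1}}\|d\|_{\ell^{r_2}}$, with $1/r=1/r_1+1/r_2$, produces the claimed continuity into $B^{s_1+s_2}_{p,r}$.

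The main obstacle is precisely the summability of this discrete convolution, which is where the hypothesis $s_1+s_2>0$ is spent: at the borderline $s_1+s_2=0$ the kernel is merely bounded and the $\ell^r\to\ell^r$ gain is lost. In that case I would settle for the weaker target $B^0_{p,\infty}$, estimating $\sup_q\sum_{j\ge q-N_1}c_j\,d_j\le\sum_j c_j\,d_j=\|(c_j d_j)_j\|_{\ell^1}$, finite under the assumption $1/r_1+1/r_2=1$ (that is, $r=1$) by H\"older. Beyond this endpoint, the only remaining care is to make the spectral-support constants $N_0,N_1$ explicit from the supports of $\chi$ and $\varphi$, and to treat the low-frequency index separately wherever the second Bernstein inequality is invoked; neither point is a genuine difficulty.
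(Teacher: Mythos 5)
Your proof is correct, and it is precisely the standard almost-orthogonality argument of Chapter 2 of \cite{BCD}, which is exactly what the paper relies on: the paper offers no proof of this proposition at all, only the citation to \cite{BCD}, so your argument coincides with the canonical proof behind the statement. The one point to watch is that the annular spectral support of $S_{j-1}u\,\Delta_j v$ --- indispensable for the paraproduct bounds when $s\le 0$, since a ball-supported product would leave a half-infinite sum $\sum_{j\ge q-N_0}2^{(q-j)s}$ that diverges --- requires the sharper support conditions on $\chi$ and $\varphi$ adopted in \cite{BCD} (e.g.\ $\Supp\chi\subset B(0,4/3)$, $\Supp\varphi\subset\{3/4\le|\xi|\le 8/3\}$) rather than the looser normalisation written in the paper's appendix; you correctly hedge against this when you ask for ``a suitable choice of the cut-off supports,'' so this is a flaw of the appendix's setup, not of your argument.
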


The previous continuity properties have not been directly used in the paper, but they are a tool to prove the so-called \emph{tame estimates}, which
we are now going to state.
From now one, we specialise on the case $p=+\infty$, as this is the only case of interest for our study.

\begin{cor}\label{c:tame}
Let $(s, r)\in\R\times[1,+\infty]$ be such that that $s > 0$. Then, we have
\begin{equation*}
\forall\, f, g \in B^s_{\infty,r}\,, \quad\qquad \| fg \|_{B^s_{\infty,r}}\, \lesssim \,\| f \|_{L^\infty}\, \|g\|_{B^s_{\infty,r}}\, +\,
\| f \|_{B^s_{\infty,r}} \,\| g \|_{L^\infty}\,.
\end{equation*}
\end{cor}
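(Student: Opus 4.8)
The plan is to establish the estimate \emph{via} Bony's paraproduct decomposition, writing
\begin{equation*}
fg\,=\,\mathcal{T}_f(g)\,+\,\mathcal{T}_g(f)\,+\,\mathcal{R}(f,g)\,,
\end{equation*}
and controlling each of the three contributions separately by means of Proposition \ref{p:op}. Since $s>0$, the embeddings $B^s_{\infty,r}\hookrightarrow B^0_{\infty,1}\hookrightarrow L^\infty$ guarantee that $f$ and $g$ are bounded, so that the product $fg$ is well defined and the decomposition above makes sense.

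For the two paraproduct terms I would invoke the first part of Proposition \ref{p:op}, according to which $\mathcal{T}$ maps $L^\infty\times B^s_{\infty,r}$ continuously into $B^s_{\infty,r}$. Applied to $\mathcal{T}_f(g)$, with $f$ playing the role of the $L^\infty$ factor and $g$ that of the $B^s_{\infty,r}$ factor, this gives $\|\mathcal{T}_f(g)\|_{B^s_{\infty,r}}\lesssim\|f\|_{L^\infty}\,\|\nabla g\|_{B^{s-1}_{\infty,r}}\lesssim\|f\|_{L^\infty}\,\|g\|_{B^s_{\infty,r}}$; exchanging the roles of $f$ and $g$ yields the symmetric bound $\|\mathcal{T}_g(f)\|_{B^s_{\infty,r}}\lesssim\|g\|_{L^\infty}\,\|f\|_{B^s_{\infty,r}}$. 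Both terms are thus already of the desired form, and no positivity of $s$ is needed at this stage.

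The remainder is where the hypothesis $s>0$ really enters, and it is the only genuinely delicate point. Using the embedding $L^\infty\hookrightarrow B^0_{\infty,\infty}$, I would regard $f$ as an element of $B^0_{\infty,\infty}$ and apply the continuity of $\mathcal{R}$ from Proposition \ref{p:op} with the choice $(s_1,p_1,r_1)=(0,\infty,\infty)$ and $(s_2,p_2,r_2)=(s,\infty,r)$. The compatibility conditions are met, since $1/p=0\leq1$, $1/r=0+1/r\leq1$, and crucially $s_1+s_2=s>0$, which places us in the main (non-endpoint) case of the proposition. This produces $\|\mathcal{R}(f,g)\|_{B^s_{\infty,r}}\lesssim\|f\|_{B^0_{\infty,\infty}}\,\|g\|_{B^s_{\infty,r}}\lesssim\|f\|_{L^\infty}\,\|g\|_{B^s_{\infty,r}}$.

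Finally, I would combine the three bounds by the triangle inequality to obtain $\|fg\|_{B^s_{\infty,r}}\lesssim\|f\|_{L^\infty}\,\|g\|_{B^s_{\infty,r}}+\|g\|_{L^\infty}\,\|f\|_{B^s_{\infty,r}}$, which is exactly the claimed tame estimate. The main (and essentially only) obstacle is ensuring the summability of the remainder series: this is precisely the step that breaks down at $s=0$ unless one pays with an extra summability index, and here the strict inequality $s>0$ circumvents the difficulty cleanly.
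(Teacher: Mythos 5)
Your proof is correct and is precisely the argument the paper intends: Corollary \ref{c:tame} is stated as a direct consequence of Proposition \ref{p:op}, obtained by splitting $fg$ with Bony's decomposition, bounding both paraproducts through the $L^\infty\times B^s_{\infty,r}\to B^s_{\infty,r}$ continuity, and bounding the remainder through the embedding $L^\infty\hookrightarrow B^0_{\infty,\infty}$ together with the continuity $B^0_{\infty,\infty}\times B^s_{\infty,r}\to B^s_{\infty,r}$, valid since $s>0$. Your identification of the remainder term as the sole place where $s>0$ is needed matches the paper's remark that $B^0_{\infty,1}$ fails to be an algebra exactly because $\mathcal{R}(f,g)$ is not controlled there.
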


Notice that, as a consequence of Proposition \ref{p:op} and relation \eqref{eq:AnnLInfty}, the spaces $B^s_{\infty,r}$ are Banach algebras as long as
$s > 0$.
On the contrary, the space $B^0_{\infty, 1}$ \emph{fails} to be an algebra. As a matter of fact, if $f, g \in B^0_{\infty, 1}$, 
we can use Proposition \ref{p:op} to bound the paraproducts $\mc T_f(g)$ and $\mc T_g(f)$, but the
remainder $\mathcal{R}(f, g)$ is not controlled in $B^0_{\infty,1}$.

\medbreak
Next, we switch our attention to paralinearisation results. Many are the statements available in the literature; in the present paper,
we have only needed the following lemma, borrowed from \cite{D-F} (see Proposition 3 therein).
\begin{lemma} \label{l:paralin}
Fix an open interval $I\subset\R$ and let $F:I\to\R$ be a smooth function. Then, for all compact subinterval $J\subset I$,
any $s>0$ and any $r\in[1,+\infty]$, there exists a constant $C>0$ such that the following fact holds true:
for all functions $a$ taking values in $J$ and whose gradient satisfies $\nabla a\in B^{s-1}_{\infty,r}$, one has
$\nabla\big(F\circ a\big)\,\in\,B^{s-1}_{\infty,r}$ , together with the estimate
\[
 \left\|\nabla\big(F\circ a\big)\right\|_{B^{s-1}_{\infty,r}}\,\leq\,C\,\left\|\nabla a\right\|_{B^{s-1}_{\infty,r}}\,.
\]
\end{lemma}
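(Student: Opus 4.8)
The plan is to reduce the statement to a product estimate via the chain rule, and then to control the resulting product with Bony's decomposition. Since $F$ is smooth and $a$ takes all its values in the compact interval $J$, the composition $F'\circ a$ is bounded, with $\|F'\circ a\|_{L^\infty}\leq\sup_J|F'|<\infty$; moreover, for $s>0$ the hypothesis $\nabla a\in B^{s-1}_{\infty,r}$ together with $a\in L^\infty$ forces $a\in B^s_{\infty,r}$ (the blocks $\Delta_{-1}a$ are controlled by $\|a\|_{L^\infty}$, while for $j\geq0$ one recovers $\|\Delta_j a\|_{L^\infty}\lesssim2^{-j}\|\Delta_j\nabla a\|_{L^\infty}$ from the Bernstein inequalities of Lemma \ref{l:bern}), so that the chain rule $\nabla(F\circ a)=(F'\circ a)\,\nabla a$ is justified in $\mathcal D'$. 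It then suffices to prove the homogeneous bound $\|(F'\circ a)\,\nabla a\|_{B^{s-1}_{\infty,r}}\lesssim\|\nabla a\|_{B^{s-1}_{\infty,r}}$, the constant depending only on $F$, $J$, $s$ and $r$.

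First I would Bony-decompose the product,
\[
(F'\circ a)\,\nabla a\,=\,\mathcal T_{F'\circ a}(\nabla a)\,+\,\mathcal T_{\nabla a}(F'\circ a)\,+\,\mathcal R\big(F'\circ a,\nabla a\big)\,.
\]
The first paraproduct is the harmless term: by Proposition \ref{p:op}, $\mathcal T$ maps $L^\infty\times B^{s-1}_{\infty,r}$ continuously into $B^{s-1}_{\infty,r}$ for \emph{every} real value of the index $s-1$, whence $\|\mathcal T_{F'\circ a}(\nabla a)\|_{B^{s-1}_{\infty,r}}\lesssim\|F'\circ a\|_{L^\infty}\|\nabla a\|_{B^{s-1}_{\infty,r}}$. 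Notice that this costs no regularity at all on $F'\circ a$, which is exactly what makes it available in the delicate range $s<1$, where $\nabla a$ sits in a space of negative index.

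For the two remaining terms one genuinely needs positive Besov regularity of $F'\circ a$. When $s>1$ I would proceed by induction on $\lceil s\rceil$: the tame estimate of Corollary \ref{c:tame} controls $\mathcal T_{\nabla a}(F'\circ a)+\mathcal R(F'\circ a,\nabla a)$ by $\|\nabla a\|_{L^\infty}\,\|F'\circ a\|_{B^{s-1}_{\infty,r}}$, and since $\nabla(F'\circ a)=(F''\circ a)\,\nabla a$ has the very same structure as the quantity under study (with $F''$ in place of $F'$ and $s$ lowered by one unit), the inductive hypothesis yields $\|F'\circ a\|_{B^{s-1}_{\infty,r}}\lesssim1+\|\nabla a\|_{B^{s-2}_{\infty,r}}\lesssim1+\|\nabla a\|_{B^{s-1}_{\infty,r}}$, closing the loop after finitely many steps. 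The base case, and hence the heart of the matter, is the subcritical window $0<s\leq1$.

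The hard part will be precisely this subcritical regime $s\in(0,1]$, where $s-1\leq0$ and $\nabla a$ has non-positive regularity. There the product $(F'\circ a)\,\nabla a$ cannot be handled as a generic product: the remainder $\mathcal R(F'\circ a,\nabla a)$ fails the index condition $s_1+s_2>0$ required by Proposition \ref{p:op} as soon as the natural regularities $(s,s-1)$ are inserted (indeed $2s-1\leq0$ once $s\leq1/2$). The way around is to exploit that $(F'\circ a)\,\nabla a$ is not arbitrary but the gradient of the composition $F\circ a$, and to invoke the Bony--Meyer paralinearization theorem (see Chapter 2 of \cite{BCD}): writing $F\circ a=\mathcal T_{F'\circ a}\,a+\mathcal R_F(a)$, the paralinearization remainder $\mathcal R_F(a)$ gains regularity and lies in $B^{2s}_{\infty,r}$, so that after differentiation one is left with a leading paraproduct term — already controlled in the second paragraph — together with contributions that are either smoother (landing in $B^{2s-1}_{\infty,r}\hookrightarrow B^{s-1}_{\infty,r}$) or of strictly lower order in the derivatives of $F$, hence amenable to the same analysis. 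Carrying out this decomposition with a \emph{homogeneous} constant — equivalently, reproducing the telescoping argument of Proposition 3 of \cite{D-F} — is the only genuinely delicate point; everything else reduces to the continuity properties of $\mathcal T$ and $\mathcal R$ recalled in Proposition \ref{p:op} and to the tame product bound of Corollary \ref{c:tame}.
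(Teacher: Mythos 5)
Your reduction to the product $(F'\circ a)\,\nabla a$, the treatment of the paraproduct $\mathcal T_{F'\circ a}(\nabla a)$, and the observation that the remainder $\mathcal R(F'\circ a,\nabla a)$ falls outside Proposition \ref{p:op} once $s\leq1/2$ are all correct; nevertheless the proposal does not prove the lemma, and the point you postpone in your last sentence is not a technicality --- it is the whole statement. (For context: the paper itself gives no proof; Lemma \ref{l:paralin} is quoted from \cite{D-F}, Proposition 3, i.e.\ precisely the telescoping argument you appeal to at the end.) The estimate to be proved is \emph{homogeneous}: the constant depends only on $F$, $J$, $s$, $r$, so the right-hand side is linear in $\nabla a$ and must vanish as $\|\nabla a\|_{B^{s-1}_{\infty,r}}\to0$, uniformly over all $a$ valued in $J$. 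Both of your regimes lose this structure. For $s>1$, the tame estimate leaves the term $\|\nabla a\|_{L^\infty}\,\|F'\circ a\|_{B^{s-1}_{\infty,r}}$, and your inductive bound $\|F'\circ a\|_{B^{s-1}_{\infty,r}}\lesssim1+\|\nabla a\|_{B^{s-1}_{\infty,r}}$ turns it into $\|\nabla a\|_{L^\infty}\big(1+\|\nabla a\|_{B^{s-1}_{\infty,r}}\big)$; controlling $\|\nabla a\|_{L^\infty}$ by the embedding into $B^{s-1}_{\infty,r}$, as your argument implicitly does, the loop closes only onto $C\,\|\nabla a\|_{B^{s-1}_{\infty,r}}\big(1+\|\nabla a\|_{B^{s-1}_{\infty,r}}\big)$, which is strictly weaker than the claim (the ``constant'' grows with $a$). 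For $0<s\leq1$, the Bony--Meyer paralinearisation theorem bounds the remainder $\mathcal R_F(a)$ in $B^{2s}_{\infty,r}$ by $C(\|a\|_{L^\infty})\,\|a\|_{B^s_{\infty,\infty}}\,\|a\|_{B^s_{\infty,r}}$; this quantity does not vanish when $\nabla a\equiv0$ (for constant $a$ it is of size $C(J)^2$), so no inequality of the form $\|\nabla\mathcal R_F(a)\|_{B^{s-1}_{\infty,r}}\leq C\|\nabla a\|_{B^{s-1}_{\infty,r}}$ can be extracted from it. In short, your scheme would give membership of $\nabla(F\circ a)$ in $B^{s-1}_{\infty,r}$ together with an inhomogeneous bound, but not the stated estimate.

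The missing idea is a mechanism that places exactly one factor $\Delta_ja$ (hence, by Lemma \ref{l:bern}, one factor $2^{-j}\|\Delta_j\nabla a\|_{L^\infty}$) in every term, all other factors being controlled only through $\sup_J|F^{(k)}|$ and $\|a\|_{L^\infty}\leq C(J)$: this is what Meyer's first linearisation $F(a)=F(S_0a)+\sum_{j\geq0}m_j\,\Delta_ja$, with $m_j:=\int_0^1F'\big(S_ja+\tau\Delta_ja\big)\,{\rm d}\tau$, is designed to do, and it is the content of the proof in \cite{D-F} that you would have to reproduce rather than invoke. Incidentally, your induction for $s>1$ \emph{can} be salvaged without telescoping, but only by exploiting $a\in L^\infty$ through interpolation instead of the crude embeddings you used: for $s>1$ one has
\[
\left\|\nabla a\right\|_{L^\infty}\,\lesssim\,\left\|a\right\|_{L^\infty}^{1-1/s}\,\left\|\nabla a\right\|_{B^{s-1}_{\infty,r}}^{1/s}
\qquad\mbox{ and }\qquad
\left\|\nabla a\right\|_{B^{s-2}_{\infty,r}}\,\lesssim\,\left\|a\right\|_{L^\infty}^{1/s}\,\left\|\nabla a\right\|_{B^{s-1}_{\infty,r}}^{1-1/s}\,,
\]
so that the dangerous product $\|\nabla a\|_{L^\infty}\|\nabla a\|_{B^{s-2}_{\infty,r}}$ is in fact $\lesssim\|a\|_{L^\infty}\|\nabla a\|_{B^{s-1}_{\infty,r}}\leq C(J)\,\|\nabla a\|_{B^{s-1}_{\infty,r}}$ --- the key being to keep the two different norms of $\nabla a$ separate before estimating them. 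But even so, the induction bottoms out at the range $0<s\leq1$, which your paralinearisation route cannot close for the reason explained above. As it stands, the proposal establishes an inhomogeneous variant of Lemma \ref{l:paralin}, not the lemma itself.
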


Finally, we consider commutator estimates.
We only focus on the commutator between the frequency localisation operator $\Delta_j$ and a trasport operator $\d_t+v\cdot\nabla$.
We refer to Lemma 2.100 and Remark 2.101 of \cite{BCD} for a proof.
Notice that estimate \eqref{eq:lCommBLinfty} below is not contained in \cite{BCD}, but it easily follows by slight
modifications to the arguments of the proof (see in particular the control of the term $R^3_j$ at pages 113-114 of \cite{BCD}).


\begin{lemma}\label{l:CommBCD}
Let $(s,r)\in\R\times[1,+\infty]$ satisfy condition \eqref{eq:Lip-cond}.
Take a vector field $v\in B^s_{\infty,r}$ and denote by
$\big[ v \cdot \nabla, \Delta_j \big]\,=\,(v \cdot \nabla) \Delta_j - \Delta_j (v \cdot \nabla)$ the commutator between the transport operator $v\cdot\nabla$ and the frequency localisation operator $\Delta_j$.

Then, the following estimates hold true:
\begin{equation*}
\forall\, f \in B^s_{\infty,r}\,, \qquad\qquad  2^{js}\left\| \big[ v \cdot \nabla, \Delta_j \big] f  \right\|_{L^\infty}\,\lesssim\,
c_j \Big( \|\nabla v \|_{L^\infty} \| f \|_{B^s_{\infty,r}}\, +\, \|\nabla v \|_{B^{s-1}_{\infty, r}} \|\nabla f \|_{L^\infty} \Big)\,,
\end{equation*}
and also
\begin{equation}\label{eq:lCommBLinfty}
\forall\, g \in B^{s-1}_{\infty, r}\,, \qquad\qquad
2^{j(s-1)} \left\| \big[ v \cdot \nabla, \Delta_j \big] g  \right\|_{L^\infty}\, \lesssim\, d_j \Big( \|\nabla v \|_{L^\infty} \| g \|_{B^{s-1}_{\infty, r}} +
\|\nabla v \|_{B^{s-1}_{\infty, r}} \| g \|_{L^\infty} \Big)\,,
\end{equation}
where the sequences $\big(c_j\big)_{j\geq -1}$ and $\big(d_j\big)_{j\geq -1}$ are sequences belonging to the unit ball of $\ell^r$. 
\end{lemma}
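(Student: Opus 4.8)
The plan is to derive both inequalities from Bony's paraproduct decomposition of the transport term, following the scheme behind Lemma 2.100 of \cite{BCD}. Writing $v\cdot\nabla h=\sum_k v^k\,\d_k h$ and decomposing each product as $v^k\d_k h=\mc T_{v^k}\d_k h+\mc T_{\d_k h}v^k+\mc R(v^k,\d_k h)$, the commutator splits into the five usual pieces
\begin{align*}
\big[v\cdot\nabla,\Delta_j\big]h\,=\,\sum_k\Big(&\big[\mc T_{v^k},\Delta_j\big]\d_k h\,+\,\mc T_{\d_k\Delta_j h}v^k\,-\,\Delta_j\mc T_{\d_k h}v^k \\
&+\,\mc R\big(v^k,\d_k\Delta_j h\big)\,-\,\Delta_j\,\mc R\big(v^k,\d_k h\big)\Big)\,=:\,\sum_{i=1}^{5}R^i_j\,.
\end{align*}
For $h=f$ and weight $2^{js}$ the resulting bound is exactly Lemma 2.100 together with Remark 2.101 of \cite{BCD}, so the first estimate requires no new work. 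The whole task therefore reduces to proving \eqref{eq:lCommBLinfty}: I would run the same decomposition with $h=g$ and weight $2^{j(s-1)}$, the essential new feature being that the second contribution must be measured through $\|g\|_{L^\infty}$ rather than $\|\nabla g\|_{L^\infty}$, using instead the extra regularity carried by $v\in B^s_{\infty,r}$.

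First I would dispose of the routine terms. For $R^1_j$ a first-order Taylor expansion of the kernel of $\Delta_j$ produces the standard gain $\big\|[S_{m-1}v^k,\Delta_j]\d_k\Delta_{m}g\big\|_{L^\infty}\lesssim 2^{-j}\|\nabla v\|_{L^\infty}\|\d_k\Delta_{m}g\|_{L^\infty}$ for $|m-j|\le N$, which after summation yields the $\|\nabla v\|_{L^\infty}\,\|g\|_{B^{s-1}_{\infty,r}}$ term. In $R^2_j$ the low-frequency cut-off $S_{m-1}(\d_k\Delta_j g)$ is nontrivial only for $m\gtrsim j$, and the remainder pieces $R^4_j,R^5_j$ are controlled by the continuity of $\mc R$ from Proposition \ref{p:op}, whose spectral support again forces summation over $m\gtrsim j$; in each case a Bernstein inequality (Lemma \ref{l:bern}) converts the derivative falling on $g$ into a factor $2^{m}$, and the high-frequency summation recombines it with the Littlewood--Paley blocks of $v$ to produce either $\|\nabla v\|_{L^\infty}\|g\|_{B^{s-1}_{\infty,r}}$ or $\|\nabla v\|_{B^{s-1}_{\infty,r}}\|g\|_{L^\infty}$ after a Young-type convolution estimate on the frequency sums.

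The one genuinely different term, which I expect to be the main obstacle, is $R^3_j=-\sum_k\Delta_j\mc T_{\d_k g}v^k$; this is precisely the term flagged at pages 113--114 of \cite{BCD}. Classically one bounds $\|S_{m-1}\d_k g\|_{L^\infty}\lesssim\|\nabla g\|_{L^\infty}$, but since we now want only $\|g\|_{L^\infty}$ I would write $S_{m-1}\d_k g=\d_k S_{m-1}g$ and use Bernstein to get $\|S_{m-1}\d_k g\|_{L^\infty}\lesssim 2^{m}\|g\|_{L^\infty}$. Because $\Delta_j\mc T_{\d_k g}v^k$ restricts the sum to $m\sim j$, the extra factor $2^{m}\sim 2^{j}$ is absorbed by the weight $2^{j(s-1)}$ into $2^{js}\|\Delta_j v\|_{L^\infty}\sim 2^{j(s-1)}\|\Delta_j\nabla v\|_{L^\infty}$, which is an $\ell^r$ sequence bounded by $\|\nabla v\|_{B^{s-1}_{\infty,r}}$; this is exactly where the hypothesis $v\in B^s_{\infty,r}$ is spent. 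The delicate point hides in the endpoint case $s=r=1$ of \eqref{eq:Lip-cond}, where $g\in B^0_{\infty,1}$ is no longer a Banach algebra and the remainder terms $R^4_j,R^5_j$ sit at the critical regularity sum zero: there one must invoke the sharp endpoint continuity of $\mc R$ with values in $B^0_{\infty,\infty}$ (available precisely because $r=1$) and carefully track the $\ell^1$-summability coming from the gained $2^{m}$ factors in order to reconstruct an admissible sequence $(d_j)$ in the unit ball of $\ell^1$. For $s>1$ all the frequency sums are geometric and the argument closes by elementary convolution inequalities.
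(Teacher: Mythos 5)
Your proposal is correct and takes essentially the same route as the paper: the paper itself does not reprove the commutator estimate, but cites Lemma 2.100 and Remark 2.101 of \cite{BCD} for the first bound and notes that \eqref{eq:lCommBLinfty} follows ``by slight modifications'' of that proof, specifically in the control of the term $R^3_j$ at pages 113--114 of \cite{BCD}. Your handling of $R^3_j$ --- writing $S_{m-1}\partial_k g = \partial_k S_{m-1}g$, using Bernstein to trade $\|\nabla g\|_{L^\infty}$ for a factor $2^m\|g\|_{L^\infty}$ with $m\sim j$, and absorbing that factor into $\|\nabla v\|_{B^{s-1}_{\infty,r}}$ via the weight $2^{j(s-1)}$ --- is precisely the modification the paper has in mind.
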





\addcontentsline{toc}{section}{References}
{\small

}


\begin{thebibliography}{xxx}


\bibitem{A-C-G-M} A. G. Abanov, T. Can, S. Ganesham, G. M. Monteiro:
{\it Hydrodynamics of two-dimensional compressible fluid with broken parity: variational principle and free surface dynamics in the absence of dissipation}.
Phys. Rev. Fluids, {\bf 5} (2020), n. 10, paper n. 104802.


\bibitem{Avron} J. E. Avron:
{\it Odd viscosity}.
J. Stat. Phys., {\bf 92} (1998), n. 3-4, 543-557.

\bibitem{avron1995viscosity}
J. E. Avron, R. Seiler, P. G. Zograf: {\it Viscosity of quantum Hall fluids}.
Phys. Rev. Lett., \textbf{75} (1995), n. 4, 697-700.

\bibitem{BCD} H. Bahouri, J.-Y. Chemin, R. Danchin:
\textit{``Fourier analysis and nonlinear partial differential equations''}.
Grundlehren der Mathematischen Wissenschaften (Fundamental Principles of Mathematical Scinences), Springer, Heidelberg, 2011.

\bibitem{Bony} J.-M. Bony: 
{\it Calcul symbolique et propagation des singularit\'es pour les \'equations aux d\'eriv\'ees partielles non lin\'eaires}.
Ann. Sci. \'Ecole Norm. Sup., {\bf 14} (1981), n. 2, 209-246.


\bibitem{Brenner_1} H. Brenner:
{\it Kinematics of volume transport}.
Phys. A, \tbf{349} (2005), 11-59.

\bibitem{Brenner_2} H. Brenner:
{\it Navier-Stokes revisited}.
Phys. A, \tbf{349} (2005), n. 1-2, 60-132.


\bibitem{B-D_2004} D. Bresch, B. Desjardins:
{\it Quelques mod\`eles diffusifs capillaires de type Korteweg}.
C. R. Acad. Sci. Paris, Sect. M\'ecanique, {\bf 332} (2004), n. 11, 881-886.

\bibitem{B-D} D. Bresch, B. Desjardins:
{\it On the existence of global weak solutions to the Navier-Stokes equations for viscous compressible and heat conducting fluids}.
J. Math. Pures Appl. (9), {\bf 87} (2007), n. 1, 57-90.

\bibitem{B-D-Lin} D. Bresch, B. Desjardins, C.-K. Lin:
{\it On some compressible fluid models: Korteweg, lubrication, and shallow water systems}.
Comm. Partial Differential Equations, {\bf 28} (2003), n. 3-4, 843-868.

\bibitem{B-D-Z} D. Bresch, B. Desjardins, E. Zatorska:
{\it Two-velocity hydrodynamics in fluid mechanics: Part II. Existence of global $\kappa$-entropy solutions to the compressible Navier-Stokes
systems with degenerate viscosities}.
J. Math. Pures Appl. (9), \tbf{104} (2015), n. 4, 801-836.

\bibitem{Cobb-F_2021} D. Cobb, F. Fanelli:
{\it On the fast rotation asymptotics of a non-homogeneous incompressible MHD system}.
Nonlinearity, {\bf 34} (2021), n. 4, 2483-2526.


\bibitem{Cobb-F} D. Cobb, F. Fanelli:
{\it Els\"asser formulation of the ideal MHD and improved lifespan in two space dimensions}.
J. Math. Pures Appl. (9), {\bf 169} (2023), 189-236.

\bibitem{D_2010} R. Danchin:
{\it On the well-posedness of the incompressible density-dependent Euler equations in the $L^p$ framework}.
J. Differential Equations, {\bf 248} (2010), n. 8. 2130-2170.

\bibitem{D_2013} R. Danchin:
{\it Remarks on the lifespan of the solutions to some models of incompressible fluid mechanics}.
Proc. Amer. Math. Soc., {\bf 141} (2013), n. 6. 1979-1993.

\bibitem{D-F} R. Danchin, F. Fanelli:
{\it The well-posedness issue for the density-dependent Euler equations in endpoint Besov spaces}.
J. Math. Pures Appl. (9), {\bf 96} (2011), n.3, 253-278.

\bibitem{F-GB-S} F. Fanelli, R. Granero-Belinch\'on, S. Scrobogna:
{\it Well-posedness theory for non-homogeneous incompressible fluids with odd viscosity}.
Submitted (2022), arxiv preprint available at \texttt{https://arxiv.org/abs/2211.15768}.

\bibitem{F-L} F. Fanelli, X. Liao:
{\it Analysis of an inviscid zero-Mach number system in endpoint Besov spaces for finite-energy initial data}.
J. Differential Equations, {\bf 259} (2015), n. 10, 5074-5114.

\bibitem{Feir-Vass} E. Feireisl, A. F. Vasseur:
{\it New perspectives in fluid dynamics: mathematical analysis of a model proposed by Howard Brenner}.
In ``New directions in fluid mechanics'', p. 153-179, Adv. Math. Fluid Mech., Birkh\"auser Verlag, Basel, 2010. 

\bibitem{ganeshan2017odd}
S. Ganeshan, A. G. Abanov:
{\it Odd viscosity in two-dimensional incompressible fluids}.
Phys. Rev. Fluids, \textbf{2} (2017), n. 9, 094101.

\bibitem{G-O} R. Granero-Belinch\'on, A. Ortega:
{\it On the motion of gravity-capillary waves with odd viscosity}.
J. Nonlinear Sci., \textbf{32} (2022), n. 3, paper n. 28.

\bibitem{Guer-Pop} J.-L. Guermond, B. Popov:
{\it Viscous regularization of the Euler equations and entropy principles}.
SIAM J. Appl. Math., \tbf{74} (2014), n. 2, 284-305.

\bibitem{HK} T. Hmidi,  S. Keraani:
{\it Incompressible viscous flows in borderline Besov spaces}.
Arch. Ration. Mech. Anal., {\bf 189} (2008), n. 2,  283-300.

\bibitem{K-S-F-V} T. Khain, C. Scheibner, M. Fruchart, V. Vitelli:
{\it Stokes flows in three-dimensional fluids with odd and parity-violating viscosities}.
J. Fluid Mech., {\bf 934} (2022), paper n. A23, 39 pages.

\bibitem{Land-Lif_Fluids} L. D. Landau, E. M. Lifschitz:
{\it ``Fluid Mechanics''}.
Pergamon Press, Oxford, 1980.

\bibitem{Mel-V} A. Mellet, A. F. Vasseur:
{\it On the barotropic compressible Navier-Stokes equations}.
Comm. Partial Differential Equations, {\bf 32} (2007), n. 1-3, 431-452.

\bibitem{V-Yu} A. F. Vasseur, C. Yu:
{\it Existence of global weak solutions for 3D degenerate compressible Navier-Stokes equations}.
Invent. Math., {\bf 206} (2016), n. 3, 935-974.

\bibitem{Vis} M. Vishik:
{\it Hydrodynamics in Besov spaces}.
Arch. Ration. Mech. Anal., {\bf 145} (1998), n. 3, 197-214.

\end{thebibliography}
\end{document}